\documentclass[11pt,reqno]{amsart}
\usepackage{tikz-cd}
\usepackage[T1]{fontenc}
\usepackage{amsmath,amssymb,amsthm,mathtools}
\usepackage[margin=1.05in]{geometry}
\usepackage{enumitem}
\usepackage[colorlinks=true,linkcolor=blue,citecolor=blue,urlcolor=blue]{hyperref}
\usepackage[all]{xy}
\DeclareMathOperator{\proj}{proj}
\theoremstyle{plain}
\newtheorem{theorem}{Theorem}[section]
\newtheorem{proposition}[theorem]{Proposition}
\newtheorem{lemma}[theorem]{Lemma}
\newtheorem{corollary}[theorem]{Corollary}

\theoremstyle{definition}
\newtheorem{definition}[theorem]{Definition}
\newtheorem{remark}[theorem]{Remark}
\newtheorem{example}[theorem]{Example}
\newtheorem{question}[theorem]{Question}
\newcommand{\K}{K}
\newcommand{\Z}{\mathbb{Z}}

\newcommand{\Gr}{\mathrm{Gr}}
\newcommand{\qgr}{\mathrm{qgr}}
\newcommand{\gr}{\mathrm{gr}}

\newcommand{\IBN}{\mathrm{IBN}}

\newcommand{\LE}{L_\K(E)}
\newcommand{\LRn}{L_\K(R_n)}

\DeclareMathOperator{\Tors}{Tors}

\begin{document}

\title{When twist of a Leavitt path algebra is again a Leavitt path algebra}

\author{Tran Giang Nam}
\address{Institute of Mathematics, Vietnam Academy of Science and Technology, 18 Hoang Quoc Viet, Hanoi, Vietnam}
\email{tgnam@math.ac.vn}

\author{Ashish K. Srivastava}
\address{Department of Mathematics and Statistics, Saint Louis University, St. Louis,
MO-63103, USA}
\email{ashish.srivastava@slu.edu}

\begin{abstract}
In this paper, we study Zhang twist of Leavitt path algebra $L_K(E)$ using a graded automorphism $\sigma$ induced by a graph automorphism such that the twisted algebra is a Leavitt path algebra over another graph $E_{\sigma}$ which is a twisted graph obtained from the original graph $E$. We establish combinatorial connection between the graphs $E$ and $E_{\sigma}$ and as a consequence, we show that many ring-theoretic properties are invariant for Leavitt path algebras under the twist by $\sigma$. We also define the notion of noncommutative projective scheme for $\mathbb Z$-graded algebras that coincides with the notion of noncommutative projective scheme for connected $\mathbb N$-graded algebras defined by Artin and Zhang and study it in the context of twists of Leavitt path algebras. 

\bigskip	
\textbf{Mathematics Subject Classifications 2020}: 14A22, 16D90, 16S88, 16W50,  05C25\medskip

\textbf{Key words}: Leavitt path algebra; Zhang twist;  noncommutative projective scheme.
\end{abstract}

\maketitle

\section{Introduction}\label{sec:intro}

\noindent Twisting the multiplication of a graded algebra is one of the standard tools for producing new algebras with prescribed homological behavior. A notion of twist of a graded algebra $A$ was introduced by Artin, Tate, and Van den Bergh in \cite{ATV} as a deformation of the original graded product of $A$ with the help of a graded automorphism of $A$. Let $\sigma$ be an automorphism of the graded algebra $A=\oplus A_n$. Define a new multiplication $\star$ on the underlying graded $K$-module $\oplus A_n$ by $a \star b=a\sigma^n(b)$ where $a$ and $b$ are homogeneous elements in $A=\oplus A_n$ and $deg(a)=n$. The new graded algebra with the same underlying graded $K$-module $\oplus A_n$ and the new graded product $\star$ is called the twist of $A$ and is denoted as $A^{\sigma}$. 

This notion of twist of a graded algebra was later generalized by Zhang in \cite{Zhang96}, where he introduced the concept of twisting of graded product with the help of a twisting system. Let $\tau=\{\tau_n\mid n \in \mathbb Z\}$ be a set of graded $K$-linear automorphisms of $A=\oplus A_n$. Then $\tau$ is called a {\it twisting system} if $\tau_n(y\tau_m(z))=\tau_n(y)\tau_{n+m}(z)$ for all $n, m, l\in \mathbb Z$ and $y\in A_m$, $z\in A_l$. For example, if $\sigma$ is a graded algebra automorphism of $A$, then $\tau=\{\sigma^n\mid n\in \mathbb Z\}$ is a twisting system. Thus, the twist of a graded algebra in the sense of Artin-Tate-Van den Bergh can be viewed as a special case of the twist introduced by Zhang. Such a twist of a graded algebra $A$ is now known as Zhang twist and denoted as $A^\tau$. 

Zhang twist of a graded algebra has played an important role in the interaction of noncommutative algebra with noncommutative algebraic geometry. The central idea behind the noncommutative projective scheme is to generalize the category of quasi-coherent sheaves to the noncommutative case. In the case of commutative algebras, Serre's theorem established that studying the category of quasi-coherent sheaves on a projective variety is essentially the same as studying the quotient category of graded modules. This led Artin and Zhang to define the notion of noncommutative projective space. 

Zhang proved the fundamental result that for any algebra $A$, the graded module categories $\Gr$-$A$ and $\Gr$-$A^\tau$ are equivalent. If the algebra $A$ is connected $\mathbb N$-graded and noetherian, then this equivalence restricts to the subcategories of finitely generated modules to give an equivalence $\gr$-$A \cong \gr$-$A^\tau$. Moreover, the subcategories of modules which are torsion also correspond, consequently, we have an equivalence between the quotient categories $\qgr$-$A$ and $\qgr$-$A^\tau$. As a consequence it follows that their noncommutative projective schemes $\proj$-$A$ and $\proj$-$A^\tau$ are equivalent. Since Zhang twist of a commutative graded algebra by a non-identity automorphism yields a noncommutative graded algebra, this gives us a tool to construct examples of noncommutative graded algebras whose noncommutative projective schemes are isomorphic to commutative projective schemes. It is known that many fundamental properties like Gelfand-Kirillov dimesnion and Artin-Schelter regularity are preserved under Zhang twist whereas some ring-theoretic properties such as being a prime ring or being a PI ring are not preserved under Zhang twist. The real motivation behind this paper is the following question: 

\begin{question} \label{q:IBN}
If a $\Z$-graded algebra $A$ has the $\IBN$ property, does its Zhang twist $A^\tau$ also have the $\IBN$ property?
\end{question}

Recall that a unital ring $R$ has the \emph{invariant basis number} property ($\IBN$) if $R^m\cong R^n$ as right $R$-modules implies $m=n$. If $R$ does not have the $\IBN$ property, then the pair $(m,n)$ where $m$ is minimal and, subject to this, $n$ is minimal, is called the \emph{module type} of $R$. Note that if $A$ is an $\mathbb{N}$-graded algebra, then it is not difficult to see that $A$ has $\IBN$ property if and only if its Zhang twist $A^\tau$ has $\IBN$ property. This follows from a simple observation that an $\mathbb N$-graded algebra $A$ has $\IBN$ property if and only if its degree-zero component $A_0$ has $\IBN$ property. However, for $\Z$-graded algebras, Question \ref{q:IBN} turns out to be surprisingly difficult. The search for counter-examples naturally leads us to the world of Leavitt path algebras. Leavitt \cite{Leavitt62} produced, for each $n\ge2$, a universal example of a non-$\IBN$ algebra $L_K(1, n)$ of module type $(1,n)$ in the sense that $L_K(1,n)\cong L_K(1, n)^n$ as right $L_K(1, n)$-modules. This algebra is the Leavitt path algebra $\LRn$ of the rose graph with $n$ petals. This suggests that one possible way to come up with a counter-example would be to start with a Leavitt path algebra $L_K(E)$ having the $\IBN$ property and then twist with a graded automorphism in such a way that the resulting twisted algebra is a Leavitt path algebra without the $\IBN$ property. 
We note that criteria for the Leavitt path algebra of a finite graph to have the $\IBN$ property have  been established in \cite{KO:IBN, NP}. This leads naturally to the following question:

\begin{question}
When can the Zhang twist of a Leavitt path algebra again be realized as a Leavitt path algebra?
\end{question} 

In \cite{NSV}, a construction of special graded automorphisms $\varphi$ which fix vertices was given for any Leavitt path algebra $L_K(E)$ and it was shown that for any graph $E$ and any such a graded automorphism $\varphi$, $L_K(E)$ embeds in the twisted algebra $L_K(E)^{\varphi}$. In this paper, we show, quite surprisingly, that this remains true for every graded automorphism of $L_K(E)$ that fixes the vertices, and we construct a family of graded automorphisms that do not fix the vertices. More precisely, we show that if $\sigma$ is a graph automorphism of $E$, then it induces a natural graded automorphism of the Leavitt path algebra $L_K(E)$, which we also denote by $\sigma$ and the Zhang twist of $L_K(E)$ by this graded automorphism $\sigma$ is isomorphic to the Leavitt path algebra $L_K(E_{\sigma})$ associated to  another graph $E_{\sigma}$, which is obtained from the original graph $E$ by twisting its edges according to $\sigma$. It turns out that there is a strong combinatorial connection between the graphs $E$ and $E_{\sigma}$. As a consequence, we show that many ring-theoretic properties for unital Leavitt path algebras are invariant under twisting by such a graded automorphism $\sigma$. In particular, we show that the properties of being a von Neumann regular ring, exchange ring, Dedekind finite, stably finite and strongly graded are invariant under twisting by $\sigma$ for unital Leavitt path algebras. We also construct an example in which $L_K(E)$ is noncommutative, whereas the twisted algebra $L_K(E)^{\sigma}$ is a commutative Leavitt path algebra. However, this example does not provide us with an answer to Question \ref{q:IBN}, since both Leavitt path algebras in the example have the $\IBN$ property. Nevertheless, we hope that this study will contribute to a better understanding of Question \ref{q:IBN} and, ultimately, help lead to an answer.   

As remarked earlier, in the case of a connected $\mathbb N$-graded algebra $A$, the noncommutative projective schemes $\proj$-$A$ and $\proj$-$A^\tau$ are equivalent. To study $\proj$ in the context of Leavitt path algebras, we first extend the notion of a noncommutative projective scheme to $\mathbb Z$-graded algebras. For a $\mathbb Z$-graded algebra $A=\bigoplus_{n\in\Z}A_n$, we define the noncommutative projective scheme of $A$ as the triple $\proj A =\; \big(\mathsf{QGr}_I\text{-}A,\ \pi(A_A),\ s\big)$, where $\mathsf{QGr}_I\text{-}A =\; \Gr\text{-}A\big/\Tors_I(A)$, with $\Tors_{I}(A)$ denoting the smallest localizing subcategory of $\Gr\text{-}A$ containing every graded module $M$ such that $MI_A=0$, where $I_A =\; A\Big(\bigoplus_{n\neq 0}A_n\Big)A$ and $s$ is the autoequivalence induced by the degree shift $M\mapsto M(1)$. Our notion of a noncommutative projective scheme for a $\mathbb Z$-graded algebra coincides with the notion of a noncommutative projective scheme for a connected $\mathbb N$-graded algebra defined by Artin and Zhang \cite{AZ}. We then study $\proj$ in the context of twists of Leavitt path algebras. For the graph $E=C_m$ with vertices $v_1,\dots,v_m$ and edges $e_i\colon v_i\to v_{i+1}$, where the indices are taken modulo $m$, $\sigma$ being the rotation $v_i\mapsto v_{i+1}$, $e_i\mapsto e_{i+1}$, we show that $\proj \LE$ and $\proj \LE^\sigma$ have the same underlying pair $(\mathcal C,\mathcal O)$, but they differ in the shift functor (Example \ref{ex:Cm}).

The article is organized as follows. In Section \ref{LPAand Twist}, we first show that the Leavitt path algebra $L_K(E)$ of an arbitrary graph $E$ over a field $K$ embeds into the twisted $K$-algebra $L_K(E)^{\varphi}$, where $\varphi$ is a graded automorphism of $L_K(E)$ that fixes every vertex (Proposition \ref{prop:embed}). We then construct a family of graded automorphisms $\sigma$ of $L_K(E)$ induced by graph automorphisms of $E$ that need not fix the vertices (Lemma \ref{lem:graphaut}); we use the same notation $\sigma$ for these induced automorphisms. We next show that the Zhang twist of $L_K(E)$ by a graded automorphism induced by an automorphism $\sigma$ of $E$ is isomorphic to the Leavitt path algebra $L_K(E_{\sigma})$ of a graph $E_{\sigma}$ obtained from $E$ by twisting its edges according to $\sigma$ (Theorem \ref{thm:twist-is-LPA}). Finally, we give an example in which $L_K(E)$ is noncommutative, whereas the twisted algebra $L_K(E)^{\sigma}$ is commutative (Example \ref{ex:Cm}). 

In Section \ref{noncom-Proj}, we introduce the noncommutative projective scheme associated to a unital $\mathbb{Z}$-graded algebra $A$, extending the notion of Artin and Zhang \cite{AZ} for connected $\mathbb{N}$-graded algebras (Definition \ref{def:proj} and Proposition \ref{prop:AZ-recovered}). We then describe the noncommutative projective scheme associated to a strongly $\mathbb{Z}$-graded algebra (Proposition \ref{prop:strong}) and show that this notion of $\proj$ for $\mathbb{Z}$-graded algebras is well suited to Zhang twists by graded automorphisms (Theorem \ref{thm:proj-twist} and Proposition \ref{prop:degree-one-unit}). As a consequence, we describe the noncommutative projective scheme associated to the Zhang twist of the Leavitt path algebra $L_K(E)$ of a finite graph without sinks by a graded automorphism induced by an automorphism $\sigma$ of $E$ (Corollary \ref{cor:proj-LPA-twist}). Finally, by combining these results with the work of Smith \cite{S}, we obtain a categorical equivalence between $\mathsf{Gr}\text{-}L_K(E)$ and $\mathsf{Gr}\text{-}L_K(E)^{\sigma}$ (Corollary \ref{cor:smith-dictionary}), without appealing to Zhang's theorem \cite{Zhang96}.

In Section \ref{Invariant}, we establish combinatorial connection between the graphs $E$ and $E_{\sigma}$ (Corollary \ref{cor:acyclic}). As a consequence, we show that many ring-theoretic properties are invariant for unital Leavitt path algebras under the twist by $\sigma$, including strong gradednes, von Neumann regularity, Dedekind finiteness, stable finiteness, the exchange property, the UGN property, and the gr-IBN property (Propositions \ref{prop:strongly-graded-preserved}, \ref{prop:UGN} and \ref{prop:gr-IBN}).

\section{Leavitt path algebras and their twists}\label{LPAand Twist}

A (directed) graph $E = (E^0, E^1, s, r)$ consists of two disjoint sets $E^0$ and $E^1$, called \emph{vertices} and \emph{edges} respectively, together with two maps $s, r: E^1 \longrightarrow E^0$.  The vertices $s(e)$ and $r(e)$ are referred to as the \emph{source} and the \emph{range} of the edge~$e$, respectively.  A graph $E$ is called {\it row-finite} if $s^{-1}(v)$ is finite for all $v\in E^0$. It is called {\it finite} if both $E^0$ and $E^1$ are finite.

A {\em sink} in a graph $E$ is a vertex $v \in E^0$ with $s^{-1}(v) = \emptyset$; a {\em source} is a vertex $v \in E^0$ with $r^{-1} (v) =\emptyset$. A {\it regular vertex} in a graph $E$ is a vertex $v$ with $0 < |s^{-1}(v)| < \infty$.

A (finite) \emph{path} in a graph $E$ is a string
$p=e_1\cdots e_n$ of edges $e_i\in E^1$ such that $r(e_i) = s(e_{i+1})$ for all $i$. The \emph{length} of the path $p = e_1 \cdots e_n$ is $n$, and is denoted by $|p|$.  
The source and range maps on edges are extended to paths as
\[s(p)=s(e_1)\qquad\text{and}\qquad r(p)=r(e_n).\]
Vertices are  regarded as paths of length $0$, with each vertex coinciding with its source and its range. We denote by $\text{Path}(E)$ the set of all paths in $E$.

An edge $f$ is called an {\it exit for a path $p = e_1\cdots e_n$} if there exists $i$ ($1\le i\le n$) such that $s(e_i) = s(f)$ and $e_i\neq f$.
A path $p$ of positive length is called a {\it closed path based at} $v$ if $s(p) = r(p) = v$. A {\it closed simple path based at $v$} is a closed path $p=e_1 \cdots e_n$ based at $v$ such that $v \neq s(e_i)$ for all $1 < i \le n$. A {\it cycle based at $v$} is a closed path $p=e_1 \cdots e_n$ based at $v$ and $s(e_i)\neq s(e_j)$ for all $i\neq j$. A graph is called {\it acyclic} if  it does not have any cycles.

The Leavitt path algebra of a graph with coefficients
in a field  was introduced by Abrams and Aranda Pino in \cite{AP05}, and independently by Ara, Moreno and Pardo in \cite{AMP}. Leavitt path algebras are algebraic analogues of graph $C^*$-algebras and provide natural generalizations of the Leavitt algebras of type $(1,n)$ introduced by William Leavitt in \cite{Leavitt62}.

\begin{definition}[{\cite[Definition 1.2.3]{AAS}}]\label{LPAs}
For a graph $E = (E^0,E^1,s,r)$ and any  field $K$, the \emph{Leavitt path algebra} $L_{K}(E)$ {\it of the graph}~$E$
\emph{with coefficients in}~$K$ is the $K$-algebra generated
by the union of the set $E^0$  and two disjoint copies $E^1$, say $E^1$ and $\{e^*\mid e\in E^1\}$, satisfying the following relations for all $v, w\in E^0$ and $e, f\in E^1$:
	\begin{itemize}
		\item[(1)] $v w = \delta_{v, w} w$;
		\item[(2)] $s(e) e = e = e r(e)$ and $e^*s(e) = e^* = r(e)e^*$;
		\item[(3)] $e^* f = \delta_{e, f} r(e)$;
		\item[(4)] $v= \sum_{e\in s^{-1}(v)}ee^*$ for any  regular vertex $v$;
	\end{itemize}
	where $\delta$ is the Kronecker delta.
\end{definition}

The Leavitt path algebra $\LE$ has the following universal property: if $A$ is any $K$-algebra generated by a family of elements $\{a_v, b_e, c_{e^{\ast}}\mid v\in E^0, e\in E^1\}$ satisfying the relations analogous to (1) - (4) in Definition \ref{LPAs}, then there exists a unique $K$-algebra homomorphism $\varphi: \LE \longrightarrow A$ given by 
\begin{center}
$\varphi(v)=a_v$, $\varphi(e)=b_e$, and $\varphi(e^{\ast})=c_{e^{\ast}}$. 
\end{center}
It can be shown that $\LE$ is spanned as a $K$-vector space by $\{pq^{\ast}\mid p, q \in \text{Path}(E), r(p)=q(y)\}$ where if $q = v \in E^0$, we set $q^*=v$, and if  $q=e_1\cdots e_m$, with $e_i\in E^1$, then $q^*=e_m^*\cdots e_1^*$. 
Every Leavitt path algebra $\LE$ is a $\mathbb{Z} $\textit{-graded algebra}, namely, $\LE={\displaystyle\bigoplus\limits_{n\in\mathbb{Z}}}
L_K(E)_{n}$ induced by defining, for all $v\in E^{0}$ and $e\in E^{1}$, $\deg
(v)=0$, $\deg(e)=1$, $\deg(e^{\ast})=-1$. For each $n\in\mathbb{Z}$, the \textit{homogeneous component }$L_K(E)_{n}$
is given by
\[
L_K(E)_{n}=\text{span}_K\{pq^{\ast}\mid p, q \in \text{Path}(E),
|p|-|q|=n\}.
\]

Let $K$ be a field, $E$ a graph, and $\phi$ a graded algebra automorphism of $L_K(E)$. We then have that $\tau=\{\varphi^m\}_{m\in\Z}$ is a normalized twisting system on $L_K(E)$, and the twist $L_K(E)^\varphi$ carries
\[a\star b=a\,\varphi^m(b)\] for all $a\in L_K(E)_m$ and $b\in L_K(E)$. In particular, if $L_K(E)$ is unital with identity $1$, then $L_K(E)^\varphi$ is also unital with identity $1$, since 
\begin{center}
$1\star b=b$ and $a\star1=a\varphi^m(1)=a$.    
\end{center}

A family of graded automorphisms of $L_K(E)$ fixing all vertices was introduced in \cite[Theorem 2.2]{KN2023} and \cite[Theorem 2.2 and Corollary 2.3]{NSV}. For such a graded automorphim $\varphi$, \cite[Proposition 3.2]{NSV} showed that $L_K(E)$ can be embedded into $L_K(E)^{\varphi}$. The following proposition extends this result to arbitrary graded automorphisms fixing all vertices.

\begin{proposition}\label{prop:embed}
Let $E$ be a graph, $K$ a field, and $\varphi$ a graded automorphism of $L_K(E)$ fixing all vertices. Then there exists a graded injective homomorphism $\Psi\colon \LE\longrightarrow \LE^{\varphi}$ of $K$-algebras satisfying
\[
\Psi(v)=v\quad(v\in E^0),\qquad \Psi(e)=e,\qquad \Psi(e^*)=\varphi^{-1}(e^*)\quad(e\in E^1).
\] If, in addition, $E^0$ is finite, then $\Psi$ is a unital $K$-algebra homomorphism.
\end{proposition}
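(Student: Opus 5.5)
The plan is to build $\Psi$ from the universal property of $\LE$ and then prove injectivity with the Graded Uniqueness Theorem for Leavitt path algebras. Inside $\LE^{\varphi}$ I take the family $a_v=v$, $b_e=e$, $c_{e^*}=\varphi^{-1}(e^*)$. These elements are homogeneous of degrees $0$, $1$ and $-1$ respectively, because $\varphi$ (and hence $\varphi^{-1}$) is graded. The main task is to check relations (1)--(4) of Definition \ref{LPAs} for the product $\star$. Each check uses the rule $a\star b=a\,\varphi^m(b)$ for $a$ of degree $m$, together with $\varphi(v)=v$ for every vertex.

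The verifications go as follows.
\begin{itemize}
\item[(1)] Since $v$ has degree $0$, we get $v\star w=vw=\delta_{v,w}w$.
\item[(2)] First, $s(e)\star e=s(e)e=e$, and $e\star r(e)=e\,\varphi(r(e))=e\,r(e)=e$. Next, $r(e)\star\varphi^{-1}(e^*)=r(e)\varphi^{-1}(e^*)=\varphi^{-1}(r(e)e^*)=\varphi^{-1}(e^*)$. Finally, $\varphi^{-1}(e^*)\star s(e)=\varphi^{-1}(e^*)\varphi^{-1}(s(e))=\varphi^{-1}(e^*s(e))=\varphi^{-1}(e^*)$.
\item[(3)] Since $\varphi^{-1}(e^*)$ has degree $-1$, we get $\varphi^{-1}(e^*)\star f=\varphi^{-1}(e^*)\varphi^{-1}(f)=\varphi^{-1}(e^*f)=\delta_{e,f}\varphi^{-1}(r(e))=\delta_{e,f}r(e)$.
\item[(4)] For a regular vertex $v$, we get $\sum_{e\in s^{-1}(v)}e\star\varphi^{-1}(e^*)=\sum_{e\in s^{-1}(v)} e\,\varphi(\varphi^{-1}(e^*))=\sum_{e\in s^{-1}(v)} ee^*=v$.
\end{itemize}
The one point to watch is exactly this choice $\Psi(e^*)=\varphi^{-1}(e^*)$. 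The twist applies $\varphi^{\pm1}$ to the right-hand factor depending on the degree of the left-hand factor, so the choice is what makes (3) and (4) close up. That $\varphi$ fixes vertices is what makes $r(e)$, $s(e)$ and the right-hand sides come out unchanged.

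The universal property now gives a $K$-algebra homomorphism $\Psi\colon\LE\to\LE^{\varphi}$ with the prescribed values. Note that $\LE^{\varphi}$ carries the same $\Z$-grading as $\LE$, and $\star$ respects it. Since $\Psi$ sends each generator to a homogeneous element of the same degree, $\Psi$ is graded.

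For injectivity I invoke the Graded Uniqueness Theorem: a graded ring homomorphism out of $\LE$ that is nonzero on every vertex is injective. Here $\Psi(v)=v\neq0$ in $\LE^{\varphi}$, since the underlying vector space is unchanged. The theorem is usually stated for maps into a graded ring, and $\LE^{\varphi}$ is a $\Z$-graded $K$-algebra, so it applies directly.

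Finally, suppose $E^0$ is finite. Then $1=\sum_{v\in E^0}v$ is the identity of $\LE$, and, as observed before the statement, it is also the identity of $\LE^{\varphi}$. Because $\Psi$ is additive and fixes vertices, $\Psi(1)=\sum_{v\in E^0}v=1$, so $\Psi$ is unital. I do not expect any step to be a real obstacle: the relation checks are routine once $\Psi(e^*)$ is chosen as above, and injectivity is immediate from graded uniqueness.
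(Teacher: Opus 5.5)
Your proposal is correct and follows essentially the same route as the paper. The paper also takes the family $v$, $e$, $\varphi^{-1}(e^*)$ in $\LE^{\varphi}$ and checks relations (1)--(4) through the twisted product, using that $\varphi^{\pm1}$ fixes vertices. It then applies the universal property, gets injectivity from the Graded Uniqueness Theorem, and obtains unitality from $1=\sum_{v\in E^0}v$.
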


\begin{proof}
We first note that $\varphi^{-1}$ is a graded $K$-algebra automorphism of $L_K(E)$ fixing all vertices, and so $\varphi^{-1}(v) =v$ for all $v\in E^0$ and $\deg(\varphi^{-1}(e^*))=-1$ for all $e\in E^1$.

We define the elements $\{Q_v\mid v\in E^0\}$ and $\{T_e, T_{e^*}\mid e\in E^1\}$ of $L_K(E)^{\varphi}$ by setting 
\begin{center}
$Q_v = v$,\quad $T_e = e$,\quad and\quad  $T_{e^*} = \varphi^{-1}(e^*)$.    
\end{center}
We obtain that 
\begin{center}
$\deg(Q_v) = 0$,\quad $\deg(T_e) = 1$,\quad and\quad $\deg(T_{e^*}) = -1$    
\end{center}
for all $v\in E^0$ and $e\in E^1$.

We claim that $\{Q_v, T_e, T_{e^*}\mid v\in E^0, e\in E^1\}$ is a family in $L_K(E)^{\varphi}$ satisfying the relations analogous to (1) - (4) in Definition \ref{LPAs}. In deed, we have 
$$Q_v\star Q_w= v\star w = v\varphi^0(w)=vw=\delta_{v,w}w = \delta_{v,w}Q_w,$$
for all $v, w\in E^0$, showing relation $(1)$.

For $(2)$, we have
 $$Q_{s(e)}\star T_e=s(e)\star e= s(e)\varphi^0(e) = s(e)e = e = T_e,$$ 
 
 \[T_e\star Q_{r(e)}=e\star r(e) =e\varphi(r(e))=er(e) = e = T_e,\]

\[T_{e^*}\star T_{s(e)}=\varphi^{-1}(e^*)\,\varphi^{-1}\bigl(s(e)\bigr)=\varphi^{-1}\bigl(e^*s(e)\bigr)=\varphi^{-1}(e^*)= T_{e^*},
\]
and

\[Q_{r(e)}\star T_{e^*}=r(e)\,\varphi^{-1}(e^*)= \varphi^{-1}(r(e))\,\varphi^{-1}(e^*)=\varphi^{-1}\bigl(r(e)e^*\bigr)=\varphi^{-1}(e^*)=T_{e^*}
\] for all $e\in E^1$.

For $(3)$, we obtain that
\[T_{e^*}\star T_f=  \varphi^{-1}(e^*)\,\varphi^{-1}(f)=\varphi^{-1}(e^*f)=\varphi^{-1}\bigl(\delta_{e,f}r(e)\bigr)=\delta_{e,f}\,r(e) = \delta_{e,f}Q_{r(e)}\] for all $e, f\in E^1$.

For $(4)$, let $v$ be a regular vertex in $E$. We then have
\[\sum_{e\in s^{-1}(v)} T_e\star T_{e^*}=\sum_{e\in s^{-1}(v)}e\,\varphi\bigl(\varphi^{-1}(e^*)\bigr)=\sum_{e\in s^{-1}(v)}e\,e^*=v= Q_v,
\] thus showing the claim. Then, by the Universal Property of $L_K(E)$, there is a unique $\K$-algebra homomorphism $\Psi\colon \LE\longrightarrow \LE^{\varphi}$ such that 
\begin{center}
$\Psi(v) = Q_v$,\quad $\Psi(e)= T_e$,\quad and\quad $\Psi(e^*) = T_{e^*}$    
\end{center}
for all $v\in E^0$ and $e\in E^1$. Since 
$\deg(Q_v) = 0$, $\deg(T_e) = 1$, and $\deg(T_{e^*}) = -1$    
for all $v\in E^0$ and $e\in E^1$, it follows that $\Psi$ is a $\mathbb{Z}$-graded homomorphism, and therefore the injectivity of $\Psi$ is guaranteed by the Graded Uniqueness Theorem (see, e.g., \cite[Theorem 2.2.15]{AAS}).

In particular, if $E^0$ is finite, then $1_{L_K(E)} = \sum_{v\in E^0}v$ (see, e.g., \cite[Lemma 1.2.12(iv)]{AAS}), and so $$\Psi(1_{L_K(E)})=\sum_{v\in E^0}v=1_{L_K(E)^{\varphi}},$$ showing that $\Psi$ is a unital $K$-algebra homomorphism, thus finishing the proof.
\end{proof}


In \cite[Section 3]{NSV}, the authors and Vien provided  special examples in which the homomorphism $\Psi$, introduced in Proposition \ref{prop:embed},  is an isomorphism. This naturally raises the question of when the twist  $L_K(E)^{\varphi}$ of a Leavitt path algebra $L_K(E)$ by a graded automorphism $\varphi$ is itself a Leavitt path algebra.
The next part of this section provides a partial answer to this question by considering a family of graded automorphisms of  $L_K(E)$ arising from automorphisms of $E$.

For the reader's convenience, we recall the notion of graph automorphisms.

\begin{definition}
For every graph $E$, an \emph{automorphism of} $E$ is a pair $\sigma=(\sigma_0,\sigma_1)$ of bijections $\sigma_0\colon E^0\to E^0$ and $\sigma_1\colon E^1\to E^1$ with
\[
s\circ\sigma_1=\sigma_0\circ s,\qquad r\circ\sigma_1=\sigma_0\circ r .
\]
\end{definition}

The following lemma gives graded automorphisms of $L_K(E)$ arising from automorphisms of $E$.

\begin{lemma}\label{lem:graphaut}
Let $K$ be a field, $E$ a graph, and $\sigma=(\sigma_0,\sigma_1)$ an automorphism of $E$. Then
\[
\sigma(v)=\sigma_0(v)\quad (v\in E^0) ,\qquad \sigma(e)=\sigma_1(e),\qquad \sigma(e^*)=\sigma_1(e)^*\quad (e\in E^1)
\]
extends to a graded $\K$-algebra automorphism of $\LE$. In particular, if $E^0$ is finite, then the automorphism is unital.
\end{lemma}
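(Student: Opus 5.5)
The plan is to use the Universal Property of $\LE$, exactly as in the proof of Proposition \ref{prop:embed}. In $\LE$ consider the family
\[
a_v=\sigma_0(v),\qquad b_e=\sigma_1(e),\qquad c_{e^*}=\sigma_1(e)^*\qquad (v\in E^0,\ e\in E^1).
\]
I will check that this family satisfies relations (1)--(4) of Definition \ref{LPAs}.

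\begin{enumerate}[label=(\roman*)]
\item Relation (1) holds because $\sigma_0$ is injective: $\sigma_0(v)\sigma_0(w)=\delta_{\sigma_0(v),\sigma_0(w)}\sigma_0(w)=\delta_{v,w}\sigma_0(w)$.
\item Relation (2) follows from the compatibility conditions $s\circ\sigma_1=\sigma_0\circ s$ and $r\circ\sigma_1=\sigma_0\circ r$. For example, $a_{s(e)}b_e=s(\sigma_1(e))\sigma_1(e)=\sigma_1(e)$, and the other three identities are similar.
\item Relation (3) holds because $\sigma_1$ is injective: $\sigma_1(e)^*\sigma_1(f)=\delta_{\sigma_1(e),\sigma_1(f)}r(\sigma_1(e))=\delta_{e,f}\,\sigma_0(r(e))$.
\item Relation (4) is the only point needing care. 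Since $\sigma$ commutes with $s$ and $\sigma_1$ is a bijection, $\sigma_1$ restricts to a bijection $s^{-1}(v)\to s^{-1}(\sigma_0(v))$. Hence $v$ is regular if and only if $\sigma_0(v)$ is regular. For regular $v$, reindexing the sum gives
\[
\sum_{e\in s^{-1}(v)}\sigma_1(e)\sigma_1(e)^*=\sum_{f\in s^{-1}(\sigma_0(v))}ff^*=\sigma_0(v).
\]
\end{enumerate}

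The Universal Property then gives a $K$-algebra homomorphism $\sigma\colon\LE\to\LE$ with the prescribed values on generators. It preserves the $\Z$-grading, since generators of degree $0$, $1$ and $-1$ are sent to elements of the same degree.

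To show bijectivity, note that $\sigma^{-1}=(\sigma_0^{-1},\sigma_1^{-1})$ is again a graph automorphism of $E$. Indeed, $s\circ\sigma_1^{-1}=\sigma_0^{-1}\circ s$ follows by applying $\sigma_0^{-1}$ on the left and $\sigma_1^{-1}$ on the right of $s\circ\sigma_1=\sigma_0\circ s$, and similarly for $r$. So the same construction yields a graded homomorphism $\tau\colon\LE\to\LE$. The composites $\tau\sigma$ and $\sigma\tau$ agree with the identity on all generators $v$, $e$, $e^*$, and hence are the identity. Thus $\sigma$ is a graded automorphism.

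Alternatively, injectivity could be obtained from the Graded Uniqueness Theorem, since $\sigma(v)=\sigma_0(v)\neq0$; surjectivity holds because every generator lies in the image. The inverse construction is cleaner, so I would use it.

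For the unital statement, assume $E^0$ is finite. Then $1=\sum_{v\in E^0}v$, so
\[
\sigma(1)=\sum_{v\in E^0}\sigma_0(v)=\sum_{w\in E^0}w=1,
\]
because $\sigma_0$ is a permutation of $E^0$.

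The main obstacle is relation (4), specifically ensuring that regularity is transported by $\sigma_0$ so that the Cuntz--Krieger relation is imposed at exactly the right vertices. The bijection $s^{-1}(v)\cong s^{-1}(\sigma_0(v))$ settles this.
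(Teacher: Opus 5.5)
Your proposal is correct and follows essentially the same route as the paper: you check relations (1)--(4) for the family $\sigma_0(v),\ \sigma_1(e),\ \sigma_1(e)^*$ (using the bijection $s^{-1}(v)\to s^{-1}(\sigma_0(v))$ for (4)), apply the universal property, obtain the inverse from $(\sigma_0^{-1},\sigma_1^{-1})$, and verify unitality via $1=\sum_{v\in E^0}v$. Your explicit check that $(\sigma_0^{-1},\sigma_1^{-1})$ is again a graph automorphism fills in a step the paper leaves implicit.
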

\begin{proof}
We define the elements $\{Q_v\mid v\in E^0\}$ and $\{T_e, T_{e^*}\mid e\in E^1\}$ of $L_K(E)$ by setting 
\begin{center}
$Q_v = \sigma_0(v)$,\quad $T_e = \sigma_1(e)$,\quad and\quad $T_{e^*} = \sigma_1(e)^*$.    
\end{center}
We claim that $\{Q_v, T_e, T_{e^*}\mid v\in E^0, e\in E^1\}$ is a family in $L_K(E)$ satisfying the relations analogous to (1) - (4) in Definition \ref{LPAs}. In deed, for all $v, w\in E^0$, we have 
$$Q_vQ_w=\sigma_0(v)\sigma_0(w)=\delta_{\sigma_0(v),\sigma_0(w)}\sigma_0(w)=\delta_{v,w}\sigma_0(w) = \delta_{v,w} Q_w,$$ where the third equality follows from the injectivity of $\sigma_0$. Therefore, relation $(1)$ holds. 

For $(2)$, for each $e\in E^1$, we have 
$$Q_{s(e)}T_e=\sigma_0(s(e))\sigma_1(e)=s(\sigma_1(e))\sigma_1(e)=\sigma_1(e)= T_e,$$
$$T_eQ_{r(e)} = \sigma_1(e)\sigma_0(r(e))=  \sigma_1(e)r(\sigma_1(e))=\sigma_1(e)= T_e,$$
$$Q_{r(e)}T_{e^*} = \sigma_0(r(e)) \sigma_1(e)^* = r(\sigma_1(e)) \sigma_1(e)^* = \sigma_1(e)^* = T_{e^*},$$
and 
$$T_{e^*}Q_{s(e)} = \sigma_1(e)^* \sigma_0(s(e)) = \sigma_1(e)^* s(\sigma_1(e)) = \sigma_1(e)^* = T_{e^*}.$$

For $(3)$, we obtain that
\[T_{e^*} T_f =\sigma_1(e)^*\sigma_1(f)=\delta_{\sigma_1(e),\sigma_1(f)}\,r(\sigma_1(e))=\delta_{e,f}\,\sigma_0(r(e))=\delta_{e,f} Q_{r(e)}
\] for all $e, f\in E^1$, where the third equality follows from the injectivity of $\sigma_1$. 

For $(4)$, let $v$ be a regular vertex in $E$. We note that $\sigma_1$ restricts to a bijection $s^{-1}(v)\to s^{-1}(\sigma_0(v))$, and so $\sigma_0(v)$ is also a regular vertex in $E$. We then have
\[
\sum_{e\in s^{-1}(v)} T_e T_{e^*}=\sum_{e\in s^{-1}(v)}\sigma_1(e)\sigma_1(e)^*=\sum_{f\in s^{-1}(\sigma_0(v))}ff^*=\sigma_0(v)= Q_v,
\]
thus showing the claim. By the Universal Property of $L_K(E)$,
there is a unique $\K$-algebra endomorphism of $L_K(E)$, also denoted by $\sigma$, such that 
\begin{center}
$\sigma(v) = Q_v$,\quad $\sigma(e)= T_e$,\quad and\quad $\sigma(e^*) = T_{e^*}$    
\end{center}
for all $v\in E^0$ and $e\in E^1$. Since 
$\deg(Q_v) = 0$, $\deg(T_e) = 1$, and $\deg(T_{e^*}) = -1$    
for all $v\in E^0$ and $e\in E^1$, it follows that $\sigma$ is a $\mathbb{Z}$-graded homomorphism.

Similarly, the graded $K$-algebra endomorphism of $L_K(E)$ induced by $\sigma^{-1}=(\sigma_0^{-1},\sigma_1^{-1})$ is a two-sided inverse of $\sigma$.

In particular, if $E^0$ is finite, then $1_{L_K(E)} = \sum_{v\in E^0}v$ (by \cite[Lemma 1.2.12(iv)]{AAS}), and hence
$$\sigma(1_{L_K(E)}) = \sigma(\sum_{v\in E^0}v) =\sum_{v\in E^0}\sigma_0(v) = \sum_{v\in E^0}v = 1_{L_K(E)},$$
thus finishing the proof.
\end{proof}

For clarification, we illustrate Lemma \ref{lem:graphaut}  by presenting the following example.

\begin{example} \label{C_m} \rm
Let $m$ be a positive integer, and let $C_m$ be the graph having vertices $v_1,\dots,v_m$ and edges $e_i\colon v_i\to v_{i+1}$, with indices taken modulo $m$. The rotation 
\begin{center}
$v_i\mapsto v_{i+1}$\quad and\quad $e_i\mapsto e_{i+1}$     
\end{center}
is a graph automorphism of order $m$, giving a graded automorphism of $L_\K(C_m)\cong M_m\bigl(\K[x,x^{-1}]\bigr)$ that cyclically permutes all vertices.
\end{example}

The twist by such an automorphism is again a Leavitt path algebra --- of a different graph which is a twisted graph constructed from the original graph.

\begin{definition} \label{def:twisted-quiver}
Let $E$ be a graph and $\sigma$ an automorphism of $E$. Define
\[
E_{\sigma}:=\bigl(E_0,\ E_1,\ s,\ r_{\sigma}\bigr),\qquad r_{\sigma}:=\sigma_0^{-1}\circ r=r\circ\sigma_1^{-1},
\]
where the two descriptions agree because $r\circ\sigma_1=\sigma_0\circ r$. Thus $E_{\sigma}$ has the same vertices and edges as $E$ and the same source map, while its range map is twisted by $\sigma$.
\end{definition}

For clarification, we illustrate Definition \ref{def:twisted-quiver}  by presenting the following example.

\begin{example}\label{ex:twist-quiver}
Consider the graph $E=C_2$: \[\begin{tikzcd}
	{\bullet v_1} && {\bullet v_2}
	\arrow["{e_1}", from=1-1, to=1-3]
	\arrow["{e_2}", shift left=3, from=1-3, to=1-1]
\end{tikzcd}\]
and let $\sigma$ be an automorphism of $E$ defined by 
\begin{center}
$\sigma(v_1)=v_2,\ \sigma(v_2)=v_1,\ \sigma(e_1)=e_2, \text{ and } \sigma(e_2)=e_1$.     
\end{center}
We then have 
\begin{center}
$r_{\sigma}(e_1)=\sigma_0^{-1}(v_2)=v_1=s(e_1)$ and $r_{\sigma}(e_2)=\sigma_0^{-1}(v_1)=v_2=s(e_2)$,     
\end{center}
and so $E_{\sigma} =R_1\sqcup R_1$:
\[\begin{tikzcd}
	{\bullet v_1} && {\bullet v_2}
	\arrow["{e_1}", from=1-1, to=1-1, loop, in=55, out=125, distance=10mm]
	\arrow["{e_2}", from=1-3, to=1-3, loop, in=55, out=125, distance=10mm]
\end{tikzcd}\]    
\end{example}

\begin{lemma}\label{lm:twsist-path}
Let $E$ be a graph, $\sigma$ an automorphism of $E$, and $k$ a positive integer. Then $e_1e_2\cdots e_k$ is a path in $E_{\sigma}$ if and only if $f_1f_2\cdots f_k$ is a path in $E$, where $f_i = \sigma_1^{i-1}(e_i)$ for all $1\le i\le k$.
\end{lemma}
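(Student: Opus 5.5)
The plan is to reduce both path conditions to the same family of equalities between vertices, one for each consecutive pair of edges. Then we compare them index by index using the defining identities of a graph automorphism.

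First, I record that $\sigma_1$ preserves sources and ranges in the sense $s\circ\sigma_1^{j}=\sigma_0^{j}\circ s$ and $r\circ\sigma_1^{j}=\sigma_0^{j}\circ r$ for every integer $j$. This follows by induction on $j\ge 0$ from $s\circ\sigma_1=\sigma_0\circ s$ and $r\circ\sigma_1=\sigma_0\circ r$. Negative $j$ are handled by applying $\sigma_0^{-1}$ and $\sigma_1^{-1}$ to these identities.

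Next, I unwind the definitions. By definition, $e_1\cdots e_k$ is a path in $E_\sigma$ if and only if $r_\sigma(e_i)=s(e_{i+1})$ for all $1\le i\le k-1$, that is, $\sigma_0^{-1}(r(e_i))=s(e_{i+1})$. Likewise, $f_1\cdots f_k$ is a path in $E$ if and only if $r(f_i)=s(f_{i+1})$ for all $1\le i\le k-1$. With $f_i=\sigma_1^{i-1}(e_i)$, the first step gives
\[
r(f_i)=\sigma_0^{i-1}(r(e_i))\qquad\text{and}\qquad s(f_{i+1})=\sigma_0^{i}(s(e_{i+1})).
\]
So the $i$-th condition for $f_1\cdots f_k$ reads $\sigma_0^{i-1}(r(e_i))=\sigma_0^{i}(s(e_{i+1}))$.

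Since $\sigma_0$ is a bijection, I can apply $\sigma_0^{-i}$ to both sides. This shows the $i$-th condition is equivalent to $\sigma_0^{-1}(r(e_i))=s(e_{i+1})$, which is exactly the $i$-th condition for $e_1\cdots e_k$ in $E_\sigma$. Since the equivalence holds for each $i$ separately, the two paths exist simultaneously, and the case $k=1$ is trivial because any single edge is a path in both graphs.

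I do not expect a genuine obstacle. The only point requiring care is the exponent bookkeeping: the edge $e_i$ is twisted by $\sigma_1^{i-1}$ while $e_{i+1}$ is twisted by $\sigma_1^{i}$, so the two sides differ by exactly one power of $\sigma_0$. That single power is precisely the twist $\sigma_0^{-1}$ built into $r_\sigma$.
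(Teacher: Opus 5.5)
Your proposal is correct and follows essentially the same route as the paper: both reduce each path condition to the edge-by-edge equality $r_\sigma(e_i)=s(e_{i+1})$. Both then convert it to $r(f_i)=s(f_{i+1})$ using $s\circ\sigma_1=\sigma_0\circ s$, $r\circ\sigma_1=\sigma_0\circ r$, and the bijectivity of $\sigma_0$. The only difference is direction: the paper applies $\sigma_0^{\,i-1}$ to $r(e_i)=\sigma_0(s(e_{i+1}))$, while you apply $\sigma_0^{-i}$ to the $f$-condition.
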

\begin{proof}
We note that $e_1e_2\cdots e_k$ is a path in $E_{\sigma}$ if and only if $$r_\sigma(e_i)=s(e_{i+1})$$ for all $1\le i \le k-1$, if and only if $$\sigma_0^{-1}(r(e_i))=s(e_{i+1})$$ for all $1\le i \le k-1$, if and only if $$r(e_i)= \sigma_0(s(e_{i+1}))=s(\sigma_1(e_{i+1}))$$ for all $1\le i \le k-1$. Equivalently, applying $\sigma_0^{\,j-1}$ to the last equality, we obtain that
$$r(f_i)=s(f_{i+1})$$ for all $1\le i \le k-1$, i.e., $f_1f_2\cdots f_k$ is a path in $E$, thus finishing the proof.
\end{proof}

We are now in a position to state the main theorem of this section, which shows that the twist of a Leavitt path algebra $L_K(E)$ by a graded automorphism arising from an automorphism of $E$ is again a Leavitt path algebra.

\begin{theorem}\label{thm:twist-is-LPA}
Let $K$ be a field and  $E$ an arbitrary graph, and let $\sigma$ be an automorphism of $E$, with induced graded algebra automorphism of $L_K(E)$ also denoted by $\sigma$. Then
\[
L_\K(E)^{\sigma}\ \cong\ L_\K(E_{\sigma})
\]
as $\Z$-graded $\K$-algebras. In particular, if $E^0$ is finite, then the isomorphism is unital.
\end{theorem}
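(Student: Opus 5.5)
We construct a graded homomorphism $\Phi\colon L_K(E_\sigma)\to L_K(E)^{\sigma}$ via the universal property of $L_K(E_\sigma)$. Injectivity will come from the Graded Uniqueness Theorem, and surjectivity from the fact that the images of the generators generate $L_K(E)^\sigma$. Write $\star$ for the twisted product. The natural choice is
\[
Q_v=v,\qquad T_e=e,\qquad T_{e^*}=\sigma^{-1}(e^*)=\sigma_1^{-1}(e)^*\qquad (v\in E^0,\ e\in E^1),
\]
of degrees $0,1,-1$, mirroring Proposition \ref{prop:embed} with $\varphi=\sigma$.

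We verify relations (1)--(4) for the graph $E_\sigma$, whose range map is $r_\sigma=\sigma_0^{-1}\circ r$. Relation (1) holds because degree-$0$ elements multiply as before. For (2), $s(e)\star e=s(e)e=e$ and $e\star r_\sigma(e)=e\,\sigma(\sigma_0^{-1}(r(e)))=e\,r(e)=e$. Also $r_\sigma(e)\star T_{e^*}=\sigma_0^{-1}(r(e))\,\sigma_1^{-1}(e)^*=r(\sigma_1^{-1}(e))\sigma_1^{-1}(e)^*=T_{e^*}$, and $T_{e^*}\star s(e)=\sigma^{-1}(e^*)\sigma^{-1}(s(e))=\sigma^{-1}(e^*s(e))=T_{e^*}$. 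For (3), $T_{e^*}\star T_f=\sigma^{-1}(e^*)\sigma^{-1}(f)=\sigma^{-1}(e^*f)=\delta_{e,f}\sigma_0^{-1}(r(e))=\delta_{e,f}Q_{r_\sigma(e)}$. For (4), the regular vertices of $E_\sigma$ and $E$ coincide, since the source map is unchanged, and $\sum_{e\in s^{-1}(v)}e\star\sigma^{-1}(e^*)=\sum ee^*=v$. The universal property therefore gives a graded homomorphism $\Phi$. By the Graded Uniqueness Theorem it is injective, since $\Phi(v)=v\neq 0$.

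For surjectivity, note that the $\star$-subalgebra generated by the image contains $E^0$, $E^1$, and $\sigma^{-1}(e^*)$ for every $e$. Since $\sigma_1$ is a bijection, it therefore contains every $f^*$ as well. It remains to show that every spanning element $pq^*$ of $L_K(E)$ lies in this subalgebra. This is the step needing care; I would handle it by induction on $|p|+|q|$. For a single edge $a$ of degree $1$ and any $x$, we have $a\star x=a\sigma(x)$, so $ax=a\star\sigma^{-1}(x)$. For $a$ of degree $-1$, $a\star x=a\sigma^{-1}(x)$, so $ax=a\star\sigma(x)$. Hence if $x=p'q^*$ lies in the subalgebra $B$, then $ex=e\star\sigma^{-1}(x)$. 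This lies in $B$ provided $B$ is $\sigma$-stable.

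The set of generators $E^0\cup E^1\cup (E^1)^*$ is $\sigma$-stable, and $\sigma$ is a graded automorphism of $L_K(E)^\sigma$, because $\sigma(a\star b)=\sigma(a)\sigma^{m+1}(b)=\sigma(a)\star\sigma(b)$. Consequently $B$ is $\sigma$-stable. Peeling letters off the left of $pq^*$ one at a time then shows $pq^*\in B$, so $\Phi$ is bijective.

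Finally, when $E^0$ is finite, $\Phi(1)=\sum_v v=1$, as in Proposition \ref{prop:embed}, so the isomorphism is unital. The main obstacle is the surjectivity bookkeeping, which the $\sigma$-stability observation resolves.
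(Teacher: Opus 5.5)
Your proof is correct. It agrees with the paper on the choice of images $v$, $e$, $\sigma^{-1}(e^*)=\sigma_1^{-1}(e)^*$, on the verification of relations (1)--(4) for $E_\sigma$, and on injectivity via the Graded Uniqueness Theorem; the two proofs differ only in how surjectivity is obtained.

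The paper proves, by induction on $k+m$, the explicit product formula
\[
A_{e_1}\star\cdots\star A_{e_k}\star B_{f_1^*}\star\cdots\star B_{f_m^*}=e_1\sigma_1(e_2)\cdots\sigma_1^{k-1}(e_k)\,\sigma_1^{k-1}(f_1)^*\cdots\sigma_1^{k-m}(f_m)^* .
\]
Then, for each spanning element $\alpha\beta^*$, it writes down a preimage $xy^*$ directly. Here $x,y$ are paths in $E_\sigma$ obtained by untwisting the edges of $\alpha,\beta$ by suitable powers of $\sigma_1$, and Lemma \ref{lm:twsist-path} guarantees that these are paths.

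You never compute a $\star$-product of more than two factors. Instead you note three things. First, $\sigma$ is still a $\star$-automorphism of $L_K(E)^\sigma$. Second, the generating set $E^0\cup E^1\cup(E^1)^*$ of the image $B$ is $\sigma$-invariant, so $\sigma(B)=B$. Third, the identities $ax=a\star\sigma^{\mp1}(x)$ hold for $a$ of degree $\pm1$. Together these let you peel letters off $pq^*$ by induction.

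Your route is shorter and avoids all the exponent bookkeeping. However, it only shows that preimages exist. The paper's computation also identifies them: $\Theta$ sends the path $e_1\cdots e_k$ of $E_\sigma$ to the single path $e_1\sigma_1(e_2)\cdots\sigma_1^{k-1}(e_k)$ of $E$. This explicit path-to-path description is what the paper later reuses in Proposition \ref{prop:path-algebra-twist}. With your method, you would recover that proposition by rerunning the peeling argument inside the $\star$-subalgebra generated by $E^0\cup E^1$, which is also $\sigma$-stable, but you would not get the explicit correspondence on paths.
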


\begin{proof}
We define the elements $\{Q_v\mid v\in E^0\}$ and $\{A_e, B_{e^*}\mid e\in E^1\}$ of $L_K(E)^{\varphi}$ by setting 
\[
Q_v=v,\quad A_e=e,\quad \text{and}\quad  B_{e^*}=\sigma^{-1}(e^*)=\bigl(\sigma_1^{-1}(e)\bigr)^*.
\] We claim that $\{Q_v, A_e, B_{e^*}\mid v\in E^0, e\in E^1\}$ is a family in $L_K(E)^{\sigma}$ satisfying the relations analogous to (1) - (4) in Definition \ref{LPAs}. Indeed, for all $v, w\in E^0$, we have
$$Q_v\star Q_w= v \star w = v\,\sigma^0(w)=vw= \delta_{v,w} w=\delta_{v,w}Q_w,$$  which establishes relation $(1)$. 

For $(2)$, for each $e\in E^1$, we have
$$Q_{s(e)}\star A_e=s(e)\star e=s(e) \sigma^0(e)= s(e) e =e = A_e.$$
Since $\sigma(r_{\sigma}(e))=\sigma(\sigma_0^{-1}r(e))=r(e)$, we also have
\[
A_e\star Q_{r_{\sigma}(e)}= e\star r_\sigma(e)=e\,\sigma\bigl(r_{\sigma}(e)\bigr)=e\,r(e)=e = A_e.
\]
Since $\deg(B_{e^*})=-1$, we obtain that
\[
B_{e^*}\star Q_{s(e)}=\sigma^{-1}(e^*)\,\sigma^{-1}\bigl(s(e)\bigr)=\sigma^{-1}\bigl(e^*s(e)\bigr)=\sigma^{-1}(e^*) = B_{e^*},\]
and
\[Q_{r_\sigma(e)}\star B_{e^*}= \sigma^{-1}(r(e)) \sigma^{-1}(e^*)= \sigma^{-1}\bigl(r(e)e^*\bigr)= \sigma^{-1}(e^*)=B_{e^*}.
\]

For $(3)$, for all $e, f\in E^1$, we have
\[
B_{e^*}\star A_f=\sigma^{-1}(e^*)\,\sigma^{-1}(f)=\sigma^{-1}(e^*f)=\delta_{e,f}\,\sigma_0^{-1}\bigl(r(e)\bigr)=\delta_{e,f}\,Q_{r_{\sigma}(e)} .
\]

For $(4)$,  since $s_{\sigma}=s$, for a regular vertex $v$,
\[
\sum_{e\in s^{-1}(v)}A_e\star B_{e^*}=\sum_{e\in s^{-1}(v)}e\,\sigma\bigl(\sigma^{-1}(e^*)\bigr)=\sum_{e\in s^{-1}(v)}ee^*=v=Q_v,
\] thus proving the claim. By the Universal Property of $L_K(E_{\sigma})$, there is a unique $K$-algebra homomorphism $\Theta\colon L_\K(E_{\sigma})\longrightarrow L_K(E)^{\sigma}$ such that 
\begin{center}
$\Theta(v)=Q_v$,\quad $\Theta(e)=A_e$,\quad and \quad $\Theta(e^*)=B_{e^*}$   
\end{center}
for all $v\in E^0$ and $e\in E^1$. Since 
$\deg(Q_v) = 0$, $\deg(A_e) = 1$, and $\deg(B_{e^*}) = -1$    
for all $v\in E^0$ and $e\in E^1$, it follows that $\Theta$ is a $\mathbb{Z}$-graded homomorphism. Moreover, $\Theta(v)=v\ne0$ for every $v\in E_0$. Therefore, by the Graded Uniqueness Theorem (see, e.g., \cite[Theorem 2.2.15]{AAS}),
$\Theta$ is injective.

We next prove that $\Theta$ is surjective. We first claim that
\begin{equation}
A_{e_1}\star\cdots\star A_{e_k}\star B_{f^*_1}\star \cdots \star B_{f^*_m}=e_1\sigma_1(e_2)\cdots\sigma_1^{k-1}(e_k)\sigma^{k-1}_1(f_1)^*\cdots \sigma^{k-m}_1(f_m)^*
\end{equation} for all $e_i, f_j\in E^1$ and for all $k, m\in \mathbb{N}$ with $k + m\ge 1$.
We use induction on $k + m$ to establish the claim. If $k + m =1$, then either $(k, m) = (1, 0)$ or $(k, m) = (0, 1)$, in which case the claim is immediate. Now suppose that $k+ m\ge 2$, and assume inductively that the claim holds for all tuples of edges $(e_1, \ldots, e_k)$ and $(f_1, \ldots, f_m)$ satisfying $1 < k + m \le n$.  We prove the claim for $k + m = n$. If $k = m = 1$, then the claim follows immediately. If $m \ge 2$, then by the induction hypothesis, we have 
$$A_{e_1}\star\cdots\star A_{e_k}\star B_{f^*_1}\star \cdots \star B_{f^*_{m-1}}=e_1\sigma_1(e_2)\cdots\sigma_1^{k-1}(e_k)\sigma^{k-1}_1(f_1)^*\cdots \sigma^{k-m +1}_1(f_{m-1})^*,$$ and so 
\begin{align*}
A_{e_1}\star\cdots\star A_{e_k}\star B_{f^*_1}\star \cdots \star B_{f^*_m}  & = (A_{e_1}\star\cdots\star A_{e_k}\star B_{f^*_1}\star \cdots \star B_{f^*_{m-1}}) \star B_{f^*_m}\\
                       & = e_1\sigma_1(e_2)\cdots\sigma_1^{k-1}(e_k)\sigma^{k-1}_1(f_1)^*\cdots \sigma^{k-m +1}_1(f_{m-1})^*\star \sigma^{-1}_1(f_m)^* \\
                       & = e_1\sigma_1(e_2)\cdots\sigma_1^{k-1}(e_k)\sigma^{k-1}_1(f_1)^*\cdots \sigma^{k-m +1}_1(f_{m-1})^*\sigma^{k-m}_1(f_m)^*, \end{align*}
as desired.

If $k \ge 2$, then then by the induction hypothesis, we have
$$A_{e_2}\star\cdots\star A_{e_k}\star B_{f^*_1}\star \cdots \star B_{f^*_{m}}=e_2\sigma_1(e_3)\cdots\sigma_1^{k-2}(e_k)\sigma^{k-2}_1(f_1)^*\cdots \sigma^{k-m -1}_1(f_{m})^*,$$
and hence
\begin{align*}
A_{e_1}\star\cdots\star A_{e_k}\star B_{f^*_1}\star \cdots \star B_{f^*_m}  & = A_{e_1}\star (A_{e_2}\cdots\star A_{e_k}\star B_{f^*_1}\star \cdots \star B_{f^*_{m-1}} \star B_{f^*_m})\\
                       & = e_1\star e_2\sigma_1(e_3)\cdots\sigma_1^{k-2}(e_k)\sigma^{k-2}_1(f_1)^*\cdots \sigma^{k-m -1}_1(f_{m})^*\\
                       & = e_1\sigma( e_2\sigma_1(e_3)\cdots\sigma_1^{k-2}(e_k)\sigma^{k-2}_1(f_1)^*\cdots \sigma^{k-m -1}_1(f_{m})^*)\\
                       &=e_1\sigma_1(e_2)\cdots\sigma_1^{k-1}(e_k)\sigma^{k-1}_1(f_1)^*\cdots \sigma^{k-m +1}_1(f_{m-1})^*\sigma^{k-m}_1(f_m)^*, \end{align*}
thus showing the claim.

We next prove that $\alpha\beta^*\in \text{Im}(\Theta)$ for all $\alpha, \beta\in \text{Path}(E)$ with $r(\alpha) = r(\beta)$. If $\alpha = v=\beta$ for some $v\in E^0$, then $\Theta(v) = Q_v = v$, as desired. If $|\alpha| \ge 1$ and $|\beta| =0$, then $\beta = r(\alpha)$. Write $\alpha = \alpha_1\cdots \alpha_k$, where $\alpha_i\in E^1$ for all $1\le i\le k$. Let $p := e_1\cdots e_k$, where $e_i = \sigma^{-(i-1)}_1(\alpha_i)$ for all $1\le i\le k$. By Lemma \ref{lm:twsist-path}, $p$ is a path in $E_{\sigma}$. Then, using formula $(1)$, we have
$$\Theta(p) = A_{e_1}\star A_{e_2}\star \cdots \star A_{e_k}=e_1\sigma_1(e_2)\cdots\sigma_1^{k-1}(e_k)= \alpha_1\alpha_2\cdots\alpha_k=\alpha,$$ and so $\alpha\in \text{Im}(\Theta)$, as desired.

If $|\alpha| = 0$ and $|\beta|\ge 1$, then $\alpha = r(\beta)$. Write $\beta = \beta_m\cdots \beta_2\beta_1$, where $\beta_i\in E^1$ for all $1\le i\le m$. Let $q := f_m\cdots f_2f_1$, where $f_i = \sigma^i_1(\beta_i)$ for all $1\le i\le m$. Since $\beta$ is a path in $E$ and $\sigma^m$ is an automorphism of $E$, $\sigma^m_1(\beta_m)\cdots \sigma^m(\beta_1)$ is also a path in $E$. Therefore, by Lemma \ref{lm:twsist-path}, $q$ is a path in $E_{\sigma}$. Then, using formula $(1)$, we obtain that $$\Theta(q^*) = B_{f^*_1}\star B_{f^*_2} \cdots \star B_{f^*_m}= \sigma^{-1}_1(f_1)^*\sigma^{-2}_1(f_2)^*\cdots\sigma^{-m}_1(f_m)^*= f_1^*f_2^*\cdots f_m^*=\beta^*,$$ and hence $\beta^*\in \text{Im}(\Theta)$, as desired.

If $|\alpha|\ge 1$ and $|\beta|\ge 1$, then we express 
\begin{center}
$\alpha = \alpha_1\cdots \alpha_l$\quad and \quad $\beta^* = \beta_t^*\cdots \beta_1^*$,    
\end{center}
where $\alpha_i, \beta_j\in E^1$ for all $i, j$. Let 
\begin{center}
$x = e_1\cdots e_l$\quad and \quad $y = f_m\cdots f_1$,    
\end{center}
where $e_i = \sigma^{-(i-1)}_1(\alpha_i)$ and $f_j = \sigma^{j-l}_1(\beta_j)$ for all $1\le i\le l$ and $1\le j\le m$.
Then, using formula $(1)$, we have 
\begin{align*}
\Theta(xy^*) = A_{e_1}\star\cdots\star A_{e_l}\star B_{f^*_1}\star \cdots \star B_{f^*_t}
                       & = e_1\sigma_1(e_2)\cdots\sigma_1^{l-1}(e_l)\sigma^{l-1}_1(f_1)^*\cdots \sigma^{l-t}_1(f_t)^*\\
                       & = \alpha_1\cdots \alpha_l\beta_1^*\cdots \beta_t^*=\alpha\beta^*, \end{align*}
so $\alpha\beta^*\in \text{Im}(\Theta)$, and hence $\Theta$ is surjective. Therefore, $\Theta$ is a graded $K$-algebra isomorphism.

If $E^0$ is finite, then  $1_{L_K(E_{\sigma})} = \sum_{v\in E^0} = 1_{(L_K(E))^{\sigma}}$, and hence $$\Theta(1_{L_K(E_{\sigma})}) = \sum_{v\in E^0}\Theta(v) = \sum_{v\in E^0}v = 1_{(L_K(E))^{\sigma}},$$
thus completing the proof.                    
\end{proof}

For clarification, we illustrate Theorem \ref{thm:twist-is-LPA}  by presenting the following example.

\begin{example} \label{ex:C2}
Consider the graph $E=C_2$: \[\begin{tikzcd}
	{\bullet v_1} && {\bullet v_2}
	\arrow["{e_1}", from=1-1, to=1-3]
	\arrow["{e_2}", shift left=3, from=1-3, to=1-1]
\end{tikzcd}\]
and let $\sigma$ be the automorphism of $E$ introduced in Example \ref{ex:twist-quiver}. We then have  $E_\sigma=R_1\sqcup R_1$:
\[\begin{tikzcd}
	{\bullet v_1} && {\bullet v_2}
	\arrow["{e_1}", from=1-1, to=1-1, loop, in=55, out=125, distance=10mm]
	\arrow["{e_2}", from=1-3, to=1-3, loop, in=55, out=125, distance=10mm]
\end{tikzcd}\]
Theorem~\ref{thm:twist-is-LPA} shows that
\[
L_\K(C_2)^{\sigma}\;\cong L_\K((C_2)_{\sigma}) \cong\;\K[x,x^{-1}]\times\K[x,x^{-1}] .
\]
Thus, the twisted Leavitt path algebra is drastically different from the original Leavitt path algebra. Indeed, $$L_\K(C_2)\cong M_2(\K[x, x^{-1}]),$$ which is noncommutative, whereas $L_\K(C_2)^{\sigma}$ is commutative. 
\end{example}

\section{A noncommutative Proj for $\mathbb{Z}$-graded algebras}\label{noncom-Proj}

The noncommutative projective scheme of Artin and Zhang \cite{AZ} is
defined for connected $\mathbb{N}$-graded algebras, whereas the Leavitt path
algebra $L_K(E)$ of a graph $E$ over a field $K$ is $\mathbb{Z}$-graded, with its degree-zero component typically much larger than $K$. 
We therefore begin by introducing a version of
$\proj$ for arbitrary $\mathbb{Z}$-graded algebras. This construction recovers the Artin--Zhang construction in the connected $\mathbb{N}$-graded case. Throughout, let $A=\bigoplus_{n\in\mathbb{Z}}A_n$
be a unital $\mathbb{Z}$-graded $K$-algebra, and assume that all modules are right modules.

Consider the following two-sided ideal
\[
  I_A =\; A\Big(\bigoplus_{n\neq 0}A_n\Big)A.
\]
Since $I_A$ is generated by homogeneous elements, it is a graded ideal.

Recall that a full subcategory of a Grothendieck category is
\emph{localizing} if it is closed under subobjects, quotients,
extensions, and arbitrary direct sums. The intersection of any collection of localizing subcategories is again localizing.

\begin{definition}\label{def:proj}
Let $\Tors_{I}(A)$ be the smallest localizing subcategory of $\Gr\text{-}A$
that contains every graded module $M$ with $MI_A=0$. We put
\[\mathsf{QGr}_I\text{-}A =\; \mathsf{Gr}\text{-}A\big/\Tors_I(A),
  \qquad
  \pi\colon \mathsf{Gr}\text{-}A\longrightarrow \mathsf{QGr}_I\text{-}A
\]
for the Gabriel quotient \cite{Gabriel} and the quotient functor, respectively. The
\emph{noncommutative projective scheme} of $A$ is the triple
\[
  \proj A =\; \big(\mathsf{QGr}_I\text{-}A,\ \pi(A_A),\ s\big),
\]
where $s$ is the autoequivalence induced by the degree shift
$M\mapsto M(1)$. As in \cite{AZ}, two such triples
$(\mathcal C,\mathcal O,s)$ and $(\mathcal C',\mathcal O',s')$ are
\emph{equivalent} if there exists an equivalence
$G\colon\mathcal C\to\mathcal C'$ with $G(\mathcal O)\cong\mathcal O'$
and a natural isomorphism $G\circ s\cong s'\circ G$.
\end{definition}

The shift is well defined on the quotient. Indeed, $M\mapsto M(1)$ is
an autoequivalence of $\mathsf{Gr}\text{-}A$, and $M(1)I_A=0$ if and only if
$MI_A=0$. So the shift maps the generating class of $\Tors_I(A)$ onto
itself, and hence preserves $\Tors_I(A)$. Since $\mathsf{Gr}\text{-}A$ is a
Grothendieck category, so is $\mathsf{QGr}_I\text{-}A$, and $\pi$ is exact
with a right adjoint.

\begin{proposition}\label{prop:AZ-recovered}
Let $A$ be a connected $\mathbb{N}$-graded, right noetherian $K$-algebra. We may regard $A$ as a $\mathbb{Z}$-graded algebra with $A_n=0$ for $n<0$. Then $\Tors_I(A)$ defined as above coincides with the torsion subcategory of Artin--Zhang. Consequently, $\proj A$ is the noncommutative projective scheme in the sense of \cite{AZ}.
\end{proposition}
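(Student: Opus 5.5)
The plan is to identify the ideal $I_A$ explicitly and then compare the two localizing subcategories directly. Since $A$ is connected $\mathbb{N}$-graded, $\bigoplus_{n\neq 0}A_n=A_{\ge 1}=A_+$, and $A_+$ is already a two-sided ideal. Hence $I_A=AA_+A=A_+$. Recall that the Artin--Zhang torsion class $\Tors A$ consists of the graded modules $M$ in which every element is torsion, i.e. each $m\in M$ satisfies $mA_{\ge n}=0$ for some $n$. It is a localizing subcategory of $\Gr\text{-}A$; this is standard, and we will check closure under extensions using that $A$ is right noetherian, so that $A_{\ge n}$ is finitely generated.

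The first inclusion, $\Tors_I(A)\subseteq \Tors A$, is immediate. Every $M$ with $MA_+=0$ is torsion, since $mA_{\ge 1}=0$ for all $m$. Because $\Tors A$ is localizing and $\Tors_I(A)$ is the smallest localizing subcategory containing these modules, the inclusion follows.

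For the converse, take $M\in\Tors A$ and filter it by the socle-type series
\[ M_k=\{m\in M\mid mA_+^k=0\}, \qquad k\ge 0. \]
Each $M_k$ is a graded submodule. Each factor $M_{k+1}/M_k$ is annihilated by $A_+$, because $mA_+\subseteq M_k$ whenever $mA_+^{k+1}=0$. Induction on $k$, using closure under extensions, gives $M_k\in\Tors_I(A)$ for all $k$. Finally $M=\bigcup_k M_k$, and a directed union is a quotient of the direct sum $\bigoplus_k M_k$, so $M\in\Tors_I(A)$.

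The one point needing care, and the main obstacle, is that $M=\bigcup_k M_k$. We know $mA_{\ge n}=0$ for some $n$, and we need $mA_+^k=0$ for some $k$. This holds simply because $A_+^n\subseteq A_{\ge n}$, as the product of $n$ elements of positive degree has degree at least $n$. So the direction needed is automatic. The noetherian hypothesis enters only in citing that $\Tors A$ is localizing, and in Artin--Zhang's original setup where $\Tors A$ is defined as the torsion class.

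With $\Tors_I(A)=\Tors A$, the quotient categories $\mathsf{QGr}_I\text{-}A$ and $\mathsf{QGr}\text{-}A$ coincide, along with $\pi(A_A)$ and the shift. Therefore the triple $\proj A$ is exactly the Artin--Zhang triple.
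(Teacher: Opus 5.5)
Your proof is correct and takes essentially the same route as the paper. Both identify $I_A=A_{\ge 1}$ and get $\Tors_I(A)\subseteq\mathcal T$ by minimality. For the converse, both use $A_{\ge 1}^n\subseteq A_{\ge n}$ to build filtrations whose factors are annihilated by $A_{\ge 1}$, then close under extensions, direct sums and quotients. The difference is cosmetic: you use the ascending annihilator filtration $M_k=\{m\mid mA_+^k=0\}$ of $M$ and write $M$ as a directed union, while the paper uses the finite descending filtration $N\supseteq NA_{\ge 1}\supseteq\cdots\supseteq NA_{\ge 1}^n=0$ of each cyclic submodule $N=mA$ and writes $M$ as a quotient of $\bigoplus_m mA$.
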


\begin{proof}
Here $I_A=A_{\ge 1}$, since $A_{\ge 1}$ is already a
two-sided ideal. Let $\mathcal T$ be the Artin--Zhang torsion class.
It consists of the graded modules in which every element $m$ satisfies
$mA_{\ge n}=0$ for some $n$, and it is a localizing subcategory \cite[\S2]{AZ}. Since $\mathcal T$
contains every module annihilated by $A_{\ge 1}$, we have
$\Tors_I(A)\subseteq\mathcal T$.

For the reverse inclusion, let $M\in\mathcal T$, and let $m\in M$ be homogeneous, and choose $n$ such that $mA_{\ge n}=0$. Since
$A_{\ge 1}^{n}\subseteq A_{\ge n}$, the graded submodule $N=mA$
satisfies $NA_{\ge 1}^{n}=m\,A_{\ge 1}^{n}=0$. Thus
\[
  N\supseteq NA_{\ge 1}\supseteq\cdots\supseteq NA_{\ge 1}^{n}=0
\]
is a finite filtration by graded submodules whose successive factors are annihilated by $A_{\ge 1}$. Hence $N\in\Tors_I(A)$, since $\Tors_I(A)$ is closure under
extensions. Finally, $M$ is a quotient of the direct sum of its submodules $mA$, where $m$ ranges over the homogeneous elements of $M$, and so $M\in\Tors_I(A)$.
\end{proof}

\begin{proposition}\label{prop:strong}
Let $A$ be a strongly $\Z$-graded $K$-algebra. Then $I_A=A$ and
$\Tors_I(A)=0$. Consequently, the functor $M\mapsto M_0$ induces an
equivalence
\[
  \proj A\;\simeq\;\big(\mathsf{Mod}\text{-}A_0,\ A_0,\ -\otimes_{A_0}A_1\big).
\]
\end{proposition}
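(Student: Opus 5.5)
The argument has three parts: compute $I_A$, deduce that $\Tors_I(A)$ is trivial, and then apply Dade's theorem for strongly graded rings, checking what it does to the structure sheaf and to the shift.

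First, $I_A=A$. Since $A$ is strongly graded, $A_1A_{-1}=A_0$, so $1\in A_0=A_1A_{-1}\subseteq A_1A\subseteq I_A$. Hence $I_A=A$. Now let $M$ be a graded module with $MI_A=0$. Then $M=M\cdot 1\subseteq MI_A=0$, so $M=0$. The generating class of $\Tors_I(A)$ therefore consists only of zero modules, and the smallest localizing subcategory containing it is $\{0\}$. So $\Tors_I(A)=0$, the quotient functor $\pi$ is an equivalence, and $\proj A\simeq(\Gr\text{-}A,\ A_A,\ (1))$.

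Second, I invoke Dade's theorem: for a strongly $\Z$-graded ring, the functor $(-)_0\colon\Gr\text{-}A\to\mathsf{Mod}\text{-}A_0$ is an equivalence, with quasi-inverse $N\mapsto N\otimes_{A_0}A$. The key input here is that the multiplication map $M_0\otimes_{A_0}A_n\to M_n$ is an isomorphism for all $n$, which uses $A_nA_{-n}=A_0$. Under this equivalence $A_A$ is sent to $(A_A)_0=A_0$, so the structure object corresponds to $A_0$.

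Third, I match the shifts. With the convention $M(1)_n=M_{n+1}$, we get $(M(1))_0=M_1$, and Dade's isomorphism gives $M_1\cong M_0\otimes_{A_0}A_1$. This isomorphism is natural in $M$, because it is induced by multiplication, which commutes with graded homomorphisms. Hence $(-)_0\circ s\cong(-\otimes_{A_0}A_1)\circ(-)_0$. Moreover, $-\otimes_{A_0}A_1$ is an autoequivalence of $\mathsf{Mod}\text{-}A_0$, since $A_1$ is an invertible $A_0$-bimodule with inverse $A_{-1}$.

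The only genuinely nontrivial step is the naturality and bijectivity of $M_0\otimes_{A_0}A_1\to M_1$. I will cite Dade's theorem for this rather than reprove it. If a direct argument is needed, surjectivity comes from $M_1=M_1A_0=M_1A_{-1}A_1\subseteq M_0A_1$. Injectivity comes from the inverse map $M_1\to M_0\otimes_{A_0}A_1$: write $1=\sum_i a_ib_i$ with $a_i\in A_{-1}$ and $b_i\in A_1$, and send $m\mapsto\sum_i ma_i\otimes b_i$.
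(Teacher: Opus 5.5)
Your proposal is correct and follows essentially the same route as the paper: $1\in A_1A_{-1}\subseteq I_A$, hence $\Tors_I(A)=0$, then Dade's theorem sends $A_A$ to $A_0$, and $M(1)_0=M_1\cong M_0\otimes_{A_0}A_1$ matches the shifts. Beyond what the paper writes, you also check naturality, the invertibility of $A_1$ as an $A_0$-bimodule, and the explicit inverse $m\mapsto\sum_i ma_i\otimes b_i$; the paper leaves these implicit, and your verifications of them are correct.
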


\begin{proof}
First, $1\in A_1A_{-1}\subseteq I_A$, so $I_A=A$. If $MI_A=0$, then
$M=M\cdot1=0$, so $\Tors_I(A)=0$ and $\mathsf{QGr}_I\text{-}A=\mathsf{Gr}\text{-}A$.

By Dade's theorem \cite{D}, $M\mapsto M_0$ is an equivalence
$\mathsf{Gr}\text{-}A\to \mathsf{Mod}\text{-}A_0$ with quasi-inverse $N\mapsto N\otimes_{A_0}A$. It
sends $A_A$ to $A_0$. Moreover, $M(1)_0=M_1$, and for strongly graded
$A$ the multiplication map $M_0\otimes_{A_0}A_1\to M_1$ is an
isomorphism.
\end{proof}

By \cite[Theorem~3.15]{H}, $\LE$ is strongly graded for every
finite graph $E$ without sinks. Hence, for such $E$,

\begin{equation}
  \proj \LE\;\simeq\;\big(\mathsf{Mod}\text{-}L_K(E)_0,\ L_K(E)_0,\ -\otimes_{L_K(E)_0}\LE_1\big).
\end{equation}

We now show that this notion of $\proj$ for $\mathbb Z$-graded algebras is well adapted to Zhang twists by graded
automorphisms. For a graded automorphism $\sigma$ of $A$, write
$\sigma_0=\sigma|_{A_0}$. For a right $A_0$-module $N$, let
$\sigma_0^{*}N$ denote $N$ with the action $n\cdot b=n\,\sigma_0(b)$.
Recall that Zhang's equivalence $F\colon \mathsf{Gr}\text{-}A\to \mathsf{Gr}\text{-}A^{\sigma}$
\cite{Zhang96} sends $M$ to the same graded vector space with action
\[
  m\star a=m\,\sigma^{n}(a)\qquad (m\in M_n).
\]

\begin{theorem}\label{thm:proj-twist}
Let $\sigma$ be a graded automorphism of $A$. Then the following statements hold:
\begin{enumerate}
\item $I_{A^\sigma}=I_A$ as graded subspaces of $A$.
\item $F$ restricts to an equivalence
  $\Tors_I(A)\simeq\Tors_I(A^{\sigma})$. It therefore induces an
  equivalence $\mathsf{QGr}_I\text{-}A\simeq \mathsf{Mod}_I\text{-}A^{\sigma}$ sending
  $\pi(A_A)$ to $\pi(A^\sigma_{A^\sigma})$.
\item If $A$ is strongly graded, then so is $A^{\sigma}$, and
  $A^\sigma_0=A_0$ as algebras. Under the identifications of
  Proposition~\ref{prop:strong}, the equivalence in (2) is the identity
  functor of $\mathsf{Mod}\text{-}A_0$, and
  \[
    \proj A^\sigma\;\simeq\;\big(\mathsf{Mod}\text{-}A_0,\ A_0,\ \sigma_0^{*}\circ(-\otimes_{A_0}A_1)\big).
  \]
  Thus the passage from $A$ to $A^\sigma$ leaves the pair
  $(\mathsf{Mod}\text{-}A_0,A_0)$ unchanged and composes the shift with
  $\sigma_0^{*}$.
\end{enumerate}
\end{theorem}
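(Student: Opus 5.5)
For (1), I would compare the two ideals by direct computation inside the common underlying graded space. The twisted product of homogeneous elements is $a\star b=a\,\sigma^{m}(b)$ for $a\in A_m$, and $\sigma$ is a graded automorphism, so $\sigma^m(A_n)=A_n$ for all $m,n$. Hence
\[
A\star\Big(\bigoplus_{n\neq0}A_n\Big)\star A=A\Big(\bigoplus_{n\neq0}A_n\Big)A .
\]
To see this, write a typical generator $a\star x\star b$ with $a\in A_m$ and $x\in A_n$, $n\ne 0$. It equals $a\,\sigma^m(x)\,\sigma^{m+n}(b)$. Here $\sigma^m(x)$ is again a nonzero-degree homogeneous element, and $\sigma^{m+n}$ is surjective on each $A_k$. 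So each space of generators lands in the other, and as $a,x,b$ vary the two sides agree. Thus $I_{A^\sigma}=I_A$ as graded subspaces.

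For (2), I would first check that $F$ preserves the generating class of $\Tors_I$. For $m\in M_n$ and $a\in I_A$ we have $m\star a=m\,\sigma^n(a)$, and $\sigma^n(I_A)=I_A$ because $I_A$ is $\sigma$-stable by (1). Hence $M\star I_{A^\sigma}=0$ if and only if $MI_A=0$. Since $F$ is an equivalence of Grothendieck categories that is the identity on underlying graded spaces, it preserves subobjects, quotients, extensions and direct sums. Therefore $F$ and its quasi-inverse (the twist by $\sigma^{-1}$) carry localizing subcategories to localizing subcategories, and so they map the smallest localizing subcategory containing the generators to the corresponding one: $F(\Tors_I(A))=\Tors_I(A^\sigma)$.

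By the universal property of the Gabriel quotient, $F$ then descends to an equivalence $\mathsf{QGr}_I\text{-}A\simeq\mathsf{QGr}_I\text{-}A^\sigma$ (I read the statement's $\mathsf{Mod}_I$ as this). Finally, $F(A_A)$ is $A$ with action $a\star b=a\sigma^n(b)$, which is exactly $A^\sigma_{A^\sigma}$, so $\pi(A_A)$ is sent to $\pi(A^\sigma_{A^\sigma})$.

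For (3), assume $A$ is strongly graded, so $A_mA_n=A_{m+n}$. Then
\[
A^\sigma_m\star A^\sigma_n=A_m\sigma^m(A_n)=A_mA_n=A_{m+n},
\]
so $A^\sigma$ is strongly graded. On degree $0$ the twisted product is $a\star b=a\sigma^0(b)=ab$, so $A^\sigma_0=A_0$ as algebras. Proposition \ref{prop:strong} applies to both algebras. Under these identifications, the equivalence of (2) becomes $N\mapsto F(N\otimes_{A_0}A)_0$. The degree-$0$ part of $F(M)$ is $M_0$ with action $m\star b=m\sigma^0(b)=mb$, so this equivalence is the identity on $\mathsf{Mod}\text{-}A_0$.

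The main obstacle, and the remaining step, is identifying the shift. The shift for $A^\sigma$ sends $N$ to $F(M)(1)_0$, where $M=N\otimes_{A_0}A$. As a vector space this is $M_1\cong N\otimes_{A_0}A_1$. The $A_0$-action, however, is the twisted one: for $m\in M_1$ and $b\in A_0$ it is $m\star b=m\,\sigma(b)$, and on $M_1$ we have $\sigma=\sigma_0$ in degree $0$. Hence the shift is $\sigma_0^{*}(N\otimes_{A_0}A_1)$. The care needed is to confirm that the $A^\sigma$-shift on $F(M)$ really uses the degree-$1$ twisting exponent, i.e.\ that $F(M)(1)_0=F(M)_1$ carries the action $m\star b=m\sigma^{1}(b)$. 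I would verify this, and naturality in $N$, directly from the formula for $F$. This yields the stated triple and shows that only the shift changes.
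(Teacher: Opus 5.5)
Your proposal is correct and follows the paper's proof essentially step for step. The one difference is in the shift: you compute the $A_0$-action on $F(M)(1)_0=M_1$ directly, whereas the paper describes the bimodule $A^\sigma_1$ (left action unchanged, right action $x\star b=x\,\sigma(b)$); this is the same computation. A small slip: the $\sigma$-stability of $I_A$ comes from $\sigma(S)=S$ and $\sigma$ being an algebra automorphism, not from (1), but the fact itself is immediate.
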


\begin{proof}
(1) Let $S=\bigoplus_{n\ne0}A_n$. For homogeneous $a\in A_p$,
$s\in A_q$ with $q\ne0$, and $b\in A_r$, we have
\[
  a\star s\star b=a\,\sigma^{p}(s)\,\sigma^{p+q}(b).
\]
Since each $\sigma^k$ restricts to a bijection of every $A_j$, the span
of these elements is the span of the products $A_pA_qA_r$ with
$q\ne0$, which is $I_A$.

(2) By (1), $I=I_A=I_{A^\sigma}$, and $I$ is $\sigma$-stable because
$S$ is. For $M\in \mathsf{Gr}\text{-}A$, we get
\[
  F(M)\star I=\operatorname{span}\{m\,\sigma^{n}(x): m\in M_n,\ x\in I\}=MI.
\]
So $MI=0$ if and only if $F(M)\star I=0$. An equivalence of
Grothendieck categories carries localizing subcategories to localizing
subcategories, so $F$ maps the smallest localizing subcategory
containing the modules annihilated by $I$ onto the corresponding one
for $A^\sigma$. The universal property of the Gabriel quotient
\cite{Gabriel} then gives the induced equivalence. Finally, $F(A_A)$ is $A^\sigma_{A^\sigma}$.

(3) First, $A^\sigma_n\star A^\sigma_m=A_n\sigma^n(A_m)=A_nA_m$, so
$A^\sigma$ is strongly graded. For $a,b\in A_0$ we have
$a\star b=ab$, so $A^\sigma_0=A_0$. Also
$F(M)_0=M_0$ with $m\star a=ma$ for $a\in A_0$, so $F$ becomes the
identity under $M\mapsto M_0$.

The $A_0$-bimodule $A^\sigma_1$ is $A_1$ with left action $b\star x=bx$
and right action $x\star b=x\,\sigma(b)$. Hence, naturally in $N$,
\[
  N\otimes_{A_0}A^\sigma_1\;\cong\;\sigma_0^{*}\big(N\otimes_{A_0}A_1\big).
\]
Now applying Proposition~\ref{prop:strong} to $A^\sigma$ gives us the result.
\end{proof}

Combining Theorem~\ref{thm:proj-twist} with Theorem~\ref{thm:twist-is-LPA}, we obtain the
following.

\begin{corollary}\label{cor:proj-LPA-twist}
Let $E$ be a finite graph without sinks, and let $\sigma$ be an
automorphism of $E$. Then $(\LE^{\sigma})_0 \cong \LE_0$, and
\[
  \proj L_K(E_\sigma)\;\simeq\;\proj \LE^\sigma\;\simeq\;
  \big(\mathsf{Mod}\text{-}\LE_0,\ \LE_0,\ \sigma_0^{*}\circ(-\otimes_{\LE_0}\LE_1)\big).
\]
In particular, $\proj \LE$ and $\proj \LE^\sigma$ have the same
underlying pair $(\mathcal C,\mathcal O)$, and they differ at most in
the shift functor.
\end{corollary}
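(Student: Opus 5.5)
The corollary follows by chaining three results already in place: the strong grading criterion of \cite[Theorem~3.15]{H}, Theorem~\ref{thm:twist-is-LPA}, and Theorem~\ref{thm:proj-twist}(3).

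First, since $E$ is finite with no sinks, \cite[Theorem~3.15]{H} shows that $\LE$ is strongly $\Z$-graded. Since $E^0$ is finite, $\LE$ is unital, so $\LE$ satisfies the standing hypotheses of Section~\ref{noncom-Proj}. By Lemma~\ref{lem:graphaut}, $\sigma$ induces a graded (unital) automorphism of $\LE$. Theorem~\ref{thm:proj-twist}(3) then gives three things:
\begin{itemize}
\item $\LE^\sigma$ is strongly graded;
\item $(\LE^\sigma)_0=\LE_0$ as algebras, since $a\star b=ab$ for $a,b$ of degree $0$, which proves the first claim;
\item the equivalence
\[
\proj \LE^\sigma\simeq\big(\mathsf{Mod}\text{-}\LE_0,\ \LE_0,\ \sigma_0^{*}\circ(-\otimes_{\LE_0}\LE_1)\big).
\]
\end{itemize}

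Second, Theorem~\ref{thm:twist-is-LPA} provides a unital graded isomorphism $\Theta\colon L_K(E_\sigma)\to\LE^\sigma$. I would check that a graded isomorphism $\Theta\colon B\to A$ induces an equivalence $\proj B\simeq\proj A$. Restriction of scalars along $\Theta$ is an isomorphism of categories $\Gr\text{-}A\to\Gr\text{-}B$, and it commutes with the degree shift. Because $\Theta$ preserves degrees, it maps $I_B$ onto $I_A$, so a module is annihilated by $I_A$ exactly when its restriction is annihilated by $I_B$. Hence the torsion classes $\Tors_I$ correspond, the Gabriel quotients are equivalent, and $A_A$ is sent to $B_B$. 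This yields $\proj L_K(E_\sigma)\simeq\proj\LE^\sigma$.

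Finally, comparing with the description of $\proj\LE$ displayed after Proposition~\ref{prop:strong} shows that the two triples share the pair $(\mathsf{Mod}\text{-}\LE_0,\LE_0)$. The shifts differ only by composition with $\sigma_0^*$, which is the statement that they differ at most in the shift functor.

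The only step requiring care is the transport of $\proj$ along a graded isomorphism. This is routine, so I expect no genuine obstacle; the proof is essentially bookkeeping.
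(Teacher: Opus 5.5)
Your proposal is correct and follows the paper's own route. The paper cites \cite[Theorem~3.15]{H} for strong gradedness of $L_K(E)$, uses Theorem~\ref{thm:proj-twist}(3) to get $(L_K(E)^\sigma)_0=L_K(E)_0$ and the triple for $\proj L_K(E)^\sigma$, and uses Theorem~\ref{thm:twist-is-LPA} to pass to $L_K(E_\sigma)$; your explicit check that a graded isomorphism carries $I_B$ onto $I_A$, and so transports $\Tors_I$, the quotient, the distinguished object and the shift, is a sound filling-in of what the paper leaves implicit.
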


The shift can be made completely explicit when $A$ has a unit of degree one.

\begin{proposition}\label{prop:degree-one-unit}
Suppose that $A$ contains a unit $t$ of degree~$1$, and put
$\alpha(a)=tat^{-1}$ for $a\in A_0$. Then the following statements hold:
\begin{enumerate}
\item $A=A_0[t,t^{-1};\alpha]$ is a skew Laurent polynomial ring, and
  $A$ is strongly graded.
\item $-\otimes_{A_0}A_1\cong\alpha^{*}$, and therefore
  $\proj A\simeq(\mathsf{Mod}\text{-}A_0,A_0,\alpha^{*})$.
\item For every graded automorphism $\sigma$ of $A$, the element $t$ is
  a unit of $A^\sigma$, and
  \[
    A^\sigma\cong A_0[u,u^{-1};\alpha\circ\sigma_0],
    \qquad
    \proj A^\sigma\simeq\big(\mathsf{Mod}\text{-}A_0,\ A_0,\ (\alpha\circ\sigma_0)^{*}\big).
  \]
\end{enumerate}
\end{proposition}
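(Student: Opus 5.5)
The plan is to treat the three parts in order, building (2) and (3) on (1) together with Proposition~\ref{prop:strong} and Theorem~\ref{thm:proj-twist}(3).

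For (1), I first check homogeneity of the inverse. Write $t^{-1}=\sum_n c_n$. Then $1=tt^{-1}=\sum_n tc_n$ with $tc_n\in A_{n+1}$, so $tc_n=0$ for $n\ne -1$, and hence $c_n=0$ because $t$ is a unit. Thus $t^{-1}\in A_{-1}$. It follows that $t^k$ is a unit of degree $k$ for every $k\in\Z$. For any $a\in A_k$ we have $at^{-k}\in A_0$, so $A_k=A_0t^k$, and this representation is unique since $t^k$ is a unit. Moreover $ta=\alpha(a)t$ for $a\in A_0$, where $\alpha$ is an automorphism of $A_0$ with inverse $a\mapsto t^{-1}at$. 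These facts give $A=A_0[t,t^{-1};\alpha]$. Strong gradedness follows from $1=t^kt^{-k}\in A_kA_{-k}$, which yields $A_{n+m}=A_0t^{n+m}=(A_0t^n)(A_0t^m)=A_nA_m$.

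For (2), I define $N\otimes_{A_0}A_1\to \alpha^*N$ by $n\otimes at\mapsto n\cdot a$, where $\alpha^*N$ carries the action $n\cdot b=n\alpha(b)$. The map is well defined because $A_1=A_0t$ is free of rank one as a left $A_0$-module. It is right $A_0$-linear: $(at)b=a\alpha(b)t$, which is sent to $n\,a\alpha(b)=(na)\cdot b$ in $\alpha^*N$. It is an isomorphism, natural in $N$. Combining this with Proposition~\ref{prop:strong} gives $\proj A\simeq(\mathsf{Mod}\text{-}A_0,A_0,\alpha^*)$.

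For (3), set $u=t\in A^\sigma_1$. I need a $\star$-inverse. The candidate is $v=\sigma^{-1}(t^{-1})\in A_{-1}$, since then $u\star v=t\,\sigma(\sigma^{-1}(t^{-1}))=1$. For the other side I compute $v\star u=\sigma^{-1}(t^{-1})\sigma^{-1}(t)=\sigma^{-1}(1)=1$; this uses that $\sigma$ is unital, which holds because $\sigma$ is an automorphism. Next, for $a\in A_0$ the twisted conjugation is $u\star a\star v=t\,\sigma(a)\,\sigma(v)=t\sigma_0(a)t^{-1}=\alpha(\sigma_0(a))$, where $\sigma(v)=t^{-1}$ and the degrees are $1,0,-1$. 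Since $A^\sigma_0=A_0$ by Theorem~\ref{thm:proj-twist}(3), applying (1) to $A^\sigma$ gives $A^\sigma\cong A_0[u,u^{-1};\alpha\circ\sigma_0]$. Then (2) applied to $A^\sigma$ yields the stated form of $\proj A^\sigma$.

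As a consistency check, I will compare this with Theorem~\ref{thm:proj-twist}(3). There the shift is $\sigma_0^*\circ\alpha^*$, and since $\sigma_0^*\alpha^*=(\alpha\sigma_0)^*$ the two descriptions agree. The only step requiring care is verifying two-sidedness of the inverse in $A^\sigma$ and keeping the degree bookkeeping in the twisted products correct; the rest is routine.
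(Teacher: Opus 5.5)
Your proposal is correct and follows the paper's proof essentially step for step: you use the decomposition $A_n=A_0t^n$ with $1=t^nt^{-n}$, the map $n\otimes at\mapsto na$ identifying the shift with $\alpha^{*}$, and the two-sided $\star$-inverse $\sigma^{-1}(t^{-1})$, whose twisted conjugation gives $\alpha\circ\sigma_0$. Your explicit check that $t^{-1}$ is homogeneous of degree $-1$ supplies a point the paper uses tacitly.
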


\begin{proof}
(1) For $x\in A_n$ we have $x=(xt^{-n})t^n$ with $xt^{-n}\in A_0$, and
$at^n=0$ forces $a=0$. Hence $A_n=A_0t^n$ is free of rank one as a left
$A_0$-module. Moreover $t^na=\alpha^n(a)t^n$, and
$1=t^nt^{-n}\in A_nA_{-n}$.

(2) By (1), $N\otimes_{A_0}A_1\to N$, $n\otimes at\mapsto na$, is a
bijection. It transports the right action to
\[
  (n\otimes t)b=n\otimes\alpha(b)t,
\]
that is, to $\alpha^{*}N$. Now apply Proposition~\ref{prop:strong}.

(3) The element $t'=\sigma^{-1}(t^{-1})$ has degree $-1$, and
\[
  t\star t'=t\,\sigma(t')=1,
  \qquad
  t'\star t=t'\,\sigma^{-1}(t)=\sigma^{-1}(t^{-1}t)=1.
\]
For $a\in A_0$,
\[
  t\star a\star t'=t\,\sigma(a)\,\sigma(t')=t\,\sigma(a)\,t^{-1}=\alpha(\sigma(a)).
\]
Apply (1) and (2) to $A^\sigma$. This agrees with
Theorem~\ref{thm:proj-twist}(3), since
$\sigma_0^{*}\circ\alpha^{*}=(\alpha\circ\sigma_0)^{*}$.
\end{proof}

We now show how the twist changes the shift functor. The following elementary observation is all that is needed to distinguish the triples associated to a Leavitt path algebra and its twist.

\begin{lemma}\label{lem:rigid}
Let $\mathcal C$ and $\mathcal C'$ be categories, let $s$ and $s'$ be
autoequivalences of $\mathcal C$ and $\mathcal C'$ respectively, and
suppose that there is an equivalence $G\colon\mathcal C\to\mathcal C'$
with $G\circ s\cong s'\circ G$. If $s'\cong \text{id}_{\mathcal C'}$, then
$s\cong \text{id}_{\mathcal C}$.
\end{lemma}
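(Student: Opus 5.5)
The plan is to transport the trivialization of $s'$ back along $G$ using a quasi-inverse. Since $G$ is an equivalence, fix a quasi-inverse $H\colon\mathcal C'\to\mathcal C$ together with natural isomorphisms $\eta\colon \text{id}_{\mathcal C}\xrightarrow{\ \sim\ } HG$ and $\varepsilon\colon GH\xrightarrow{\ \sim\ }\text{id}_{\mathcal C'}$. We are also given natural isomorphisms $\phi\colon G\circ s\xrightarrow{\ \sim\ } s'\circ G$ and $\psi\colon s'\xrightarrow{\ \sim\ }\text{id}_{\mathcal C'}$.

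First, whisker $\psi$ by $G$ on the right to obtain $\psi G\colon s'\circ G\xrightarrow{\ \sim\ } G$. Composing this with $\phi$ gives a natural isomorphism
\[
\theta:=\psi G\circ\phi\colon\ G\circ s\xrightarrow{\ \sim\ } G .
\]
Next, apply $H$ to obtain $H\theta\colon HGs\xrightarrow{\ \sim\ } HG$. Then conjugate by $\eta$: the composite
\[
s\xrightarrow{\ \eta s\ } HGs\xrightarrow{\ H\theta\ } HG\xrightarrow{\ \eta^{-1}\ }\text{id}_{\mathcal C}
\]
is a natural isomorphism $s\cong\text{id}_{\mathcal C}$. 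This holds because horizontal and vertical composites of natural isomorphisms are natural isomorphisms.

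An alternative route avoids the quasi-inverse. Since $G$ is fully faithful, each component $\theta_X\colon G(s(X))\to G(X)$ equals $G(\mu_X)$ for a unique isomorphism $\mu_X\colon s(X)\to X$. Naturality of $\mu$ then follows from naturality of $\theta$ together with faithfulness of $G$.

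There is no real obstacle here. The only point requiring care is making sure each whiskering is on the correct side and that the maps go in the right direction; everything else is formal 2-categorical bookkeeping.
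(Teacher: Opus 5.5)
Your proposal is correct and follows essentially the same route as the paper. You combine $G\circ s\cong s'\circ G\cong G$ and then cancel $G$ with a quasi-inverse; you make the whiskerings and the conjugation by $\eta$ explicit where the paper writes the chain $s\cong G^{-1}Gs\cong G^{-1}G\cong\text{id}_{\mathcal C}$, and your alternative via full faithfulness of $G$ is also valid.
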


\begin{proof}
Let $G^{-1}$ be a quasi-inverse of $G$. Since $G\circ s\cong s'\circ
G\cong G$, we obtain that
\[
  s\;\cong\;G^{-1}\circ G\circ s\;\cong\;G^{-1}\circ G\;\cong\;\text{id}_{\mathcal C}. 
\]\end{proof}

\begin{example}\label{ex:Cm}
Let $E=C_m$ be the graph of Example \ref{C_m} with vertices
$v_1,\dots,v_m$ and edges $e_i\colon v_i\to v_{i+1}$, where the
indices are taken modulo $m$, and let $\sigma$ be the rotation
$v_i\mapsto v_{i+1}$, $e_i\mapsto e_{i+1}$.

Paths in $C_m$ are determined by their source and length, so
$\LE_0=\bigoplus_i Kv_i\cong K^m$. The element $t=\sum_i e_i$ is a
unit of degree $1$, with $t^{-1}=\sum_i e_i^{*}$. A direct computation
gives
\[
  t\,v_i\,t^{-1}=e_{i-1}e_{i-1}^{*}=v_{i-1} .
\]
Hence $\alpha=\rho$, the cyclic permutation $v_i\mapsto v_{i-1}$ of
$K^m$, and $\sigma_0=\rho^{-1}$.

By Proposition \ref{prop:degree-one-unit}, we have
\[
  \LE\cong K^m[t,t^{-1};\rho],
  \qquad
  \LE^{\sigma}\cong K^m[u,u^{-1}]=K[u,u^{-1}]^m .
\]
Moreover,
\begin{equation}\label{eq:two-triples}
  \proj\LE\simeq\big(\mathsf{Mod}\text{-}K^m,\ K^m,\ \rho^{*}\big),
  \qquad
  \proj\LE^{\sigma}\simeq\big(\mathsf{Mod}\text{-}K^m,\ K^m,\ \text{id}\big).
\end{equation}

For $1\le i\le m$ let $S_i$ denote the simple right $K^m$-module on
which $v_i$ acts as the identity and $v_j$ acts as $0$ for $j\neq i$.
These are pairwise non-isomorphic, and they exhaust the simple
$K^m$-modules. For an automorphism $\gamma$ of $K^m$ we write
$\gamma^{*}N$ for $N$ equipped with the action $n\cdot b=n\,\gamma(b)$.

The element $v_j$ acts on $\rho^{*}S_i$ in the same way as
$\rho(v_j)=v_{j-1}$ acts on $S_i$, and the latter is the identity
precisely when $j=i+1$. Hence
\[
  \rho^{*}S_i\;\cong\;S_{i+1}\qquad(1\le i\le m),
\]
the indices again taken modulo $m$. Since $m\ge2$, we have
$S_{i+1}\not\cong S_i$, and therefore
$\rho^{*}\not\cong \text{id}_{\mathsf{Mod}\text{-}K^m}$.

By \eqref{eq:two-triples} the shift of $\proj\LE^{\sigma}$ is the
identity functor. An equivalence of the two triples is in particular
an equivalence $G$ of the underlying categories with
$G\circ\rho^{*}\cong\text{id}\circ\,G$, so Lemma~\ref{lem:rigid} would force
$\rho^{*}\cong \text{id}$, a contradiction. This shows that $\proj \LE$ and $\proj \LE^\sigma$ have the same underlying pair $(\mathcal C,\mathcal O)$, but they differ in the shift functor.
\end{example}

\begin{corollary}\label{cor:not-gr-morita}
Let $m\ge2$. Then $L_K(C_m)$ and $L_K(C_m)^{\sigma}$ are
$\Gr$-equivalent but not graded Morita equivalent.
\end{corollary}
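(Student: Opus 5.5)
The statement has two parts, and I would prove them separately.

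\emph{$\Gr$-equivalence.} This is immediate from Zhang's theorem \cite{Zhang96}: the functor $F\colon \Gr\text{-}L_K(C_m)\to \Gr\text{-}L_K(C_m)^{\sigma}$ recalled before Theorem~\ref{thm:proj-twist} is an equivalence. To stay inside the paper's framework, I would instead use that $L_K(C_m)$ is strongly graded, since $C_m$ is a finite graph without sinks. Theorem~\ref{thm:proj-twist}(3) then shows that $L_K(C_m)^{\sigma}$ is strongly graded with the same degree-zero part $K^m$. Dade's theorem gives
\[
\Gr\text{-}L_K(C_m)\simeq \mathsf{Mod}\text{-}K^m\simeq \Gr\text{-}L_K(C_m)^{\sigma}.
\]

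\emph{Failure of graded Morita equivalence.} I would take ``graded Morita equivalent'' in the standard sense: there is an equivalence $\Gr\text{-}A\to\Gr\text{-}B$ commuting with the shift functors $(1)$ up to natural isomorphism. Equivalently, it is given by a graded progenerator, and such equivalences automatically commute with shifts. Suppose such an equivalence $\Phi$ existed between $A=L_K(C_m)$ and $B=L_K(C_m)^{\sigma}$. By Proposition~\ref{prop:strong} both $\Tors_I$ classes vanish, so $\mathsf{QGr}_I=\Gr$ on both sides. Transporting along the Dade equivalences $M\mapsto M_0$, $\Phi$ becomes an autoequivalence $G$ of $\mathsf{Mod}\text{-}K^m$. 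By \eqref{eq:two-triples} in Example~\ref{ex:Cm}, $G$ satisfies $G\circ\rho^{*}\cong \text{id}\circ G$. Lemma~\ref{lem:rigid} then forces $\rho^{*}\cong\text{id}$. This contradicts $\rho^{*}S_i\cong S_{i+1}\not\cong S_i$ for $m\ge2$, as computed in Example~\ref{ex:Cm}.

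The argument in Example~\ref{ex:Cm} already rules out equivalences of triples; the extra content here is that we drop the requirement $G(\mathcal O)\cong\mathcal O'$, and Lemma~\ref{lem:rigid} does not use it. So the only real step is checking that the Dade identification intertwines the degree shift on $\Gr$ with $-\otimes_{A_0}A_1$, and hence with $\rho^{*}$ and $\text{id}$ respectively, via Proposition~\ref{prop:degree-one-unit}. That compatibility is exactly what Propositions~\ref{prop:strong} and~\ref{prop:degree-one-unit} establish.

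The main obstacle is definitional: making sure the notion of graded Morita equivalence used is one that forces commutation with the shift. If it is defined via graded progenerators (or via graded equivalences in the sense of Gordon--Green), I would add a short remark. Such a functor is $-\otimes_A P$ for a graded bimodule $P$, and $(M\otimes_A P)(1)\cong M(1)\otimes_A P$ naturally, so commutation with the shift comes for free.
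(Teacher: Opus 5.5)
Your proposal is correct and follows essentially the same route as the paper. You obtain the $\Gr$-equivalence from Zhang's theorem, or from strong gradedness plus Dade's theorem via Theorem~\ref{thm:proj-twist} and Proposition~\ref{prop:strong}; you rule out graded Morita equivalence by transporting a shift-compatible equivalence to $\mathsf{Mod}\text{-}K^m$, getting $G\circ\rho^{*}\cong G$, and applying Lemma~\ref{lem:rigid} against $\rho^{*}S_i\cong S_{i+1}\not\cong S_i$. Your remarks that the shift-commutation comes from the definition of graded Morita equivalence and that $G(\mathcal O)\cong\mathcal O'$ is not needed are accurate, and they only make explicit what the paper uses implicitly.
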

\begin{proof}
The $\Gr$-equivalence follows from Zhang's theorem \cite{Zhang96}, or alternatively from Theorem \ref{thm:proj-twist}(2) together with Proposition \ref{prop:strong}, since both algebras are strongly graded.

Recall that two graded algebras are graded Morita equivalent if and only if their graded module categories are equivalent via an equivalence that commutes with the degree-shift functor.

Suppose, for contradiction, that there exists such an equivalence $$H\colon \mathsf{Gr}\text{-}L_K(C_m)\to
\mathsf{Gr}\text{-}L_K(C_m)^{\sigma}.$$  Transporting $H$
through the equivalences in Proposition \ref{prop:strong} would yield an equivalence
$$G: \mathsf{Mod}\text{-}K^m \longrightarrow \mathsf{Mod}\text{-}K^m$$ such that $G\circ\rho^{*}\cong \text{id}\circ\,G$, which is
impossible. Hence $L_K(C_m)$ and $L_K(C_m)^{\sigma}$ are not graded Morita equivalent.
\end{proof}

\subsection*{Connection with the work of Smith in \texorpdfstring{\cite{S}}{Smith}}

\noindent For any finite graph $E= (E^0, E^1)$ we denote by $A_E$ the adjacency matrix of $E$. More precisely, if $E^0 = \{1, 2, \ldots, n\}$, then $A_E = ((A_E)_{i,j})$ is the $n\times n$ matrix whose $(i, j)$-entry $(A_E)_{i,j}$ is the number of edges $e$ in $E^1$ such that $s(e) = i$ and $r(e)= j$. 
Let $P=[P_{i,j}]\in M_{n}(\mathbb{N})$
be the permutation matrix associated to $\sigma_0$, defined by $$P_{i,j}=\delta_{j,\sigma_0(i)}$$ for all $1\le i, j\le n$. 

The following fact describes the relationship between the adjacency matrices of $E$ and $E_{\sigma}$.

\begin{proposition}\label{prop:adjacency}
Let $E$ be a finite graph and let $\sigma$ be an automorphism of
$E$, and let $m$ be the order of $\sigma_0$. Then
\[
A_{E_\sigma}=A_E\,P^{-1},\qquad PA_EP^{-1}=A_E ,
\]
so $A_E$ and $P$ commute and consequently
\[
A_{E_\sigma}^{\,k}=A_E^{\,k}\,P^{-k}\quad (k\ge 0),
\qquad\text{and in particular}\qquad
A_{E_\sigma}^{\,m}=A_E^{\,m}.
\]
\end{proposition}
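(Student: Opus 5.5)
We prove the two matrix identities by computing entries directly from the definitions, and then deduce the power formulas by induction.

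For the first identity, fix vertices $i,j$. By Definition \ref{def:twisted-quiver}, $(A_{E_\sigma})_{i,j}$ is the number of edges $e$ with $s(e)=i$ and $r_\sigma(e)=\sigma_0^{-1}(r(e))=j$. The condition $r_\sigma(e)=j$ is the same as $r(e)=\sigma_0(j)$. Hence
\[
(A_{E_\sigma})_{i,j}=(A_E)_{i,\sigma_0(j)}.
\]
Next we compute $(A_EP^{-1})_{i,j}$. Since $P$ is a permutation matrix, $P^{-1}=P^{T}$, so $(P^{-1})_{k,j}=P_{j,k}=\delta_{k,\sigma_0(j)}$. Therefore
\[
(A_EP^{-1})_{i,j}=\sum_k (A_E)_{i,k}\,\delta_{k,\sigma_0(j)}=(A_E)_{i,\sigma_0(j)},
\]
and the two entries agree.

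For the second identity, $(PA_EP^{-1})_{i,j}=\sum_{k,l}\delta_{k,\sigma_0(i)}(A_E)_{k,l}\delta_{l,\sigma_0(j)}=(A_E)_{\sigma_0(i),\sigma_0(j)}$. The bijection $\sigma_1$ maps the edges from $i$ to $j$ onto the edges from $\sigma_0(i)$ to $\sigma_0(j)$, because $s\circ\sigma_1=\sigma_0\circ s$ and $r\circ\sigma_1=\sigma_0\circ r$; its inverse $\sigma_1^{-1}$ maps them back by the same relations. So the two counts are equal, giving $(A_E)_{\sigma_0(i),\sigma_0(j)}=(A_E)_{i,j}$. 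Thus $PA_EP^{-1}=A_E$, which says exactly that $A_E$ and $P$ commute, and hence $A_E$ also commutes with $P^{-1}$.

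Finally, we show $A_{E_\sigma}^{k}=A_E^kP^{-k}$ by induction on $k$. The case $k=0$ is trivial. For the inductive step,
\[
A_{E_\sigma}^{k+1}=A_E^kP^{-k}A_EP^{-1}=A_E^{k+1}P^{-(k+1)},
\]
where we move $A_E$ past $P^{-k}$ using the commutation just proved. Since $\sigma_0$ has order $m$, the matrix $P$ has order $m$; this holds because $\sigma\mapsto P$ is a (possibly anti-) homomorphism from permutations to matrices. Hence $P^{-m}=I$ and $A_{E_\sigma}^m=A_E^m$.

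The only delicate point is the index convention $P_{i,j}=\delta_{j,\sigma_0(i)}$. It determines whether $P^{-1}$ or $P$ appears in the first identity, so we check it carefully through the transpose, as above. Everything else is routine.
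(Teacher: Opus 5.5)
Your proof is correct and follows essentially the same route as the paper: it identifies $(A_{E_\sigma})_{i,j}=(A_E)_{i,\sigma_0(j)}=(A_EP^{-1})_{i,j}$, uses the restriction of $\sigma_1$ to a bijection from the edges $i\to j$ onto the edges $\sigma_0(i)\to\sigma_0(j)$ to get $PA_E=A_EP$, and then commutes factors and uses $P^m=I$. Your explicit check of the index convention through $P^{-1}=P^{T}$ and the (anti-)homomorphism $\sigma_0\mapsto P$ fills in details the paper leaves implicit; note that only $P^m=P_{\sigma_0^m}=I$ is needed here, not that the order of $P$ is exactly $m$.
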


\begin{proof}
Since $r_\sigma(e)= j$ if and only if $r(e)=\sigma_0(j)$, and so we obtain that
$$(A_{E_\sigma})_{i,j}=(A_E)_{i,\sigma_0(j)}=(A_EP^{-1})_{i,j}.$$
Moreover, since $\sigma_1$ restricts to a bijection from the set of edges $i\to j$ onto the set of edges
$\sigma_0(i)\to\sigma_0(j)$, we have $$(A_E)_{\sigma_0(i),\sigma_0(j)}=(A_E)_{i,j}$$ for all
$i,j$. This is precisely the matrix identity $$PA_E=A_EP.$$ Consequently, 
$$A_{E_\sigma}^{\,k}=(A_EP^{-1})^k=A_E^{\,k}P^{-k}$$ for all $k\ge 0$. Since $P^m=I$, the desired final equality follows, thus completing the proof.
\end{proof}



For a graph $E = (E^0, E^1)$, let $KE$ denote its path algebra over a field $K$, equipped with the $\mathbb{N}$-grading by path length. The path algebra $KE$ is also considered as a $\mathbb{Z}$-graded $K$-algebra  by setting  $(KE)_n = 0$ for all $n < 0$. If $E$ is finite, then Smith proved in \cite[Theorems 1.3 and 1.8]{S} that
\[
\mathsf{QGr}\text{-}KE\;\equiv\;\mathsf{Mod}\text{-}S(E)\;\equiv\;\mathsf{Gr}\text{-}L_K(E^{\circ})
\;\equiv\;\mathsf{Mod}\text{-}L_K(E^{\circ})_0\;\equiv\;\mathsf{QGr}\text{-} KE^{(n)}
\tag{4}
\] for all $n\ge 1$, where $S(E)=\varinjlim\operatorname{End}_{KE^0}\bigl((KE^1)^{\otimes n}\bigr)$ is a direct limit
of finite dimensional semisimple algebras, $E^{\circ}$ is the graph without sinks or sources obtained from $E$ by repeatedly deleting all sinks and all sources, and $E^{(n)}$ is the graph with vertex set $E^0$ whose adjacency matrix is the $n$-th power of that of $E$. Thus, provided $E= E^{\circ}$, $\mathsf{Gr}\text{-}L_K(E)$ is itself a noncommutative $\operatorname{Proj}$, namely  the noncommutative 
$\operatorname{Proj}$ of the path algebra $KE$.

It is worth observing that the twist in Theorem \ref{thm:twist-is-LPA} is compatible with every term in $(4)$. First, it commutes with the passage from $E$ to $E^{\circ}$.

\begin{lemma}\label{lem:circ-commutes}
Let $E$ be a finite graph and $\sigma$ an automorphism of $E$. Then $E$ and $E_\sigma$ have
the same sinks and the same sources, and
\[
(E_\sigma)^{\circ}=\bigl(E^{\circ}\bigr)_{\sigma},
\]
where $\sigma$ also denotes the induced automorphism of $E^{\circ}$.
\end{lemma}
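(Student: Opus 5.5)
The plan is to compare degrees in $E$ and $E_\sigma$ and then run the sink/source deletion procedure in parallel in both graphs. Since $E_\sigma$ has the same vertices, edges and source map as $E$, the out-degree of every vertex is unchanged. Hence $v$ is a sink in $E_\sigma$ if and only if it is a sink in $E$. For in-degrees, $r_\sigma^{-1}(v)=\{e: r(e)=\sigma_0(v)\}=r^{-1}(\sigma_0(v))$. So $v$ is a source in $E_\sigma$ if and only if $\sigma_0(v)$ is a source in $E$. Because $\sigma$ is an automorphism, $\sigma_1$ restricts to a bijection $r^{-1}(v)\to r^{-1}(\sigma_0(v))$, so $\sigma_0(v)$ is a source of $E$ exactly when $v$ is. This gives the first claim: $E$ and $E_\sigma$ have the same sinks and the same sources. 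In particular, the set of sinks and the set of sources of $E$ are each $\sigma_0$-stable.

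For the second claim, I would describe $E^{\circ}$ as the last term of a finite sequence $E=F_0\supseteq F_1\supseteq\cdots\supseteq F_N=E^{\circ}$. Here $F_{k+1}$ is obtained from $F_k$ by deleting all sinks and all sources of $F_k$, together with every edge incident to a deleted vertex; the sequence stabilizes because $E$ is finite. I will show by induction on $k$ that each $F_k$ is a $\sigma$-invariant subgraph of $E$ and that $(E_\sigma)$'s $k$-th stage equals $(F_k)_\sigma$. For the base case, $(F_0)_\sigma=E_\sigma$.

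For the inductive step, suppose $F_k$ is $\sigma$-invariant, meaning $\sigma_0(F_k^0)=F_k^0$ and $\sigma_1(F_k^1)=F_k^1$. Then $\sigma$ restricts to an automorphism of $F_k$. Its twisted graph $(F_k)_\sigma$ is exactly the subgraph of $E_\sigma$ on the vertices $F_k^0$ and edges $F_k^1$: for $e\in F_k^1$ we have $r_\sigma(e)=\sigma_0^{-1}(r(e))\in F_k^0$, so the range map is compatible with restriction. Applying the first paragraph to $F_k$ with its restricted automorphism, $(F_k)_\sigma$ and $F_k$ have the same sinks and sources. Therefore one deletion step removes the same vertex set $D_k$ from both. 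The edges removed are those with source or range in $D_k$. In $F_k$ these are the edges with $s(e)\in D_k$ or $r(e)\in D_k$. In $(F_k)_\sigma$ they are the edges with $s(e)\in D_k$ or $\sigma_0^{-1}(r(e))\in D_k$. These two conditions agree because $D_k$ is $\sigma_0$-stable: $D_k$ consists of the sinks and sources of the $\sigma$-invariant graph $F_k$, which $\sigma$ permutes. Hence the next stage of $E_\sigma$ is $(F_{k+1})_\sigma$. Moreover $F_{k+1}$ is again $\sigma$-invariant, since $\sigma_0$ fixes $D_k$ setwise, and so $\sigma_1$ permutes the edges incident to $D_k$.

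At the terminal stage, the first paragraph shows that $F_N$ has no sinks or sources if and only if $(F_N)_\sigma$ has none. So both sequences stop at the same index, giving $(E_\sigma)^\circ=(E^\circ)_\sigma$ with $\sigma$ restricted to $E^\circ$. The main point to handle carefully is the $\sigma$-invariance of every intermediate graph. Without it, $\sigma$ would not restrict to an automorphism of $F_k$, and the twisted range map on edges could leave $F_k^0$. I expect that to be the only real obstacle, and it is settled by the induction above.
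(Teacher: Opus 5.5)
Your proposal is correct and follows essentially the same route as the paper. Both arguments get the same sinks from the common source map, and the same sources from $r_\sigma^{-1}(v)=r^{-1}(\sigma_0(v))$ together with $\sigma_0$-invariance. Both then induct on the deletion stages, using that the deleted set is $\sigma_0$-stable so that the conditions $r(e)\notin D_k$ and $\sigma_0^{-1}(r(e))\notin D_k$ agree. You additionally make explicit the $\sigma$-invariance of each intermediate graph $F_k$ and the fact that $r_\sigma$ maps $F_k^1$ into $F_k^0$; the paper leaves these points implicit.
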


\begin{proof}
The graphs $E$ and $E_\sigma$ have the same source map and, hence, have the same sinks. A vertex $w$ receives an edge in $E_{\sigma}$ if and only if $\sigma_0(w)$ receives an edge in
$E$. Since $\sigma$ is an automorphism of $E$, the set of sources of $E$ is $\sigma_0$-invariant. Therefore, $w$ is a source of $E_{\sigma}$ if and only if it is a source of $E$.
Hence, the set $S$ of vertices deleted at the first stage is the same for $E$ and for
$E_{\sigma}$, and $S$ is $\sigma_0$-invariant. An edge $e$ survives in $E$ if and only if
\begin{center}
$s(e)\notin S$ and $r(e)\notin S$,    
\end{center}
whereas it survives in $E_{\sigma}$ if and only if 
\begin{center}
$s(e)\notin S$ and $r_{\sigma}(e)=\sigma_0^{-1}r(e)\notin S$.  
\end{center}
These conditions are equivalent because $S$ is
$\sigma_0$-invariant. Thus, the two full subgraphs obtained after one stage are again related by the twist along the restricted automorphism, and the claim follows by induction on the number of stages.
\end{proof}

For a field $K$ and a graph $E$, the path algebra $KE$ has  $\text{Path}(E)$ as a $K$-base, and is a $\mathbb{Z}$-graded subalgebra of the Leavitt path algebra $L_K(E)$ (see, e.g., \cite[Lemma 1.6]{G}). This observation, together with Lemma \ref{lem:graphaut}, shows that every automorphism $\sigma$ of $E$ induces a $\mathbb{N}$-graded automorphism of $KE$. Moreover, as a consequence  of Theorem \ref{thm:twist-is-LPA}, we obtain the following fact.

\begin{proposition}\label{prop:path-algebra-twist}
Let $K$ be a field, $E$ be a graph, and $\sigma$ an automorphism of $E$, with induced graded automorphism of $KE$ also written $\sigma$. Then
\[
(KE)^{\sigma}\;\cong\;KE_{\sigma}
\]
as $\mathbb N$-graded $K$-algebras.
\end{proposition}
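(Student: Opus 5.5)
The plan is to restrict the isomorphism $\Theta\colon L_K(E_\sigma)\to L_K(E)^{\sigma}$ of Theorem~\ref{thm:twist-is-LPA} to the path algebras. Alternatively, if one wants to avoid Leavitt path algebras entirely (for instance when $E$ has infinite emitters), I would construct the map directly via the universal property of the path algebra $KE_\sigma$. That algebra is generated by $E^0\cup E^1$ subject only to relations (1) and (2) for edges, without ghost edges or (3)--(4).

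First I would note that $(KE)^\sigma$ is a subalgebra of $L_K(E)^\sigma$. Indeed, $KE$ is a $\sigma$-stable graded subalgebra of $L_K(E)$: by Lemma~\ref{lem:graphaut}, $\sigma$ maps paths to paths. Hence for homogeneous $a\in (KE)_n$ and $b\in KE$, the product $a\star b=a\sigma^n(b)$ again lies in $KE$. So $(KE)^\sigma$ is precisely the $\star$-subalgebra of $L_K(E)^\sigma$ with underlying space $KE$, and it is $\mathbb N$-graded.

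Next I would define $\Theta_0\colon KE_\sigma\to (KE)^\sigma$ by $v\mapsto v$ and $e\mapsto e$. The relations to check are exactly the vertex and real-edge computations already done in the proof of Theorem~\ref{thm:twist-is-LPA}: namely $Q_v\star Q_w=\delta_{v,w}Q_w$, $Q_{s(e)}\star A_e=A_e$, and $A_e\star Q_{r_\sigma(e)}=e\,\sigma(\sigma_0^{-1}r(e))=e$. The map $\Theta_0$ is degree-preserving. By formula (1) in that proof with $m=0$, a path $e_1\cdots e_k$ of $E_\sigma$ is sent to $e_1\sigma_1(e_2)\cdots\sigma_1^{k-1}(e_k)$, which is a path of $E$ by Lemma~\ref{lm:twsist-path}.

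Then I would prove bijectivity directly on bases, which avoids any uniqueness theorem. The assignment $(e_1,\dots,e_k)\mapsto(e_1,\sigma_1(e_2),\dots,\sigma_1^{k-1}(e_k))$ is a bijection from paths of length $k$ in $E_\sigma$ to paths of length $k$ in $E$. Its inverse is $\alpha_i\mapsto\sigma_1^{-(i-1)}(\alpha_i)$, and Lemma~\ref{lm:twsist-path} gives well-definedness in both directions; on vertices the map is the identity. Since $\mathrm{Path}(E_\sigma)$ and $\mathrm{Path}(E)$ are $K$-bases of $KE_\sigma$ and $KE$, the map $\Theta_0$ sends a basis bijectively onto a basis and is therefore a linear isomorphism, hence a graded algebra isomorphism.

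The only real subtlety, which I expect to be the main point needing care, is verifying the path-formula $\Theta_0(e_1\cdots e_k)=e_1\sigma_1(e_2)\cdots\sigma_1^{k-1}(e_k)$ inside $(KE)^\sigma$ rather than inside $L_K(E)^\sigma$. I would redo the short induction on $k$ using $a\star b=a\sigma^{|a|}(b)$, which takes place entirely within $KE$. Everything else is bookkeeping.
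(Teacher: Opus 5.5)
Your proof is correct and follows the paper's route: restrict $\Theta$ to $KE_\sigma$, and use formula (1) (with no ghost edges) together with Lemma~\ref{lm:twsist-path} to get a bijection $\mathrm{Path}(E_\sigma)\to\mathrm{Path}(E)$ between $K$-bases. The only differences are cosmetic: you obtain injectivity from the basis bijection, or build the map from the path algebra's universal property, instead of inheriting it from $\Theta$ via the Graded Uniqueness Theorem. Also, your parenthetical about infinite emitters is unnecessary, because Theorem~\ref{thm:twist-is-LPA} already holds for arbitrary graphs.
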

\begin{proof}
By the proof of Theorem \ref{thm:twist-is-LPA}, there is a unique $\mathbb{Z}$-graded $K$-algebra isomorphism $$\Theta\colon L_\K(E_{\sigma})\longrightarrow L_K(E)^{\sigma}$$ such that 
\begin{center}
$\Theta(v)=v$,\quad $\Theta(e)=e$,\quad and \quad $\Theta(e^*)= (\sigma_1^{-1}(e))^*$   
\end{center}
for all $v\in E^0$ and $e\in E^1$. Moreover, as shown in the proof of Theorem \ref{thm:twist-is-LPA}, for every path $\alpha$ in $E$, there exists a path $p$ in $E_{\sigma}$ such that $\Theta(p) = \alpha$. Since $\text{Path}(E_{\sigma})$ and 
$\text{Path}(E)$ are $K$-bases of $KE_{\sigma}$ and $KE$, respectively, these observations show that the restriction $$\Theta|_{KE_{\sigma}}: KE_{\sigma} \longrightarrow (KE)^{\sigma}$$ is an $\mathbb{N}$-graded $K$-algebra isomorphism, thus completing the proof.
\end{proof}

Combining these observations with $(4)$ yields an independent derivation of the categorical consequence of Theorem \ref{thm:twist-is-LPA}, without any appeal to Zhang's theorem as established in \cite{Zhang96}.

\begin{corollary}\label{cor:smith-dictionary}
Let $K$ be a field, $E$ a finite graph with no sinks and no sources, and $\sigma$ an automorphism of $E$. Then    
\[
\mathsf{Gr}\text{-}L_K(E)\;\equiv\;\mathsf{QGr}\text{-}(KE)\equiv\;\mathsf{QGr}\text{-}KE_{\sigma}
\;\equiv\;\mathsf{Gr}\text{-}L_K(E_{\sigma}).
\]
\end{corollary}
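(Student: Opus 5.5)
The plan is to chain four equivalences, each coming from a result already available, with no appeal to Zhang's theorem.

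\textbf{Step 1 (the outer equivalences).} Since $E$ is finite with no sinks and no sources, $E^{\circ}=E$. Applying $(4)$ gives $\mathsf{Gr}\text{-}L_K(E)\equiv\mathsf{QGr}\text{-}KE$. By Lemma \ref{lem:circ-commutes}, $E_\sigma$ has the same sinks and sources as $E$, hence none. Also $E_\sigma$ is finite, being built on the same vertex and edge sets. So $(E_\sigma)^{\circ}=E_\sigma$, and $(4)$ applied to $E_\sigma$ gives $\mathsf{QGr}\text{-}KE_\sigma\equiv\mathsf{Gr}\text{-}L_K(E_\sigma)$.

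\textbf{Step 2 (the middle equivalence).} It remains to show $\mathsf{QGr}\text{-}KE\equiv\mathsf{QGr}\text{-}KE_\sigma$. I see two ways to do this.

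\emph{First route.} Proposition \ref{prop:path-algebra-twist} gives $KE_\sigma\cong(KE)^\sigma$ as $\mathbb N$-graded algebras. I would then check directly that Zhang's functor $F$ (twisting the action by $\sigma^n$ on degree $n$) is an isomorphism of categories $\mathsf{Gr}\text{-}KE\to\mathsf{Gr}\text{-}(KE)^\sigma$, with inverse given by twisting by $\sigma^{-n}$. Next I would show $F$ preserves the Artin--Zhang (Smith) torsion class. The point is that $m\star (KE)_{\ge n}=m\,\sigma^{k}((KE)_{\ge n})=m(KE)_{\ge n}$, because $\sigma$ preserves each graded piece. So $F$ descends to the quotient categories. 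This is essentially the argument of Theorem \ref{thm:proj-twist}(1)--(2), restricted to the ideal $(KE)_{\ge1}$. That argument is elementary and does not rely on Zhang's theorem, so the claim of independence is honest.

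\emph{Second route (cleaner).} Use the last term of $(4)$, $\mathsf{QGr}\text{-}KE\equiv\mathsf{QGr}\text{-}KE^{(n)}$, with $n=m$ the order of $\sigma_0$. Proposition \ref{prop:adjacency} gives $A_{E_\sigma}^{m}=A_E^{m}$. Hence $E^{(m)}$ and $(E_\sigma)^{(m)}$ are the same graph, since both have vertex set $E^0$ and are determined by their adjacency matrix. Then
\[
\mathsf{QGr}\text{-}KE\equiv\mathsf{QGr}\text{-}KE^{(m)}=\mathsf{QGr}\text{-}K(E_\sigma)^{(m)}\equiv\mathsf{QGr}\text{-}KE_\sigma .
\]
I would present this second route as the main argument and mention the first as a remark.

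\textbf{Main obstacle.} The delicate point is the applicability of $(4)$. Smith's $\mathsf{QGr}$ uses the torsion class of locally finite-dimensional modules (or the $(KE)_{\ge1}$-torsion class) for a path algebra that need not be noetherian. One must make sure the finiteness and no-sinks/no-sources hypotheses are exactly what \cite{S} requires for both $E$ and $E_\sigma$. Lemma \ref{lem:circ-commutes} handles this transfer. In the second route, one must also check that $E^{(m)}$ depends only on the adjacency matrix, so that equality of matrices yields literally the same graph up to isomorphism. With those checks, the composite gives the stated chain.
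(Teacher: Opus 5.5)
Your main argument (the second route) is exactly the paper's proof. Lemma \ref{lem:circ-commutes} gives $(E_\sigma)^{\circ}=E_\sigma$, Proposition \ref{prop:adjacency} gives $A_{E_\sigma}^{m}=A_E^{m}$ and hence $E^{(m)}\cong(E_\sigma)^{(m)}$, and $(4)$ is applied to both $E$ and $E_\sigma$ to close the chain; your first route, via Proposition \ref{prop:path-algebra-twist} and the twisting functor preserving the $(KE)_{\ge 1}$-torsion class, is also sound (an $\mathbb{N}$-graded analogue of Theorem \ref{thm:proj-twist}(1)--(2)) but is not needed.
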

\begin{proof}
Since $E$ is a finite finite with no sinks and no sources, it follows that $E^{\circ}=E$ and, by Lemma
\ref{lem:circ-commutes}, that $(E_\sigma)^{\circ}=E_\sigma$. 
Let $m$ be the order of $\sigma_0$.
By Proposition \ref{prop:adjacency}, the adjacency matrices of the graphs $E^{(m)}$ and $(E_\sigma)^{(m)}$
are $A^{m}_E$ and $A_{E_\sigma}^{\,m}=A^{m}_E$, respectively.
Since a graph is determined up to isomorphism by its vertex set and adjacency matrix, it follows that
$$E^{(m)}\cong(E_\sigma)^{(m)}.$$
Applying $(4)$ twice then yields
\[
\mathsf{Gr}\text{-}L_K(E)\;\equiv\;\mathsf{QGr}\text{-}(KE)\;\equiv\;\mathsf{QGr}\text{-}KE^{(m)}
\;=\;\mathsf{QGr}\text{-}K(E_\sigma)^{(m)}\;\equiv\;\mathsf{QGr}\text{-}KE_{\sigma}
\;\equiv\;\mathsf{Gr}\text{-}L_K(E_{\sigma}),
\] thus finishing the proof. 
\end{proof}

\section{Invariance of ring-theoretic properties of Leavitt path algebras under twisting}\label{Invariant}

Let $E=(E^0,E^1,s,r)$ be a graph and $\sigma=(\sigma_0,\sigma_1)$  an automorphism of $E$. Since $E$ and $E_{\sigma}$ have the \emph{same} source map, $s^{-1}(v)$ is literally the same set of edges in both graphs. In particular $E$ and $E_{\sigma}$ have the same sinks, the same infinite emitters and the same regular vertices, and $|s^{-1}(v)|$ is the same  for every $v\in E^0$. What the twist changes is the range map and, consequently, the cycles. The first purpose of this section is to describe precisely how the cycles are affected.

For any positive integer $k$, Lemma \ref{lm:twsist-path} provides a map $$\Phi_k: \text{Path}(E_{\sigma})\longrightarrow \text{Path}(E)$$ defined by
\[
\Phi_k(e_1e_2\cdots e_k):= f_1f_2\cdots f_k,
\] where $f_j:=\sigma_1^{\,j-1}(e_j)$ for all $1\le j\le k$.
We let $m$ denote the order of $\sigma_0$ as a permutation of $E^0$; in particular, if $E^0$ is finite, then $m<\infty$.

\begin{lemma}\label{lem:path-endpoints}
For every $k\ge 1$ the map $\Phi_k$ is a bijection from the set of paths of length $k$
in $E_{\sigma}$ onto the set of paths of length $k$ in $E$, and for a path
$p=e_1\cdots e_k$ in $E_{\sigma}$ one has
\[
s\bigl(\Phi_k(p)\bigr)=s(p),
\qquad
r\bigl(\Phi_k(p)\bigr)=\sigma_0^{\,k}\bigl(r_\sigma(p)\bigr).
\]
\end{lemma}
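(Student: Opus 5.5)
The plan is to read off well-definedness, bijectivity and the endpoint formulas directly from Lemma \ref{lm:twsist-path}, by writing down an explicit inverse. For a sequence of edges $(e_1,\dots,e_k)$, Lemma \ref{lm:twsist-path} says that $e_1\cdots e_k$ is a path in $E_\sigma$ exactly when $f_1\cdots f_k$ is a path in $E$, where $f_j=\sigma_1^{\,j-1}(e_j)$. The forward direction shows that $\Phi_k$ lands in the paths of length $k$ in $E$.

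For bijectivity, define $\Psi_k$ on paths $g_1\cdots g_k$ of length $k$ in $E$ by
\[
\Psi_k(g_1\cdots g_k)=\sigma_1^{0}(g_1)\,\sigma_1^{-1}(g_2)\cdots\sigma_1^{-(k-1)}(g_k).
\]
Put $e_j=\sigma_1^{-(j-1)}(g_j)$. Then $\sigma_1^{\,j-1}(e_j)=g_j$, so the reverse direction of Lemma \ref{lm:twsist-path} shows that $\Psi_k$ lands in the paths of length $k$ in $E_\sigma$. Since $\sigma_1$ is a bijection, $\Psi_k\circ\Phi_k$ and $\Phi_k\circ\Psi_k$ are the identity letterwise. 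Note that $k\ge1$ is needed here only because vertices are not in the domain.

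For the endpoint formulas, the source is immediate: $s(\Phi_k(p))=s(f_1)=s(e_1)=s(p)$, since $f_1=e_1$ and $E_\sigma$ has the same source map as $E$. For the range, compute
\[
r(\Phi_k(p))=r(f_k)=r(\sigma_1^{\,k-1}(e_k))=\sigma_0^{\,k-1}(r(e_k)).
\]
This uses the identity $r\circ\sigma_1^{\,j}=\sigma_0^{\,j}\circ r$, obtained by iterating $r\circ\sigma_1=\sigma_0\circ r$. By Definition \ref{def:twisted-quiver}, $r(e_k)=\sigma_0(r_\sigma(e_k))$ and $r_\sigma(p)=r_\sigma(e_k)$. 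Hence
\[
r(\Phi_k(p))=\sigma_0^{\,k-1}\sigma_0(r_\sigma(p))=\sigma_0^{\,k}(r_\sigma(p)).
\]

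None of these steps is a real obstacle; the only care needed is in the exponent bookkeeping in the range formula. As a consistency check, take $k=1$: the formula reads $r(e)=\sigma_0(r_\sigma(e))$, which is exactly the definition of $r_\sigma$.
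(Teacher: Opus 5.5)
Your proposal is correct and follows essentially the same route as the paper: bijectivity comes from Lemma \ref{lm:twsist-path}, and the range formula comes from iterating $r\circ\sigma_1=\sigma_0\circ r$ together with $r=\sigma_0\circ r_\sigma$. Writing down the explicit inverse $\Psi_k$ is a welcome addition, since the paper only cites the lemma and leaves the inverse (which uses that $\sigma_1$ is bijective) implicit.
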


\begin{proof}
By Lemma \ref{lm:twsist-path}, $\Phi_k$ is a bijection from onto the et of paths of length $k$ in $E_{\sigma}$ onto the set of paths of length $k$ in $E$. We now determine its effect on the endpoints. Since $f_1=e_1$, it follows that
$$s(\Phi_k(p))=s(e_1)=s(p).$$ Moreover, 
\[
r\bigl(\Phi_k(p)\bigr)=r\bigl(\sigma_1^{\,k-1}(e_k)\bigr)=\sigma_0^{\,k-1}\bigl(r(e_k)\bigr)
=\sigma_0^{\,k-1}\sigma_0\bigl(r_\sigma(e_k)\bigr)=\sigma_0^{\,k}\bigl(r_\sigma(p)\bigr),
\]
where we have used $r\circ\sigma_1=\sigma_0\circ r$ repeatedly, together with $r=\sigma_0\circ r_\sigma$.
\end{proof}

Specializing to closed paths, we see that a closed path in 
$E_\sigma$ need not be a closed path in $E$; rather, it becomes a \emph{$\sigma_0$-twisted} closed path.

\begin{proposition}\label{prop:closed-paths}
Let $v\in E^0$ and $k\ge 1$. Then $\Phi_k$ restricts to a bijection
\[
\bigl\{\text{closed paths of length $k$ based at $v$ in }E_\sigma\bigr\}
\;\longleftrightarrow\;
\bigl\{\text{paths of length $k$ from $v$ to $\sigma_0^{\,k}(v)$ in }E\bigr\}.
\]
In particular, if the order of $\sigma_0$ is $m$ and $m\mid k$, then $\Phi_k$ maps the set of closed paths of length $k$ based at
$v$ in $E_\sigma$ bijectively onto the set of closed paths of length $k$ based at $v$ in $E$.
\end{proposition}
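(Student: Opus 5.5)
The plan is to deduce everything from Lemma~\ref{lem:path-endpoints}. That lemma already says $\Phi_k$ is a bijection from paths of length $k$ in $E_\sigma$ onto paths of length $k$ in $E$, and it gives the endpoint formulas $s(\Phi_k(p))=s(p)$ and $r(\Phi_k(p))=\sigma_0^{\,k}(r_\sigma(p))$. A bijection restricts to a bijection between any subset and its image. So it suffices to show that the image of the set of closed paths of length $k$ based at $v$ in $E_\sigma$ is exactly the set of paths of length $k$ from $v$ to $\sigma_0^{\,k}(v)$ in $E$.

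For the forward inclusion, let $p$ be a closed path of length $k$ in $E_\sigma$ based at $v$, so $s(p)=v=r_\sigma(p)$. The endpoint formulas give $s(\Phi_k(p))=v$ and $r(\Phi_k(p))=\sigma_0^{\,k}(v)$, so $\Phi_k(p)$ lies in the target set.

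For the reverse inclusion, let $q$ be a path of length $k$ in $E$ from $v$ to $\sigma_0^{\,k}(v)$. By surjectivity of $\Phi_k$ there is a path $p$ of length $k$ in $E_\sigma$ with $\Phi_k(p)=q$. Then $s(p)=s(q)=v$, and $\sigma_0^{\,k}(r_\sigma(p))=r(q)=\sigma_0^{\,k}(v)$. Since $\sigma_0$ is a bijection, so is $\sigma_0^{\,k}$, and hence $r_\sigma(p)=v$. Thus $p$ is a closed path based at $v$ in $E_\sigma$. Injectivity on the subset is inherited from $\Phi_k$.

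For the final assertion, if $m\mid k$ then $\sigma_0^{\,k}=(\sigma_0^{\,m})^{k/m}=\mathrm{id}$. So $\sigma_0^{\,k}(v)=v$, and the target set is precisely the set of closed paths of length $k$ based at $v$ in $E$.

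There is no real obstacle. The only care needed is invoking injectivity of $\sigma_0^{\,k}$ to recover $r_\sigma(p)=v$, and noting that when $m=\infty$ the hypothesis $m\mid k$ is vacuous.
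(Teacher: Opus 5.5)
Your proposal is correct and takes essentially the same route as the paper. Both deduce the restricted bijection directly from the endpoint formulas of Lemma~\ref{lem:path-endpoints}, and both get the final assertion from $\sigma_0^{\,k}=\mathrm{id}$ when $m\mid k$; your explicit use of the surjectivity of $\Phi_k$ and the injectivity of $\sigma_0^{\,k}$ simply spells out the ``if and only if'' that the paper states in one line.
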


\begin{proof}
It follows immediately from Lemma~\ref{lem:path-endpoints} that $s(p)=v=r_\sigma(p)$ holds if and only if
\begin{center}
$s(\Phi_k(p))=v$ and $r(\Phi_k(p))=\sigma_0^{\,k}(v)$.    
\end{center}
The final assertion follows when
$\sigma_0^{\,k}=\mathrm{id}$, thus finishing the proof.
\end{proof}

By contrast, neither the number of cycles nor their individual lengths is preserved. Example~\ref{ex:C2} already shows that a single cycle of length $2$ can become two loops.

\begin{example}\label{ex:Cm-rotation}
Let $C_m$ be the graph of Example~\ref{C_m} with vertices $v_1,\dots ,v_m$ and
edges $e_i\colon v_i\to v_{i+1}$, where the indices are taken modulo $m$, and let $\sigma$ be the rotation
$v_i\mapsto v_{i+1}$, $e_i\mapsto e_{i+1}$, so that $\sigma_0$ has order exactly $m$. Then
\[
r_\sigma(e_i)=\sigma_0^{-1}(v_{i+1})=v_i=s(e_i),
\]
so $(C_m)_\sigma=\bigsqcup_{i=1}^{m}R_1$ and, by Theorem~\ref{thm:twist-is-LPA},
\[
L_K(C_m)^\sigma\cong K[x,x^{-1}]^{\,m},
\qquad\text{whereas}\qquad
L_K(C_m)\cong M_m\bigl(K[x,x^{-1}]\bigr).
\]
On the level of adjacency matrices, 
 $A_{C_m}=P$ is the cyclic permutation matrix, while
$A_{(C_m)_\sigma}=PP^{-1}=I$. In agreement with Proposition~\ref{prop:adjacency},
$A_{(C_m)_\sigma}^{\,m}=I=A_{C_m}^{\,m}$. The single cycle of length $m$ in $C_m$ has been
replaced by $m$ loops, but the $m$-th powers of the adjacency matrices, and hence the
counts of closed paths of length divisible by $m$ agree, in accordance with
Proposition~\ref{prop:closed-paths}. The case $m=2$ is Example~\ref{ex:C2}.
\end{example}


\begin{lemma}\label{lem:symmetry}
Let $E$ be a graph and $\sigma$ an automorphism of $E$. Then
$\sigma$ is an automorphism of the graph $E_{\sigma}$, and $(E_{\sigma})_{\sigma^{-1}}=E$.
\end{lemma}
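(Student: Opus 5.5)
The plan is to verify both claims directly from the definitions. The only nontrivial input is the identity $r\circ\sigma_1=\sigma_0\circ r$, which also yields $\sigma_0^{-1}\circ r=r\circ\sigma_1^{-1}$, so that $\sigma_0$ and $\sigma_1$ commute appropriately with $r_\sigma$.

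For the first claim, recall that $E_\sigma=(E^0,E^1,s,r_\sigma)$ with $r_\sigma=\sigma_0^{-1}\circ r$. We must check that $\sigma=(\sigma_0,\sigma_1)$ intertwines the source and range maps of $E_\sigma$. The source map is unchanged, so $s\circ\sigma_1=\sigma_0\circ s$ holds because $\sigma$ is an automorphism of $E$. For the range map we compute
\[
r_\sigma\circ\sigma_1=\sigma_0^{-1}\circ r\circ\sigma_1=\sigma_0^{-1}\circ\sigma_0\circ r=r=\sigma_0\circ\sigma_0^{-1}\circ r=\sigma_0\circ r_\sigma .
\]
Since $\sigma_0,\sigma_1$ are bijections, $\sigma$ is an automorphism of $E_\sigma$. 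It then follows formally that $\sigma^{-1}=(\sigma_0^{-1},\sigma_1^{-1})$ is also an automorphism of $E_\sigma$, so the twisted graph $(E_\sigma)_{\sigma^{-1}}$ of Definition~\ref{def:twisted-quiver} makes sense.

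For the second claim, we apply Definition~\ref{def:twisted-quiver} to the graph $E_\sigma$ and the automorphism $\sigma^{-1}$, whose vertex component is $\sigma_0^{-1}$. The resulting graph has vertex set $E^0$, edge set $E^1$, source map $s$, and range map
\[
(\sigma_0^{-1})^{-1}\circ r_\sigma=\sigma_0\circ\sigma_0^{-1}\circ r=r .
\]
Hence $(E_\sigma)_{\sigma^{-1}}=(E^0,E^1,s,r)=E$ on the nose, not merely up to isomorphism.

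There is no genuine obstacle here. The only point needing care is a bookkeeping one: the twist of $E_\sigma$ must be formed using the range map $r_\sigma$ of $E_\sigma$, not $r$, and with $\sigma^{-1}$ in place of $\sigma$. The first claim is exactly what guarantees that $\sigma^{-1}$ is a legitimate automorphism of $E_\sigma$, so the second twist is well defined. As a consistency check, Theorem~\ref{thm:twist-is-LPA} then gives $L_K(E_\sigma)^{\sigma^{-1}}\cong L_K(E)$, which matches the expectation that twisting by $\sigma$ and then by $\sigma^{-1}$ undoes itself.
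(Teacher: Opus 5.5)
Your proposal is correct and follows essentially the same route as the paper: you observe that the source map is unchanged, verify $r_\sigma\circ\sigma_1=r=\sigma_0\circ r_\sigma$ directly from $r\circ\sigma_1=\sigma_0\circ r$, and compute the range map of $(E_\sigma)_{\sigma^{-1}}$ as $\sigma_0\circ\sigma_0^{-1}\circ r=r$. Your added remark that $\sigma^{-1}$ is an automorphism of $E_\sigma$, so the second twist is well defined, is a point the paper leaves implicit.
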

\begin{proof}
The source maps of $E$ and $E_\sigma$ agree, so $s\circ\sigma_1=\sigma_0\circ s$ still
holds. Moreover $r_\sigma\circ\sigma_1=\sigma_0^{-1}\circ r\circ\sigma_1=r$ and
$\sigma_0\circ r_\sigma=r$, whence $r_\sigma\circ\sigma_1=\sigma_0\circ r_\sigma$. Finally
the range map of $(E_\sigma)_{\sigma^{-1}}$ is
$(\sigma_0^{-1})^{-1}\circ r_\sigma=\sigma_0\sigma_0^{-1}\circ r=r$, thus completing the proof.
\end{proof}

A graph $E$ satisfies {\it Condition $(\mathrm{K})$} if for each $v\in E^0$ which lies on a closed simple path, there exist at least two distinct closed simple paths $\alpha, \beta$ based at $v$. A graph $E$ is called a {\it no-exit graph} if no cycle in $E$ has an exit.

Using Proposition \ref{prop:closed-paths} and Lemma \ref{lem:symmetry}, we obtain the following uesul corollary.

\begin{corollary}\label{cor:acyclic}
Let $E$ be a graph such that $E^0$ is finite, and let $\sigma$ be an automorphism of $E$. Then
\begin{enumerate}
\item[$(1)$] $E$ is acyclic if and only if $E_{\sigma}$ is acyclic;
\item[$(2)$] Every cycle in $E$ has an exit if and only if  every cycle in $E_{\sigma}$ has an exit;
\item[$(3)$] $E$ is a no-exit graph if and only if   $E_{\sigma}$ is a no-exit graph;
\item[$(4)$] $E$ satisfies Condition~$(\mathrm{K})$ if and only if $E_\sigma$ satisfies Condition~$(\mathrm{K})$.
\end{enumerate}
\end{corollary}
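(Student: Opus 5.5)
Throughout, $E^0$ is finite, so $\sigma_0$ has finite order $m$.

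\emph{Reduction to one direction.} By Lemma~\ref{lem:symmetry}, $\sigma$ is an automorphism of $E_\sigma$, and hence so is $\sigma^{-1}$. Moreover $(E_\sigma)_{\sigma^{-1}}=E$. So in each of (1)--(4) it suffices to show that the property passes from $E$ to $E_\sigma$ for every automorphism; the reverse direction is the same statement applied to the pair $(E_\sigma,\sigma^{-1})$. In fact I will rephrase each property as a condition on path counts that is visibly the same for $E$ and $E_\sigma$. The tools are Lemma~\ref{lem:path-endpoints} and Proposition~\ref{prop:closed-paths}.

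\emph{Key observation.} A vertex $v$ lies on a closed path in $E_\sigma$ if and only if it does in $E$. Indeed, if $p$ is closed of length $k$ at $v$ in $E_\sigma$, then $p^m$ is closed of length $km$ at $v$, with $m\mid km$. Proposition~\ref{prop:closed-paths} then gives a closed path of length $km$ at $v$ in $E$, and the converse is the symmetric argument. Since a graph has a cycle if and only if it has a closed path, this proves (1).

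\emph{Parts (2) and (3).} I will show that $v$ lies on a cycle without exit if and only if two things hold: $v$ lies on a closed path, and for every $k\ge1$ there is exactly one path of length $k$ with source $v$.
\begin{itemize}
\item Forward direction: if $c$ is a cycle at $v$ with no exit, every path from $v$ is a prefix of $c^\infty$.
\item Converse: no vertex reachable from $v$ is a sink, since otherwise some path count would be $0$. Also none emits two edges, since otherwise some count would be at least $2$. So the unique infinite path from $v$ is forced. Because $v$ lies on a closed path, this infinite path returns to $v$ and is periodic, giving an exitless cycle through $v$.
\end{itemize}
By Lemma~\ref{lem:path-endpoints}, $\Phi_k$ preserves sources and is a bijection on paths of length $k$. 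Hence the number of length-$k$ paths from $v$ is the same in $E$ and $E_\sigma$. Combined with the key observation, $E$ has an exitless cycle iff $E_\sigma$ does, which is (2). It also shows that the set of vertices on closed paths coincides in $E$ and $E_\sigma$. For (3), a graph is no-exit iff every vertex on a closed path satisfies the counting condition above. By the same two facts this is again invariant.

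\emph{Part (4).} This is the step needing the most care. I will show that Condition~(K) fails at a vertex $v$ lying on a closed path iff, for all $k$ with $m\mid k$, there is at most one closed path of length $k$ based at $v$.
\begin{itemize}
\item If (K) fails at $v$, then $v$ lies on exactly one closed simple path $c$. Every closed path at $v$ is then a power of $c$, so there is at most one closed path at $v$ of each length.
\item If (K) holds at $v$, take distinct closed simple paths $\alpha,\beta$ at $v$. Then $\alpha^{m|\beta|}$ and $\beta^{m|\alpha|}$ both have length $m|\alpha||\beta|$, which is divisible by $m$. They are distinct: their first returns to $v$ are $\alpha$ and $\beta$ respectively, and $\alpha\neq\beta$ are closed simple paths.
\end{itemize}
By Proposition~\ref{prop:closed-paths}, for $m\mid k$ the number of closed paths of length $k$ at $v$ agrees in $E$ and $E_\sigma$. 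The vertices lying on closed paths also agree, by the key observation. So (K) holds in $E$ iff it holds in $E_\sigma$. The main subtlety is this restriction to lengths divisible by $m$, since Proposition~\ref{prop:closed-paths} only compares closed paths at such lengths.
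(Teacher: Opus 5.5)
Your proof is correct, but it is organized differently from the paper's. The paper proves one implication in each item, invoking Lemma~\ref{lem:symmetry} for the other, and works cycle by cycle. Starting from a cycle $c$ of length $k$ in $E_\sigma$, it passes to the closed path $\Phi_{km}(c^m)=f_1\cdots f_{km}$ in $E$. It then reads off out-degrees along this path using $s(f_j)=\sigma_0^{\,j-1}(s(e_{j'}))$ and the $\sigma_0$-invariance of $u\mapsto|s^{-1}(u)|$. For (4) it argues by contradiction: it pulls $d_1^{km}$ and $d_2^{km}$ back to powers of the unique closed simple path $c$, then compares prefixes of $\Phi_{kmt_1t_2}(c^{mt_1t_2})$.

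You instead recast each property in terms of quantities that are visibly the same for $E$ and $E_\sigma$:
\begin{itemize}
\item the set of vertices lying on closed paths;
\item the number of paths of length $k$ leaving $v$, for every $k$, preserved because $\Phi_k$ is a source-preserving bijection (Lemma~\ref{lem:path-endpoints});
\item the number of closed paths of length $k$ based at $v$ for $m\mid k$, preserved by Proposition~\ref{prop:closed-paths}.
\end{itemize}
This gives symmetric criteria. The converse direction then follows from bijectivity of $\Phi_{km}$, so Lemma~\ref{lem:symmetry} is not really needed. You also avoid the out-degree invariance and the prefix-compatibility of the maps $\Phi_n$, and (2)--(4) are handled uniformly. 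In exchange, the paper's argument builds the relevant cycle in $E$ explicitly out of $c$.

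You use two standard facts tacitly that are worth stating. First, a vertex on a closed path lies on a cycle; your criterion for (3) needs this vertex-wise version, not just ``a graph has a cycle iff it has a closed path''. Second, ``lies on a closed simple path'' in Condition~(K) is equivalent to ``lies on a closed path''.

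Finally, (3) has an even shorter version of your argument. $E$ is no-exit if and only if every vertex on a closed path emits exactly one edge. Both the set of such vertices and the out-degrees coincide for $E$ and $E_\sigma$.
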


\begin{proof}
We denote $m$ the order of $\sigma_0$. By Lemma~\ref{lem:symmetry}, it suffices in $(1)$, $(2)$ and $(3)$ to prove one implication in each case.

$(1)$ Let $c$ be a cycle of length $k$ in $E_{\sigma}$ based at $v$. Then $c^m$ is a closed
path of length $km$ based at $v$ in $E_{\sigma}$. Since $m\mid km$, Proposition~\ref{prop:closed-paths} implies that
$\Phi_{km}(c^m)$ is a closed path of length
$km\ge 1$ based at $v$ in $E$, and so $E$ contains a cycle.

$(2)$ Suppose $E_{\sigma}$ has a cycle $c= e_1\cdots e_k$ without exits, based at $v$. Then
$\bigl|s^{-1}(s(e_j))\bigr|=1$ for all $1\le j\le k$. Since $E$ and $E_\sigma$ have the same
source map, the same equality holds in $E$. Since $\sigma$ is an automorphism of $E$, the function $u\mapsto |s^{-1}(u)|$ is $\sigma_0$-invariant. Write
$\Phi_{km}(c^m)=f_1\cdots f_{km}$. By construction, $s(f_j)=\sigma_0^{\,j-1}(s(e_{j'}))$,
where $j'\in\{1,\dots ,k\}$ is the residue of $j$ modulo $k$. Hence, every vertex on
$\Phi_{km}(c^m)$ emits exactly one edge in $E$. As in $(1)$, $\Phi_{km}(c^m)$ is a closed path in $E$. From these observations, we obtain that $\Phi_{km}(c^m)$ is a power of a cycle all of whose vertices emit exactly one edge, i.e. of a cycle without exits. Therefore, $E$ contains a cycle without exits.

$(3)$ Assume that $E$ is a no-exit graph and $c= e_1\cdots e_k$ is a cycle of length $k$ in $E_{\sigma}$. By Proposition~\ref{prop:closed-paths}, $\Phi_{km}(c^m) = f_1\cdots f_{km}$ is a closed path in $E$. Since $E$ is a no-exit graph, $\Phi_{km}(c^m)$ is exactly a power of a cycle without exits. By construction, $s(f_j)=\sigma_0^{\,j-1}(s(e_{j'}))$,
where $j'\in\{1,\dots ,k\}$ is the residue of $j$ modulo $k$. 
Since $\sigma$ is an automorphism of $E$, the function $u\mapsto |s^{-1}(u)|$ is $\sigma_0$-invariant. Then, since $E$ and $E_\sigma$ have the same source map, it follows that
$$\bigl|s^{-1}(s(e_j))\bigr|= \bigl|s^{-1}(s(f_j))\bigr|=1$$ for all $1\le j\le k$. Therefore, $c$ has no exits, and so $E_{\sigma}$ is a no-exit graph.

$(4)$ Assume that $E$ satisfies Condition $(\mathrm{K})$, whereas $E_{\sigma}$ does not satisty Condition $(\mathrm{K})$.
Then there exist a vertex $v$ and a unique simple closed path $c$ in $E_{\sigma}$ based at $v$. By Proposition~\ref{prop:closed-paths}, $\Phi_{km}(c^m)$ is a closed path of length $km$ based at $v$ in $E$, where $ k := |c|$. Since $E$ satisfies Condition $(\mathrm{K})$, there exist distinct simple closed paths $d_1$ and $d_2$ based at $v$ in $E$. By Proposition~\ref{prop:closed-paths} again, $\Phi^{-1}_{kmt_1}(d_1^{km})$ and  $\Phi^{-1}_{kmt_2}(d_2^{km})$ are closed paths based at $v$ in $E_{\sigma}$, where $t_i := |d_i|$ for $i=1, 2$. Since $c$ is the unique simple closed path based at $v$ in $E_{\sigma}$, it follows that 
\begin{center}
$\Phi^{-1}_{kmt_1}(d_1^{km}) = c^{mt_1}$ and $\Phi^{-1}_{kmt_2}(d_2^{km}) = c^{mt_2}$.   
\end{center}
Equivalently, 
\begin{center}
$\Phi_{kmt_1}(c^{mt_1}) = d_1^{km}$ and $\Phi_{kmt_2}(c^{mt_2}) = d_2^{km}$.   
\end{center}
By Proposition~\ref{prop:closed-paths}, $\Phi_{kmt_1t_2}(c^{mt_1t_2})$ is a closed path in $E$. Moreover, the above equalities imply that
$$d_1^{km}\alpha = \Phi_{kmt_1t_2}(c^{mt_1t_2}) = d_2^{km}\beta$$ for some paths $\alpha$ and $\beta$ in $E$. This is impossible, since 
 $d_1$ and $d_2$ are distinct simple closed paths in $E$, thus completing the proof.
\end{proof}

Consequently, we obtain several properties of Leavitt path algebras that are invariant under the twists.

\begin{proposition}\label{prop:strongly-graded-preserved}
Let $K$ be a field, $E$ a graph such that $E^0$ is finite, and $\sigma$ an automorphism of $E$. Then the following statements hold:

$(1)$ $L_K(E)$ is strongly graded if and only if $L_K(E)^{\sigma}$ is strongly graded;

$(2)$ Every primitive idempotent in $L_K(E)$ is minimal if and only if the same holds for $L_K(E)^{\sigma}$;

$(3)$ $L_K(E)$ is von Neumann regular if and only if $L_K(E)^{\sigma}$ is von Neumann regular;

$(4)$ $L_K(E)$ is Dedekind finite if and only if $L_K(E)^{\sigma}$ is Dedekind finite;

$(5)$ $L_K(E)$ is stably finite if and only if $L_K(E)^{\sigma}$ is stably finite;

$(6)$ $L_K(E)$ is an exchange ring if and only if $L_K(E)^{\sigma}$ is an exchange ring.
\end{proposition}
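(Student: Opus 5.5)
By Theorem~\ref{thm:twist-is-LPA}, $L_K(E)^{\sigma}\cong L_K(E_\sigma)$ as $\Z$-graded unital $K$-algebras, since $E^0$ is finite. Each of the six properties is invariant under (graded, for (1)) ring isomorphism. So in every item it suffices to compare $L_K(E)$ with $L_K(E_\sigma)$. The plan is to invoke, for each property, the known graph-theoretic characterization for Leavitt path algebras of graphs with finitely many vertices. We then check that the relevant graph condition is shared by $E$ and $E_\sigma$, using Corollary~\ref{cor:acyclic} and the fact that $E$ and $E_\sigma$ have the same source map. Throughout, $E_\sigma$ has the same vertex set as $E$, so it also has finitely many vertices and $L_K(E_\sigma)$ is unital.

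\emph{The individual items.}
\begin{enumerate}
\item[(1)] We use Hazrat's criterion extended to arbitrary graphs with finite $E^0$: $L_K(E)$ is strongly graded iff $E$ has no sinks (and, for infinite emitters, the appropriate condition, which likewise depends only on $s$). Since $E$ and $E_\sigma$ have the same sinks, infinite emitters and regular vertices, (1) follows at once.
\item[(3)] We use the Abrams--Rangaswamy theorem: $L_K(E)$ is von Neumann regular iff $E$ is acyclic. Corollary~\ref{cor:acyclic}(1) then gives (3).
\item[(4),(5)] For finite $E^0$, Dedekind finiteness and stable finiteness of $L_K(E)$ are both equivalent to $E$ being a no-exit graph (Vaš / Alahmadi--Alsulami--Jain--Zelmanov). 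Corollary~\ref{cor:acyclic}(3) then yields both items.
\item[(6)] By Abrams--Aranda Pino--Siles Molina (for row-finite graphs) and Goodearl (general), $L_K(E)$ is an exchange ring iff $E$ satisfies Condition (K). Corollary~\ref{cor:acyclic}(4) gives (6).
\item[(2)] We use the characterization that every primitive idempotent of $L_K(E)$ is minimal iff every cycle in $E$ has an exit, i.e.\ Condition (L). Corollary~\ref{cor:acyclic}(2) then applies.
\end{enumerate}

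\emph{Expected obstacle.} The main obstacle is item (2), and possibly (1) in the presence of infinite emitters. For (2) I must pin down exactly which graph condition characterizes "every primitive idempotent is minimal" in the literature and confirm that it is Condition (L) or a condition expressible through cycles, exits and the source map. If the characterization instead involves, say, line points or cycles without exits in hereditary saturated closures, I would reprove the needed invariance directly from Proposition~\ref{prop:closed-paths} and Lemma~\ref{lem:symmetry}. The method would follow Corollary~\ref{cor:acyclic}: transfer cycles via $\Phi_{km}(c^m)$ and use that $u\mapsto|s^{-1}(u)|$ is $\sigma_0$-invariant. For (1), I would verify that the strong-gradedness criterion refers only to sinks and emitting behaviour, so that the shared source map settles it. The remaining items are direct citations combined with Corollary~\ref{cor:acyclic}.
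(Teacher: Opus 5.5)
Your proposal is correct and follows the paper's proof. Both transfer each property to $L_K(E_\sigma)$ via Theorem~\ref{thm:twist-is-LPA}, invoke the standard graph-theoretic characterizations (row-finite with no sinks, acyclic, no-exit, Condition (K), Condition (L)), and conclude from the shared source map and Corollary~\ref{cor:acyclic}. For (2), your criterion, Condition (L), is the right one to pair with Corollary~\ref{cor:acyclic}(2): a cycle without exits based at $v$ makes $v$ primitive with $vL_K(E)v\cong K[x,x^{-1}]$, hence not minimal. The paper's text misphrases this criterion as ``no cycle has an exit'', although it cites the same corollary you do.
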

\begin{proof}
(1) The graphs $E$ and $E_{\sigma}$ have the same source map and, consequently, the same sinks. By \cite[Theorem 4.2]{CHR}, the unital Leavitt path algebra $L_K(F)$ of a graph $F$ is strongly graded if and only if $F$ is row-finite and has no sinks. Hence, the desired statement immediately follows from these observations and Theorem~\ref{thm:twist-is-LPA}.

(2) It is well known (see, e.g., \cite[Theorem 3.5.7]{AAS}) that for every graph $F$, every primitive idempotent in $L_K(E)$ is minimal if and only if no cycle in $F$ has an exit.
Using this result together with Theorem~\ref{thm:twist-is-LPA} and Corollary~\ref{cor:acyclic}(2), we immediately obtain the desired statement.

(3) It is well known (see, e.g., \cite[Theorem 3.4.1]{AAS}) that the Leavitt path algebra of an arbitrary $F$ over $K$ is von Neumann regular if and only if $F$ is acyclic. Thus, using this result together with Theorem~\ref{thm:twist-is-LPA} and Corollary~\ref{cor:acyclic}(1), we immediately obtain the desired statement.

By \cite[Theorem 4.13]{Ben}, a unital Leavitt path algebra $L_K(F)$ of a graph $F$ is stably finite if and only if $L_K(F)$ is Dedekind finite if aond only if $F$ is a no-exit graph. Thus, using these results together with Theorem~\ref{thm:twist-is-LPA} and Corollary~\ref{cor:acyclic}(3), we immediately obtain the desired statements (4) and (5).

(6) It is well known (see, e.g., \cite[Theorem 3.3.11]{AAS}) that the Leavitt path algebra of an arbitrary $F$ over $K$ is an exchange ring if and only if $F$ satisfies Condition $(\mathrm{K})$. Thus, using this result together with Theorem~\ref{thm:twist-is-LPA} and Corollary~\ref{cor:acyclic}(4), we immediately obtain the desired statement.
\end{proof}

It is worth mentioning the following example.

\begin{example}\label{ex:inf-graph}
 Let $E$ be the graph with 
 \begin{center}
$E^0= \{v_n\mid n\in \mathbb{Z}\}$ and $E^1 = \{e_n\mid n\in \mathbb{Z}\},$     
 \end{center}
where $s(e_n) = v_n$ and $r(e_n) = v_{n+1}$ for all $n$. Hence, $E$ is the following graph:
$$\xymatrix{... \ar[r]^{e_{-2}}&v_{-1} \ar[r]^{e_{-1}}&v_0 \ar[r]^{e_{0}}&v_1 \ar[r]^{e_1}&v_2 \ar[r]^{e_{2}}&...}$$ 
Let $\sigma = (\sigma_0, \sigma_1)$ be the automorphism of $E$ defined by
 \begin{center}
  $\sigma_0(v_n) = v_{n+1}$ and   $\sigma_1(e_n) = e_{n+1}$   
 \end{center}
for all $n$. We then have that $E_{\sigma}$ is the union of $\mathbb{Z}$-indexed copies of the graph $R_1$. In other words, $E_{\sigma}$ is the following graph:
$$\xymatrix{... &v_{-2}\ar@(ul,ur)^{e_{-2}} &v_{-1}\ar@(ul,ur)^{e_{-1}} &v_0\ar@(ul,ur)^{e_0} &v_1\ar@(ul,ur)^{e_1} & v_2\ar@(ul,ur)^{e_2}&  ...}$$ 
Moreover, statements (1), (2) and (4) of Corollary \ref{cor:acyclic}, as well as statements (2), (3) and (6) of Proposition \ref{prop:strongly-graded-preserved} do not hold for $E$.
\end{example}

A ring $R$ is said to have {\it Unbounded Generating Number} ({\it UGN} for short) if, for each positive integer $m$, any set of generators for the free right $R$-module $R^m$ has cardinality $\ge m$. Equivalently, if there is an epimorphism of right free $R$-modules $R^n \longrightarrow R^m$, then $n\ge m$ (see, e.g., \cite[Remark 2.2]{ANP}). For Leavitt path algebra $L_K(E)$ of a finite graph $E$, the authors of \cite[Theorem 3.16]{ANP} gave criteria for when $L_K(E)$ has UGN. Note that $L_K(E)$ has UGN if and only if $L_K(E)$ is algebraically amenable (see \cite[Remark 3.17]{ANP} and \cite[Corollary 5.11]{ALLW}), and if and only if $L_K(E)$ has a nonzero finite-dimensional right module (see \cite[Corollary 6.7]{KO}). 

The next goal of this section is to show that the UGN property of a Leavitt path algebra is invariant under twists. To do so, we need some useful notions and facts. First, a cycle $c=e_1\cdots e_k$ in a graph $E$ is called a {\it source cycle} if $|r^{-1}(s(e_i))| =1$ for all $1\le i\le k$.

\begin{lemma}\label{lm:source-cycle}
Let $E$ be a graph such that $E^0$ is finite and let $\sigma$ be an automorphism of $E$. Then $E$ has a source cycle if and only if $E_{\sigma}$ has a source cycle.
\end{lemma}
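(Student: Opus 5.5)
By Lemma~\ref{lem:symmetry} we have $(E_\sigma)_{\sigma^{-1}}=E$, and $\sigma^{-1}$ is an automorphism of $E_\sigma$. So it suffices to prove one implication: if $E_\sigma$ has a source cycle, then so does $E$. The plan follows the pattern of the proofs of Corollary~\ref{cor:acyclic}(1) and (2), with the out-degree function $u\mapsto|s^{-1}(u)|$ replaced by the in-degree function $u\mapsto|r^{-1}(u)|$.

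\emph{Step 1: in-degrees agree in $E$ and $E_\sigma$.} Write $r_\sigma^{-1}(w)$ for the set of edges entering $w$ in $E_\sigma$. Since $r_\sigma=\sigma_0^{-1}\circ r$, we have $r_\sigma^{-1}(w)=r^{-1}(\sigma_0(w))$, so $|r_\sigma^{-1}(w)|=|r^{-1}(\sigma_0(w))|$. Because $\sigma$ is an automorphism of $E$, $\sigma_1$ restricts to a bijection $r^{-1}(w)\to r^{-1}(\sigma_0(w))$. Hence the in-degree function is $\sigma_0$-invariant, and therefore $|r_\sigma^{-1}(w)|=|r^{-1}(w)|$ for every $w\in E^0$. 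In particular a vertex has in-degree $1$ in $E_\sigma$ if and only if it has in-degree $1$ in $E$. The same invariance gives $|r^{-1}(\sigma_0^j(w))|=|r^{-1}(w)|$ for all $j$.

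\emph{Step 2: transport the source cycle.} Let $c=e_1\cdots e_k$ be a source cycle in $E_\sigma$ based at $v$, and let $m$ be the order of $\sigma_0$, which is finite since $E^0$ is finite. By Proposition~\ref{prop:closed-paths}, $\Phi_{km}(c^m)=f_1\cdots f_{km}$ is a closed path in $E$ based at $v$. As in the proof of Corollary~\ref{cor:acyclic}(2), $s(f_j)=\sigma_0^{\,j-1}(s(e_{j'}))$, where $j'$ is the residue of $j$ modulo $k$. Each $s(e_{j'})$ has in-degree $1$ in $E_\sigma$, hence in $E$ by Step 1, and by $\sigma_0$-invariance so does each $s(f_j)$. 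Thus every vertex on this closed path receives exactly one edge in $E$.

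\emph{Step 3: extract a cycle.} Any closed path contains a cycle as a subpath: choose indices $i<j$ with $s(f_i)=s(f_j)$ and $j-i$ minimal, or take the whole path if no vertex repeats. Then $f_i\cdots f_{j-1}$ is a cycle in $E$. All of its vertices lie on $\Phi_{km}(c^m)$, so they have in-degree $1$, and it is a source cycle.

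The only point requiring care, which I expect to be the main obstacle, is the bookkeeping in Step 2. One must verify that the vertex $s(f_j)$ really is a $\sigma_0$-translate of a vertex of $c$, so that in-degree $1$ is inherited; this is exactly the computation $s(\sigma_1^{j-1}(e))=\sigma_0^{j-1}(s(e))$ already used in Corollary~\ref{cor:acyclic}.
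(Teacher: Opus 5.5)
Your proposal is correct and follows essentially the same route as the paper's proof. Both reduce to one implication via Lemma~\ref{lem:symmetry}, both use $r_\sigma^{-1}(w)=r^{-1}(\sigma_0(w))$ together with $\sigma_0$-invariance of in-degrees, and both transport the source cycle $c$ to the closed path $\Phi_{km}(c^m)$, whose vertices all have in-degree $1$ in $E$.

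The only difference is minor. The paper simply asserts that $\Phi_{km}(c^m)$ is a power of a source cycle, whereas your Step 3 explicitly extracts a cycle by taking a minimal repetition of vertices, which is a slightly more self-contained way to finish.
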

\begin{proof} We denote $m$ the order of $\sigma_0$. By Lemma~\ref{lem:symmetry}, it suffices to prove one implication.
Assume that $E_{\sigma}$ contains a source cycle $c= e_1\cdots e_k$. We then have that $|r^{-1}_{\sigma}(s(e_i))| = 1$ for all $1\le i\le k$. We note that
$$f\in  r^{-1}_{\sigma}(s(e_i)) \Longleftrightarrow r(f) = \sigma_0(s(e_i)) \Longleftrightarrow f \in r^{-1}(\sigma_0(s(e_i))),$$ and so $|r^{-1}_{\sigma}(s(e_i))| = |r^{-1}(\sigma_0(s(e_i)))| = 1$ for all $1\le i\le k$. Since $\sigma$ is an automorphism of $E$, it follows that
$$|r^{-1}(\sigma^t_0(s(e_i)))| = |r^{-1}(\sigma_0(s(e_i)))| = 1$$ for all $t$ and $i$. By Proposition \ref{prop:closed-paths}, $\Phi_{km}(c^m) = f_1\cdots f_{km}$ is a closed path in $E$, where $s(f_j)=\sigma_0^{\,j-1}(s(e_{j'}))$,
where $j'\in\{1,\dots ,k\}$ is the residue of $j$ modulo $k$. Moreover, \[|r^{-1}(s(f_j))| = |r^{-1}(\sigma_0^{j-1}(s(e_{j'})))| =1\] for all $1\le j \le km$. Hence, $\Phi_{km}(c^m)$ is exactly a power of a source cycle. Thus, $E$
 contains a source cycle, completing the proof.
\end{proof}

Let $E = (E^0, E^1, r, s)$ be a finite graph containing a cycle, and $\sigma$ an automorphism of $E$. We denote by $E_{sf}$ the graph without sources obtained from $E$ by repeatedly deleting all sources. Note that $E_{sf}$ has a cycle and is independent of the source elimination process (see \cite[Lemma 3.13]{ANP}). We denote by $S(E)$ the set of all sources in $E$. It is clear that $S(E)$ is $\sigma_0$-invariant. We define a new graph $E\smallsetminus_{S(E)}$ by
\begin{center}
$(E\smallsetminus_{S(E)})^0 = E^0\setminus S(E)$    and  $(E\smallsetminus_{S(E)})^1 = E^1\setminus s^{-1}(S(E))$
\end{center}
with source and range maps given by the restrictions of 
$s$ and $r$, respectively. Since $S(E)$ is $\sigma_0$-invariant, $\sigma$ induces an automorphism of $E\smallsetminus_{S(E)}$. Moreover, $v\in S(E)$ is an isolated vertex (i.e., $v$ is both a source and a sink) if and only if $\sigma_0(v)$ is an isolated vertex of $S(E_{\sigma})$. Using these notes together with \cite[Theorem 3.16]{ANP}, we obtain the following.

\begin{proposition}\label{prop:UGN}
Let $K$ be a field, $E$ a finite graph, and $\sigma$ an automorphism of $E$. Then $L_K(E)$ has Unbounded Generating Number if and only if   Let $K$ be a field, $E$ a finite graph, and $\sigma$ an automorphism of $E$. Then $L_K(E)$ has Unbounded Generating Number if and only if $L_K(E)^{\sigma}$ has Unbounded Generating Number.   
\end{proposition}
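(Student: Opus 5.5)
We reduce to a statement about graphs. By Theorem~\ref{thm:twist-is-LPA}, $L_K(E)^{\sigma}\cong L_K(E_\sigma)$ as unital $K$-algebras, and UGN is a ring-theoretic property, so it suffices to show that $L_K(E)$ has UGN if and only if $L_K(E_\sigma)$ has UGN. By Lemma~\ref{lem:symmetry}, $\sigma$ is an automorphism of $E_\sigma$ and $(E_\sigma)_{\sigma^{-1}}=E$. Hence it is enough to prove one implication, or, better, to check that each combinatorial condition in the criterion of \cite[Theorem 3.16]{ANP} is invariant under $E\mapsto E_\sigma$.

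We recall the shape of the criterion of \cite[Theorem 3.16]{ANP} for a finite graph $F$: $L_K(F)$ has UGN if and only if either $F$ is acyclic, or $F$ has an isolated vertex, or $F$ has a cycle and the source-free graph $F_{sf}$ contains a source cycle. This matches the data prepared just before the statement. Each ingredient is then checked in turn.

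\emph{Acyclicity} is preserved by Corollary~\ref{cor:acyclic}(1).

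\emph{Isolated vertices.} $E$ and $E_\sigma$ have the same sinks, since they share the source map. By the argument in Lemma~\ref{lem:circ-commutes}, they also have the same sources: $w$ is a source of $E_\sigma$ iff $\sigma_0(w)$ is a source of $E$, and the set of sources of $E$ is $\sigma_0$-invariant. Hence they have the same isolated vertices.

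\emph{The source cycle in $F_{sf}$.} We show that $(E_\sigma)_{sf}=(E_{sf})_\sigma$, with $\sigma$ restricted. Since $S(E)=S(E_\sigma)$ is $\sigma_0$-invariant, the graphs $E\smallsetminus_{S(E)}$ and $E_\sigma\smallsetminus_{S(E_\sigma)}$ have the same vertices and the same edges $E^1\setminus s^{-1}(S(E))$. Their range maps are $r$ and $\sigma_0^{-1}\circ r$. An edge surviving in $E$ has range outside $S(E)$, because the range of an edge is never a source, and by invariance the same holds for its twisted range. So $E_\sigma\smallsetminus_{S(E_\sigma)}=(E\smallsetminus_{S(E)})_\sigma$, and $\sigma$ restricts to an automorphism of $E\smallsetminus_{S(E)}$. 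Inducting on the number of deletion steps, and using that $E_{sf}$ is independent of the elimination process \cite[Lemma 3.13]{ANP}, gives $(E_\sigma)_{sf}=(E_{sf})_\sigma$. Applying Lemma~\ref{lm:source-cycle} to the finite graph $E_{sf}$ with the restricted automorphism, $E_{sf}$ has a source cycle iff $(E_\sigma)_{sf}$ does. Corollary~\ref{cor:acyclic}(1) also shows that $E$ has a cycle iff $E_\sigma$ does.

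Combining these invariances with \cite[Theorem 3.16]{ANP} gives the equivalence. The main obstacle is pinning down the exact formulation of \cite[Theorem 3.16]{ANP}. If its condition refers to a more refined structure, for example the hereditary closure of a source cycle, or cycles with no other cycles feeding into them, the same two tools still handle it: the $\Phi_{km}$-correspondence of Proposition~\ref{prop:closed-paths}, combined with $\sigma_0$-invariance of local in/out-degree data as in the proofs of Corollary~\ref{cor:acyclic} and Lemma~\ref{lm:source-cycle}, transports such configurations between $E$ and $E_\sigma$.
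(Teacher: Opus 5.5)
Your overall route matches the paper's. You pass to $L_K(E_\sigma)$ via Theorem~\ref{thm:twist-is-LPA} and show that each clause of \cite[Theorem 3.16]{ANP} is invariant under $E\mapsto E_\sigma$. The tools are Corollary~\ref{cor:acyclic}(1), the stagewise identity $(E_\sigma)_i=(E_i)_\sigma$ along the source-elimination sequence $E_0=E$, $E_{i+1}=E_i\smallsetminus_{S(E_i)}$, and Lemma~\ref{lm:source-cycle} applied to $E_{sf}$. Your derivation of $(E_\sigma)_{sf}=(E_{sf})_\sigma$ is correct, and more explicit than the paper's ``by induction''.

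\textbf{The gap is in the isolated-vertex clause.} As the paper uses it, the criterion for a finite graph $F$ containing a cycle reads: $L_K(F)$ has UGN if and only if either $S(F_k)$ contains an isolated vertex for some stage $k$ of the source elimination, or $F_{sf}$ contains a source cycle. You only allow isolated vertices of $F$ itself, and with that reading the criterion is false.

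\emph{Counterexample.} Let $F$ consist of a single edge $u\to w$ together with a disjoint copy of $R_2$ at a vertex $v$.
\begin{itemize}
\item[(a)] $F$ has a cycle, no vertex of $F$ is isolated, and $F_{sf}=R_2$ has no source cycle.
\item[(b)] Yet $L_K(F)\cong M_2(K)\times L_K(1,2)$ has UGN. An epimorphism $L_K(F)^n\to L_K(F)^m$ induces an epimorphism $M_2(K)^n\to M_2(K)^m$, which forces $n\ge m$.
\item[(c)] What the correct criterion detects is that $w$ becomes isolated in $F_1=F\smallsetminus_{S(F)}$.
\end{itemize}
So checking isolated vertices only at stage $0$ does not prove invariance of the condition actually in the theorem.

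\emph{Repair.} The fix is already contained in your own argument. Your induction gives, for every $i$, both $S(E_i)=S((E_\sigma)_i)$ and $(E_\sigma)_i=(E_i)_\sigma$. Since $E_i$ and $(E_i)_\sigma$ share the source map, they have the same sinks. Hence $S(E_i)$ and $S((E_\sigma)_i)$ contain exactly the same isolated vertices at every stage. This is precisely how the paper concludes. The $\Phi_{km}$-transport you allude to in your last paragraph is not what this clause requires.
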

\begin{proof}
By Lemma~\ref{lem:symmetry}, it suffices to prove one implication. Assume that  $L_K(E)$ has Unbounded Generating Number. We claim that $L_K(E_{\sigma})$ has Unbounded Generating Number. Indeed, if $E$ is acyclic, then by Corollary \ref{cor:acyclic}(1), $E_{\sigma}$ is an acyclic graph. By \cite[Theorem 3.16]{ANP}, $L_K(E_{\sigma})$ has Unbounded Generating Number. Consider the case when $E$ has a cycle. By Corollary \ref{cor:acyclic}(1), $E_{\sigma}$ has a cycle. We define a sequence of finite graphs $(E_n)_{n\in \mathbb{N}}$  recursively by
\begin{center}
$E_0 := E$\quad and\quad $E_{n+1} := E_n\smallsetminus_{S(E_n)}$    
\end{center}
for all $n$. Since $E$ is finite, there exists a smallest positive integer $n_0$ such that $E_{n_0} = E_{sf}$. By induction on $n_0$, we obtain that $S(E_i) = S((E_{\sigma})_i)$, $\sigma$ induces an automorphism of $E_i$, and $(E_i)_{\sigma} = (E_{\sigma})_i$ for every $1\le i\le n_0$. In particular, $(E_{sf})_{\sigma} = (E_{\sigma})_{sf}$.

If there exists $k$ with $0\le k < n_0$ such that $S(E_k)$ contains an isolated vertex $v$, then $S((E_{\sigma})_k)$ contains the isolated vertex $\sigma_0(v)$. By \cite[Theorem 3.16]{ANP}, $L_K(E_{\sigma})$ has Unbounded Generating Number.
Otherwise, $S(E_i)$ contains no an isolated vertex for every $0\le i< n_0$. Then the same holds for each $S((E_{\sigma})_i)$ for every $0\le i< n_0$. Since $L_K(E)$ has Unbounded Generating Number, it follows from \cite[Theorem 3.16]{ANP} that $E_{sf}$ contains a source cycle. By Lemma \ref{lm:source-cycle}, $(E_{\sigma})_{sf}$ also contains a source cycle. Applying \cite[Theorem 3.16]{ANP} again, we conclude that $L_K(E_{\sigma})$ has Unbounded Generating Number, proving the claim.  By Theorem \ref{thm:twist-is-LPA},  $L_K(E)^{\sigma}$ has Unbounded Generating Number, thus completing the proof.  
\end{proof}

Proposition \ref{prop:UGN} naturally raises the question of whether the IBN property of the Leavitt path algebra of a finite graph is invariant under twists. This question is rather difficult to answer. However, its graded analogue is much easier.



We say that a $\mathbb Z$-graded algebra $A$ has \emph{gr-IBN} if every graded right $A$-module isomorphism $A^{n}\cong A^{m}$ implies $m=n$.

\begin{proposition}\label{prop:gr-IBN}
Let $A$ be a $\mathbb Z$-graded $K$-algebra and let $\tau$ be a twisting system. Then $A$ has gr-IBN if and only if $A^{\tau}$ has gr-IBN, and the two algebras have the same graded module type. Moreover, IBN implies gr-IBN.
\end{proposition}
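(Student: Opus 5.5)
The plan is to use Zhang's equivalence $F\colon \Gr\text{-}A\to\Gr\text{-}A^{\tau}$ from \cite{Zhang96}, which for a twisting system sends a graded module $M$ to the same graded vector space with action $m\star a=m\,\tau_n(a)$ for $m\in M_n$. First I will check that $F(A_A)\cong A^{\tau}_{A^{\tau}}$: on the nose, $F(A)$ is the underlying graded space of $A$ with $x\star a=x\tau_n(a)$ for $x\in A_n$, which is exactly the right regular module of $A^\tau$. Since $F$ is an additive equivalence, it commutes with finite direct sums and acts as the identity on morphisms viewed as graded linear maps. Hence $F(A^n)\cong (A^\tau)^n$, and any graded isomorphism $A^n\cong A^m$ of right $A$-modules becomes a graded isomorphism $(A^\tau)^n\cong(A^\tau)^m$ of right $A^\tau$-modules.

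The converse needs a quasi-inverse of $F$ of the same type. I would use that $\tau$ has an inverse twisting system $\tau'$ with $(A^\tau)^{\tau'}=A$ (Zhang, \cite{Zhang96}); for $\tau=\{\sigma^n\}$ this is just $\{\sigma^{-n}\}$. Alternatively, I can simply invoke that $F$ is an equivalence and that a quasi-inverse $G$ satisfies $G(A^\tau)\cong A$, so $G((A^\tau)^n)\cong A^n$.

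Therefore the sets of pairs $(n,m)$ with $A^n\cong A^m$ in $\Gr\text{-}A$ and with $(A^\tau)^n\cong (A^\tau)^m$ in $\Gr\text{-}A^\tau$ coincide. Both gr-IBN and the graded module type (the lexicographically minimal such pair with $n\neq m$) depend only on this set, so these two statements follow at once. The only care point is to insist that the isomorphisms are degree-preserving, with no shifts, since $F$ commutes with shifts only up to twisting. This is harmless, because the definition of gr-IBN uses unshifted free modules $A^n$.

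For the last claim, a graded isomorphism $A^n\cong A^m$ is in particular an isomorphism of ungraded right $A$-modules after forgetting the grading. So if $A$ has IBN, then $n=m$, and $A$ has gr-IBN.

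The main obstacle is only bookkeeping. I need to confirm that $F(A_A)$ is literally $A^\tau_{A^\tau}$, which uses normalization $\tau_0=\mathrm{id}$ or the fact that the twist is taken with respect to the degree of the left factor. I also need to confirm that $F$ is an equivalence for a general twisting system, which is Zhang's theorem and can be cited.
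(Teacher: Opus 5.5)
Your proposal is correct and follows essentially the same route as the paper: Zhang's twisting functor $F\colon \mathsf{Gr}\text{-}A\to\mathsf{Gr}\text{-}A^{\tau}$ is an additive equivalence with $F(A_A)\cong A^{\tau}_{A^{\tau}}$, so $A^n\cong A^m$ in $\mathsf{Gr}\text{-}A$ if and only if $(A^{\tau})^n\cong(A^{\tau})^m$ in $\mathsf{Gr}\text{-}A^{\tau}$, and IBN implies gr-IBN because a graded isomorphism is in particular an ungraded one. Your appeal to a quasi-inverse or inverse twisting system for the converse is harmless but unnecessary, since an equivalence is fully faithful and therefore already reflects isomorphisms.
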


\begin{proof}
Zhang's twisting functor $F\colon\mathsf{Gr}\text{-}A\to\mathsf{Gr}\text{-}A^{\tau}$ is an
additive equivalence with $F(A_A)\cong A^{\tau}_{A^{\tau}}$ \cite{Zhang96}, so
$F(A^{n})\cong (A^{\tau})^{n}$ and $A^{n}\cong A^{m}$ in $\mathsf{Gr}\text{-}A$ if and only
if $(A^{\tau})^{n}\cong(A^{\tau})^{m}$ in $\mathsf{Gr}\text{-}A^{\tau}$. The last assertion
is immediate as a graded isomorphism $A^{n}\cong A^{m}$ is, in particular, an ungraded one.
\end{proof}

\begin{corollary}\label{cor:where-counterexample-lives}
A negative answer to Question \ref{q:IBN} requires a $\mathbb Z$-graded algebra $A$ with
IBN whose twist $A^{\tau}$ is gr-IBN but does not have IBN. In particular no counterexample
exists among algebras for which gr-IBN implies IBN.
\end{corollary}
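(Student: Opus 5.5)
The statement is a formal consequence of Proposition~\ref{prop:gr-IBN}. I would argue by contrapositive, by unwinding what a negative answer to Question~\ref{q:IBN} means. Suppose $A$ is a $\mathbb Z$-graded algebra with IBN and $\tau$ is a twisting system such that $A^{\tau}$ fails IBN. Such a pair is exactly a counterexample to Question~\ref{q:IBN}.

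\textbf{Step 1: $A^{\tau}$ is gr-IBN.} By the last assertion of Proposition~\ref{prop:gr-IBN}, IBN implies gr-IBN, so $A$ has gr-IBN. The first assertion of the same proposition transfers gr-IBN across Zhang's equivalence $F\colon\mathsf{Gr}\text{-}A\to\mathsf{Gr}\text{-}A^{\tau}$. Hence $A^{\tau}$ has gr-IBN. Together with the standing assumptions, $A$ has IBN, $A^{\tau}$ is gr-IBN, and $A^{\tau}$ lacks IBN. This is precisely the configuration described in the first sentence of the corollary.

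\textbf{Step 2: the class where gr-IBN implies IBN.} For the ``in particular'' clause, suppose $A^{\tau}$ belongs to a class of algebras for which gr-IBN implies IBN. Step~1 gives gr-IBN for $A^{\tau}$, so $A^{\tau}$ would have IBN. This contradicts the assumption that $A^{\tau}$ is a counterexample, so no counterexample exists among such twisted algebras.

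\textbf{Where care is needed.} There is no real obstacle here. The only point to check is the wording of ``among algebras for which gr-IBN implies IBN'': it should be read as applying to the twisted algebra $A^{\tau}$, or symmetrically to both $A$ and $A^{\tau}$. I would state explicitly that the implication is applied to $A^{\tau}$, which is the algebra where Step~2 uses it.

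The argument uses no properties of the twisting system beyond Zhang's equivalence sending $A_A$ to $A^{\tau}_{A^{\tau}}$, which is already packaged inside Proposition~\ref{prop:gr-IBN}.
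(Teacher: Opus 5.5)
Your proposal is correct and takes the same route as the paper, which gives no separate proof and treats the corollary as an immediate consequence of Proposition~\ref{prop:gr-IBN}. The chain is: IBN for $A$ gives gr-IBN for $A$, gr-IBN transfers to $A^{\tau}$ through Zhang's equivalence, so the twist in any counterexample is gr-IBN without IBN; your reading of the ``in particular'' clause as applying to $A^{\tau}$ is the right one.
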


For the twists of Theorem \ref{thm:twist-is-LPA} this becomes a combinatorial
question. Recall the graph monoid $M_E$ of a finite graph $E$, the commutative monoid with generators $\{v\mid v\in
E^0\}$ and relations $v=\sum_{e\in s^{-1}(v)}r(e)$ for regular $v$; by \cite{AMP} one has
$\mathcal V\bigl(L_K(E)\bigr)\cong M_E$ with $[L_K(E)]=\sum_{v\in E^0}v$, so that
$L_K(E)^{n}\cong L_K(E)^{m}$ if and only if $n\cdot\mathbf 1=m\cdot\mathbf 1$ in $M_E$,
where $\mathbf 1:=\sum_{v\in E^0}v$.

\begin{corollary}\label{cor:reduction}
Let $E$ be a finite graph and $\sigma$ an automorphism of $E$. Then $L_K(E)^{\sigma}$
fails to have IBN while $L_K(E)$ has IBN if and only if the module types of
$(M_{A},\mathbf 1)$ and $(M_{AP^{-1}},\mathbf 1)$ differ, where $A=A_E$ and $P$ is the
permutation matrix of $\sigma_0$. Conversely, every pair $(A,P)$ consisting of a
nonnegative integer square matrix $A$ and a permutation matrix $P$ with $PAP^{-1}=A$
arises from such a pair $(E,\sigma)$.
\end{corollary}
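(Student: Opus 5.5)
The statement has two parts: a reformulation of IBN for $L_K(E)$ and $L_K(E)^{\sigma}$ in terms of graph monoids, and a realization claim for pairs $(A,P)$. For the reformulation I would first apply Theorem~\ref{thm:twist-is-LPA}, which gives a unital isomorphism $L_K(E)^{\sigma}\cong L_K(E_\sigma)$. IBN is a property of the ring alone, so $L_K(E)^{\sigma}$ has IBN if and only if $L_K(E_\sigma)$ does.

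Next I would invoke the Ara--Moreno--Pardo isomorphism $\mathcal V(L_K(F))\cong M_F$, which sends $[L_K(F)]\mapsto\mathbf 1=\sum_{v}v$. This turns IBN for $L_K(F)$ into the monoid condition that $n\mathbf 1=m\mathbf 1$ in $M_F$ implies $n=m$. The same correspondence identifies the module type of $L_K(F)$ with the module type of the pointed monoid $(M_F,\mathbf 1)$.

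The graph monoid depends only on the vertex set and the adjacency matrix. Indeed, for a finite graph the relation at a regular vertex $v$ reads $v=\sum_w (A_F)_{v,w}\,w$. Hence $M_F=M_{A_F}$ with the same vertex set, and the same regular vertices, since regularity only involves row sums. By Proposition~\ref{prop:adjacency}, $A_{E_\sigma}=A_EP^{-1}$. Moreover $E$ and $E_\sigma$ have the same regular vertices, since the source map is unchanged. So the pair $(L_K(E),L_K(E)^{\sigma})$ corresponds exactly to $\bigl((M_A,\mathbf 1),(M_{AP^{-1}},\mathbf 1)\bigr)$. The claim that ``$L_K(E)$ has IBN while the twist fails it'' is then read as a discrepancy between these two pointed monoids. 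I would phrase it carefully as: IBN for one and non-IBN for the other, i.e., module types differing in the sense that one is ``IBN'' and the other is a genuine pair $(m,n)$. This dictionary step is essentially formal.

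For the converse, given $A\in M_n(\mathbb N)$ and a permutation matrix $P$ with $PAP^{-1}=A$, I would build $E$ with $E^0=\{1,\dots,n\}$ and with $A_{i,j}$ edges $\epsilon^{i,j}_1,\dots,\epsilon^{i,j}_{A_{i,j}}$ from $i$ to $j$. Let $\sigma_0$ be the permutation with $P_{i,j}=\delta_{j,\sigma_0(i)}$. The commutation $PA=AP$ gives $A_{\sigma_0(i),\sigma_0(j)}=A_{i,j}$, as in the proof of Proposition~\ref{prop:adjacency}, so setting $\sigma_1(\epsilon^{i,j}_k)=\epsilon^{\sigma_0(i),\sigma_0(j)}_k$ is a bijection compatible with $s$ and $r$. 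Then $A_E=A$, and $\sigma$ is an automorphism of $E$ with permutation matrix $P$.

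The main subtlety I expect is the index conventions: checking that $P_{i,j}=\delta_{j,\sigma_0(i)}$ really yields $A_{\sigma_0(i),\sigma_0(j)}=A_{i,j}$ from $PAP^{-1}=A$ rather than its transpose-inverse variant. I would verify this entrywise: $(PA)_{i,j}=A_{\sigma_0(i),j}$ and $(AP)_{i,j}=A_{i,\sigma_0^{-1}(j)}$.
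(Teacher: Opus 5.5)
Your proposal is correct and follows the paper's own proof: Theorem~\ref{thm:twist-is-LPA}, the Ara--Moreno--Pardo identification $\mathcal V(L_K(F))\cong M_{A_F}$ with $[L_K(F)]\mapsto\mathbf 1$, and $A_{E_\sigma}=AP^{-1}$ for the first part; for the converse, $\sigma_1$ is assembled from bijections $E^1(i,j)\to E^1(\sigma_0(i),\sigma_0(j))$, and your relabeling $\epsilon^{i,j}_k\mapsto\epsilon^{\sigma_0(i),\sigma_0(j)}_k$ is a concrete choice of these. Your caution about the wording is justified, but your fix needs to go one step further, because the literal ``module types differ'' does not imply the left-hand side: for $A=\bigl(\begin{smallmatrix}2&1\\1&2\end{smallmatrix}\bigr)$ with $\sigma_0$ the swap, $\mathbf 1=2\cdot\mathbf 1$ in $M_A$, while in $M_{AP^{-1}}$ one has $\mathbf 1=3\cdot\mathbf 1\neq 2\cdot\mathbf 1$ (detect the inequality via $v_i\mapsto$ the $i$-th basis vector of $(\Z/2\Z)^2$), so the module types $(1,2)$ and $(1,3)$ differ although neither algebra has IBN; what your dictionary actually proves is the asymmetric equivalence with ``$(M_A,\mathbf 1)$ has IBN and $(M_{AP^{-1}},\mathbf 1)$ does not,'' not the symmetric ``one is IBN and the other is not.''
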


\begin{proof}
The first assertion is the description of $\mathcal V$ recalled above together with
$A_{E_{\sigma}}=AP^{-1}$. For the converse, let $E$ be a graph with adjacency matrix $A$,
let $\sigma_0$ be the permutation of $E^0$ with matrix $P$, and note that
$PAP^{-1}=A$ says $|E^1(v,w)|=|E^1(\sigma_0(v),\sigma_0(w))|$ for all $v,w$; choosing a
bijection $E^1(v,w)\to E^1(\sigma_0(v),\sigma_0(w))$ for each pair $(v,w)$ assembles into a
bijection $\sigma_1$ of $E^1$ with $s\sigma_1=\sigma_0s$ and $r\sigma_1=\sigma_0r$.
\end{proof}

Let $E$ be a graph, $H$ a subset of $E^0$, and $\sigma$ an automorphism of $E$. We say that $H$ is {\it hereditary} if, for every $e\in E^1$, $s(e) \in H$ implies $r(e)\in H$. We say that $H$ is {\it saturated} if, for every regular vertex $v$, the inclusion $r(s^{-1}(v))\subseteq H$ implies $v\in H$. We denote by $\mathcal{H}_E$ the set of saturated hereditary subsets of $E^0$. Set $$\mathcal{H}_E^{\sigma} = \{H\in \mathcal{H}_E \mid \sigma_0(H) = H\}.$$
Let $K$ be a field. We denote by $\mathcal{L}_{gr}(L_K(E))$ the lattice of graded ideals of the Leavitt path algebra $L_K(E)$. By Lemma \ref{lem:graphaut}, the graph automorphism $\sigma$ induces a graded algebra automorphism of $L_K(E)$ also denoted by $\sigma$. Set $$\mathcal{L}_{gr}^{\sigma}(L_K(E)) = \{I\in \mathcal{L}_{gr}(L_K(E))\mid \sigma(I) = I\}.$$
By Lemma \ref{lem:symmetry}, the graph automorphism $\sigma^{-1}$ induces a graded algebra automorphism of $L_K(E)^{\sigma}$ also denoted by $\sigma^{-1}$. Set 
$$\mathcal{L}_{gr}^{\sigma^{-1}}(L_K(E)^{\sigma}) = \{I\in \mathcal{L}_{gr}(L_K(E)^{\sigma})\mid \sigma^{-1}(I) = I\}.$$

\begin{proposition}\label{prop:gr-ideal}
Let $K$ be a field, $E$ a row-finite graph, and $\sigma$ an automorphism of $E$. Then    $$\mathcal{H}_E^{\sigma} \cong \mathcal{L}_{gr}^{\sigma}(L_K(E))\cong \mathcal{L}_{gr}^{\sigma^{-1}}(L_K(E)^{\sigma})$$ 
as lattices.
\end{proposition}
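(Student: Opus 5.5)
The plan is to obtain the first isomorphism from the classical lattice isomorphism between saturated hereditary subsets and graded ideals, and the second by transporting graded ideals through Theorem~\ref{thm:twist-is-LPA}.

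\emph{First isomorphism.} For a row-finite graph $E$, the map $H\mapsto I(H)$, where $I(H)$ is the ideal generated by $H$, is a lattice isomorphism $\mathcal H_E\to\mathcal L_{gr}(L_K(E))$. Its inverse is $I\mapsto I\cap E^0$ (see, e.g., \cite[Theorem 2.5.8]{AAS}). We show this bijection restricts to the $\sigma$-fixed parts. Since $\sigma$ is an algebra automorphism, $\sigma(I(H))$ is the ideal generated by $\sigma(H)=\sigma_0(H)$, so $\sigma(I(H))=I(\sigma_0(H))$. Injectivity of $H\mapsto I(H)$ then gives $\sigma(I(H))=I(H)$ if and only if $\sigma_0(H)=H$. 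We also note that $\sigma_0$ permutes $\mathcal H_E$, since it preserves hereditary and saturated sets. A bijection between lattices that preserves order in both directions is a lattice isomorphism, so $\mathcal H_E^{\sigma}\cong\mathcal L^{\sigma}_{gr}(L_K(E))$.

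\emph{Second isomorphism.} We first compare graded ideals of $A=L_K(E)$ and $A^\sigma$ directly, without going through $E_\sigma$. Let $I$ be a graded subspace of $A$ with $\sigma(I)=I$. For homogeneous $a\in A_p$ and $x\in I_q$ we have $a\star x=a\sigma^p(x)$ and $x\star a=x\sigma^q(a)$. Since $\sigma^{k}$ restricts to a bijection of $I$ and of each $A_j$, $I$ is a two-sided ideal of $A$ if and only if it is a two-sided ideal of $A^\sigma$, exactly as in Theorem~\ref{thm:proj-twist}(1). So on $\sigma$-stable graded subspaces, the identity map matches the graded ideals of the two algebras.

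The remaining issue, which I expect to be the main obstacle, is bookkeeping. On the underlying space of $A^\sigma$, the map $\sigma^{-1}$ induced via Lemma~\ref{lem:symmetry} and Theorem~\ref{thm:twist-is-LPA} must coincide with the inverse of the original $\sigma$ on $A$. Then $\sigma^{-1}(I)=I$ if and only if $\sigma(I)=I$.

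To check this coincidence, we compute on generators of $L_K(E_\sigma)$ under $\Theta$. The graph automorphism $\sigma^{-1}$ of $E_\sigma$ sends $\Theta(v)=v$ to $\sigma_0^{-1}(v)$ and $\Theta(e)=e$ to $\sigma_1^{-1}(e)$. It sends $\Theta(e^*)=\sigma_1^{-1}(e)^*$ to $\Theta(\sigma_1^{-1}(e)^*)=\sigma_1^{-2}(e)^*=\sigma^{-1}(\sigma_1^{-1}(e)^*)$. Hence $\Theta\circ\sigma_{E_\sigma}^{-1}\circ\Theta^{-1}$ agrees with $\sigma^{-1}$ on the generators of $A^\sigma$.

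Both maps are algebra automorphisms of $A^\sigma$. For the second, $\sigma^{-1}(a\star b)=\sigma^{-1}(a)\sigma^{p-1}(b)=\sigma^{-1}(a)\star\sigma^{-1}(b)$. Two automorphisms agreeing on generators are equal.

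Combining, we get inclusion-preserving bijections
\[
\mathcal L^{\sigma}_{gr}(L_K(E))\;\longrightarrow\;\mathcal L^{\sigma^{-1}}_{gr}(L_K(E)^\sigma),\qquad I\longmapsto I.
\]
This is therefore a lattice isomorphism. If one prefers to identify $L_K(E)^\sigma$ with $L_K(E_\sigma)$, we transport along the graded isomorphism $\Theta$.
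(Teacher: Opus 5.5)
Your proof is correct. The first isomorphism is handled as in the paper: you restrict the classical correspondence $H\mapsto I(H)$, $I\mapsto I\cap E^0$ to the $\sigma$-fixed parts. The second isomorphism, however, takes a genuinely different route.

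The paper stays on the graph side. Using $r_\sigma=\sigma_0^{-1}\circ r$ (via Lemma~\ref{lem:path-endpoints}), it observes that a $\sigma_0$-invariant set $H$ is hereditary and saturated in $E$ if and only if it is so in $E_\sigma$, so $\mathcal H_E^{\sigma}=\mathcal H_{E_\sigma}^{\sigma^{-1}}$. It then applies the same correspondence to the row-finite graph $E_\sigma$ and transports along $\Theta$ from Theorem~\ref{thm:twist-is-LPA}.

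You instead argue purely algebraically. For any graded automorphism $\sigma$ of a $\mathbb Z$-graded algebra $A$, a $\sigma$-stable graded subspace is an ideal of $A$ if and only if it is an ideal of $A^\sigma$. So your second isomorphism is literally the identity $I\mapsto I$. You never need to compare hereditary saturated subsets of $E$ and $E_\sigma$, and row-finiteness is not used in this step.

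The price is the bookkeeping you identify and carry out. You must check that the automorphism of $L_K(E)^\sigma$ obtained from the graph automorphism $\sigma^{-1}$ of $E_\sigma$ via $\Theta$ coincides with the inverse of the original $\sigma$. Your check on generators ($v$, $e$, and $\sigma_1^{-1}(e)^*\mapsto\sigma_1^{-2}(e)^*$), together with $\sigma^{-1}(a\star b)=\sigma^{-1}(a)\star\sigma^{-1}(b)$, settles this.

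In the paper's route that compatibility is automatic, because $\sigma^{-1}$ on $L_K(E)^\sigma$ is defined by transport along $\Theta$. That route also yields the extra combinatorial fact $\mathcal H_E^{\sigma}=\mathcal H_{E_\sigma}^{\sigma^{-1}}$. Your route gives a statement valid for arbitrary Zhang twists by graded automorphisms. It also removes the ambiguity about which ``$\sigma^{-1}$'' is meant, and makes explicit that the composite lattice isomorphism is the identity on graded ideals.
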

\begin{proof}
By Lemma~\ref{lem:path-endpoints}, there is a
path of length $k$ from $v$ to $w$ in $E_{\sigma}$ if and only if there is a path of length
$k$ from $v$ to $\sigma_0^{\,k}(w)$ in $E$. Consequently a $\sigma_0$-invariant subset
$H\subseteq E^0$ is hereditary (resp.\ saturated) in $E$ if and only if it is hereditary
(resp.\ saturated) in $E_{\sigma}$. Furthermore, if $I\in\mathcal{L}_{gr}^{\sigma}(L_K(E))$, then $H_I:=I\cap E^0$ is a saturated hereditary subset of $E^0$ satisfying $\sigma_0(H_I)=H_I$. Conversely, if $H\in\mathcal{H}_E^{\sigma}$, then the ideal $I(H)$ of $L_K(E)$ generated by $H$ belongs to $\mathcal{L}_{gr}^{\sigma}(L_K(E))$. Thus, using Theorem~\ref{thm:twist-is-LPA} together with \cite[Theorem 2.5.9]{AAS}, the proposition follows immediately.
\end{proof}


The following example shows that simplicity, primeness and the monoid $\mathcal V$ all
fail to be invariant under these twists, and simultaneously illustrates Corollary
\ref{cor:reduction} in a case where the module type nevertheless survives.

\begin{example}\label{ex:R2R2}
Let $E=R_2\sqcup R_2$, 

\[\begin{tikzcd}
	{\bullet v_1} && {\bullet v_2}
	\arrow["{f_1}", from=1-1, to=1-1, loop, in=55, out=125, distance=10mm]
	\arrow["{f_2}", from=1-1, to=1-1, loop, in=100, out=170, distance=10mm]
	\arrow["{g_1}", from=1-3, to=1-3, loop, in=55, out=125, distance=10mm]
	\arrow["{g_2}", from=1-3, to=1-3, loop, in=100, out=170, distance=10mm]
\end{tikzcd}\]

Let $\sigma=(\sigma_0, \sigma_1)$ interchange the two components: $\sigma_0(v_1)=v_2$, $\sigma_0(v_2)=v_1$ and
$\sigma_1(f_i)=g_i$, $\sigma_1(g_i)=f_i$. Then the twisted quiver $E_\sigma$ is as follows: 

\[\begin{tikzcd}
	{\bullet v_1} &&& {\bullet v_2}
	\arrow[from=1-1, to=1-4]
	\arrow[shift left=3, from=1-1, to=1-4]
	\arrow[shift left=3, from=1-4, to=1-1]
	\arrow[shift left=5, from=1-4, to=1-1]
\end{tikzcd}\]

 Consequently,
\begin{enumerate}
\item[(i)] $L_K(E)\cong L_K(1,2)\times L_K(1,2)$ is neither simple nor prime, while
$L_K(E_\sigma)$ is purely infinite simple: $E_\sigma$ is cofinal, every vertex lies on a
cycle, and each cycle has an exit, while neither $\{v_1\}$ nor $\{v_2\}$ is hereditary.
\item[(ii)] $M_E=\{0,v_1,v_2,v_1+v_2\}$ is a semilattice with $v_i=2v_i$, whereas
$M_{E_\sigma}$ is generated by $v_1$ subject to $v_1=2v_2$, $v_2=2v_1$, hence
$v_1=4v_1$ and $M_{E_\sigma}\cong\{0\}\sqcup\mathbb Z/3\mathbb Z$. So
$\mathcal V\bigl(L_K(E)\bigr)\not\cong\mathcal V\bigl(L_K(E_\sigma)\bigr)$.
\item[(iii)] Nonetheless $\mathbf 1=v_1+v_2$ satisfies $2\cdot\mathbf 1=\mathbf 1$ in both
monoids, so both algebras have module type $(1,2)$.
\end{enumerate}
Part (i) provides a Leavitt path algebra witness for the failure of primeness to be
preserved under Zhang twist, as mentioned in the introduction; part (ii) shows that the
combinatorial invariant governing Question \ref{q:IBN} genuinely moves, while (iii) shows
that in this instance it moves without changing the module type.
\end{example}

\begin{remark}\label{rem:graded-simplicity}
Graded simplicity is likewise not preserved. In Example \ref{ex:C2} the only hereditary
saturated subsets of $C_2$ are $\emptyset$ and $C_2^0$, so $L_K(C_2)$ is graded simple,
whereas $(C_2)_\sigma=R_1\sqcup R_1$ has four hereditary saturated subsets and
$L_K(C_2)^{\sigma}\cong K[x,x^{-1}]^2$ has four graded ideals. 
\end{remark}

\begin{remark}\label{rem:summary-table}
We summarize the behavior of the standard invariants of a finite graph $E$ under the Zhang twist 
 $E\rightsquigarrow E_\sigma$.
\[
\begin{array}{ll}
\hline
\text{sinks, regular vertices, out-degrees} & \text{preserved (same source map)}\\
\text{acyclicity, Conditions }(\mathrm L)\ \text{and}\ (\mathrm K) & \text{preserved (Corollary \ref{cor:acyclic})}\\
\text{strong gradedness, von Neumann regularity} & \text{preserved (Proposition \ref{prop:strongly-graded-preserved})}\\
\text{Dedekind finiteness, stable finiteness, exchange property} & \text{preserved (Proposition \ref{prop:strongly-graded-preserved})}\\
\text{UGN property} & \text{preserved (Proposition \ref{prop:UGN})}\\
\text{gr-IBN and graded module type} & \text{preserved (Proposition \ref{prop:gr-IBN})}\\
\text{graded ideal lattice, graded simplicity} & \text{not preserved (Remark \ref{rem:graded-simplicity})}\\
\text{simplicity, primeness, }\mathcal V\bigl(L_K(E)\bigr) & \text{not preserved (Example \ref{ex:R2R2})}\\
\text{IBN and module type} & \text{open (Question \ref{q:IBN})}\\
\hline
\end{array}
\]
\end{remark}

\section{Acknowledgements}
The first author was supported by the Vietnam
Academy of Science and Technology under grant CBCLCA.01/26-28, and by the Vietnam National Foundation for Science and Technology Development (NAFOSTED). Part of this work was done when the second author was giving a course on Leavitt path algebras in the CIMPA School at SASTRA University in India. He would like to thank the organizers Bernhard Keller and U. Arunachalam for inviting him to give the course in CIMPA School.


\end{document}